\theoremstyle{plain}
\newtheorem{theorem}{Theorem}[section]
\newtheorem{main}{Main Theorem}[section]
\newtheorem{corollary}[theorem]{Corollary}
\newtheorem{lemma}[theorem]{Lemma}
\newtheorem{proposition}[theorem]{Proposition}
\newtheorem*{ac}{Acknowledgement}
\newtheorem{notation}[theorem]{Notation}
\newtheorem{remark}[theorem]{Remark}
\newtheorem{definition}[theorem]{Definition}
\numberwithin{equation}{section}
\newcommand{\VeryThinLine}{\draw[line width=0.5pt]}
\newcommand{\ThinLine}{\draw[line width=1pt]}
\newcommand{\FillGrey}{\path[fill=gray!40!white]}
\newcommand{\FillWhite}{\path[fill=white]}
\newcommand{\Ch}{\mathbf{UCP}}
\newcommand{\id}{\text{Id}}
\newcommand{\cM}{\mathcal{M}}
\newcommand{\cN}{\mathcal{N}}
\newcommand{\End}{\text{End}}
\newcommand{\Hom}{\text{Hom}}
\begin{document}
\bibliographystyle{plain}
\title{Relative Entropy for Quantum Channels}
\author{Zishuo Zhao}
\address{Yau Mathematical Science Center and Department of Mathematics, Tsinghua University, Beijing, China}
\email{zzs21@mails.tsinghua.edu.cn}
\date{}
\begin{abstract}
    We introduce an quantum entropy for bimodule quantum channels on finite von Neumann algebras, generalizing the remarkable Pimsner-Popa entropy. 
    The relative entropy for Fourier multipliers of bimodule quantum channels establishes an upper bound of the  quantum entropy. 
    Additionally, we present the Araki relative entropy for bimodule quantum channels, revealing its equivalence to the relative entropy for Fourier multipliers and demonstrating its left/right monotonicities and convexity. 
    Notably, the quantum entropy attains its maximum if there is a downward Jones basic construction. 
    By considering R\'{e}nyi entropy for Fourier multipliers, we find a continuous bridge between the logarithm of the Pimsner-Popa index and the Pimsner-Popa entropy. 
    As a consequence, the R\'{e}nyi entropy at $1/2$ serves a criterion for the existence of a downward Jones basic construction. 
\end{abstract}
\maketitle

\section{Introduction}


Relative entropy, introduced independently by Kullback and Leibler, serves as a measure quantifying the disparity between two probability distributions. 
Umegaki \cite{Ume62} expanded the concept of relative entropy to encompass density matrices within quantum systems. Building on this foundation, Connes and St{\o}mer \cite{CS1975} delved into the study of relative entropy for subalgebras.
In a pivotal contribution, Pimsner and Popa investigated relative entropy for finite von Neumann algebras in their work \cite{PP1986}, coining the term "Pimsner-Popa entropy." Their study established a profound connection, demonstrating that the finiteness of the Jones index hinges on the satisfaction of the Pimsner-Popa inequalities. Notably, they showed that the Jones index is finite if and only if these inequalities hold, equivalently indicating the finiteness of the Pimsner-Popa entropy.

Quantum relative entropy and quantum channels assume pivotal roles in the exploration of quantum information theory. A bimodule quantum channel, denoted as $\Phi:\mathcal{M}\to\mathcal{M}$, is characterized by its preservation of a *-subalgebra $\mathcal{N}$.

In our pursuit of advancing the understanding of bimodule quantum channels, we propose several relative entropies. Drawing inspiration from the foundational work of Connes-St{\o}mer and Pimsner-Popa, we introduce the Pimsner-Popa entropy $H(\Phi|\Psi)$ tailored for bimodule quantum channels $\Phi$ and $\Psi$.

Employing the framework of quantum Fourier analysis (\cite{JLW2016}, \cite{JJLRW2020}, \cite{HLW2022}), we define the relative entropy for bimodule quantum channels as the quantum relative entropy $D(\widehat{\Phi}\|\widehat{\Psi})$ of the Fourier multipliers $\widehat{\Phi}$ and $\widehat{\Psi}$ which determine information of $\Phi, \Psi$ completely.
Our subsequent exploration aims to demonstrate that
 \begin{align*}
 H(\Phi|\Psi)\leq D(\widehat{\Phi}\|\widehat{\Psi}).
 \end{align*}
With insights from the spin model, conventional quantum channels operating on finite quantum systems manifest as bimodule quantum channels. In a surprising turn of events, we have substantiated that when the inclusion facilitates a downward Jones basic construction, it follows that
\begin{align*}
H(\Phi|\Psi)= D(\widehat{\Phi}\|\widehat{\Psi})
\end{align*}

In the realm of infinite quantum systems, Araki conducted a systematic exploration of relative entropy for normal states, extending the notion from finite quantum systems. Drawing inspiration from Araki's pioneering work, we introduce a relative entropy, denoted as $S_\tau(\Phi, \Psi)$, tailored for (bimodule) quantum channels $\Phi$ and $\Psi$. 
Our investigation reveals that this entropy exhibits both left and right monotonicity. 
Consequently, we obtain the convexity.
In contrast to the Pimsner-Popa entropy designed for bimodule quantum channels, we observe that
    \begin{align*}
        H(\Phi|\Psi)\leq S_{\tau}(\Phi,\Psi). 
    \end{align*}

Inspired by the study of the relation between Pimsner-Popa index and R\'{e}nyi entropy for subalgebras in \cite{GJL20}, we consider the R\'{e}nyi entropy between Fourier multipliers. 
We found that when a downward Jones basic construction exists, R\'{e}nyi entropy $S_p(\Phi|\Psi), \ p\in [1, \infty]$ for Fourier multipliers forms a continuous bridge between the logarithm of Pimsner-Popa index $\lambda(\Phi,\Psi) $ and Pimsner-Popa entropy $ H(\Phi|\Psi)$ for bimodule quantum channels, enhancing the result in \cite{GJL20}. 
    \begin{align*}
        -\log \lambda(\Phi,\Psi) \geq S_p(\Phi|\Psi) \geq H(\Phi|\Psi).
    \end{align*}
By comparing R\'{e}nyi entropy at $1/2$ and Pimsner-Popa entropy, we obtain a criterion for the existence of downward basic constructions when $\cM$ is a finite factor.

The paper is organized as follows.
In Section 2, we review finite inclusion of finite von Neumann algebras, completely positive maps and completely positive bimodule maps.
In Section 3, we introduce Pimsner-Popa relative entropy for completely positive maps.
In Section 4, we show the equality between Pimsner-Popa relative entropy and relative entropy for Fourier multiplier when the inclusion admits a downward basic construction.
In Section 5, we study the comparable completely positive maps and their derivatives.
In Section 6, we introduce Araki's relative entropy for comparable completely positive maps and prove its monotoniciy and convexity.
In Section 7, we study the R\'{e}nyi relative entropies for completely positive maps.

\begin{ac}
    The author would like to express his gratitude to Zhengwei Liu and Jinsong Wu for their constant support and encouragement. 
    The author was supported by BMSTC and ACZSP (Grant No. Z221100002722017) and by Beijing Natural Science Foundation Key Program (Grant No. Z220002).
\end{ac}

\section{Preliminaries}

In this section, we review the basic theory of inclusions of finite von Neumann algebras and their completely positive bimodule maps.

\subsection{Jones basic construction}

Let $\cM$ be a finite von Neumann algebra with a faithful normal normalized trace $\tau_{\cM}$, and let $\cN\subset \cM$ be an inclusion. 
We denote the restriction of $\tau_{\cM}$ on $\cN$ as $\tau_{\cN}$, and let $E_{\cN} = E^{\cM}_{\cN}$ be the trace-preserving conditional expectation from $\cM$ onto $\cN$. 
We fix a set of Pimnser-Popa basis $\{\eta_j\}_{j}$ for the right $\cN$-module $L^2(\cM)_{\cN}$. 
That is, $\sum_j \eta_jE_{\cN}(\eta^*_jx) = x$ for all $x\in\cM$. 
Consider the operators $L_{\tau_{\cN}}(\eta_j)\in \Hom(L^2(\cN)_{\cN},L^2(\cM)_{\cN})$ defined as 
\begin{align*}
    L_{\tau_{\cN}}(\eta_j)(y\Omega_{\cN}) = \eta_jy\Omega_{\cM},\quad y\in\cN.
\end{align*}
Then $\{\eta_j\}_{j}$ being a basis implies that 
\begin{align*}
    \sum_j L_{\tau_{\cN}}(\eta_j)L^*_{\tau_{\cN}}(\eta_j) = 1
\end{align*}
as operators on $L^2(\cM)$. 
We say $\cN\subset\cM$ is a \textbf{finite inclusion} if there exists a finite Pimsner-Popa basis. 
In such case, the \textbf{Jones index} of the inclusion $\cN\subset\cM$ is defined to be $\delta^2 = [\cM:\cN]:=\dim_{\cN}L^2(\cM)$ and can be computed as 
\begin{equation}\label{eqn::value of the index}
    \delta^2 = \sum_j\tau_{\cM}(\eta^*_j\eta_j).
\end{equation}

Let $J_{\cM}$ be the modular conjugation on $L^2(\cM)$ associated to $\tau_{\cM}$ and $e_{\cN}$ be the orthogonal projection with range $\overline{\cN\Omega_{\cM}}$. 
The Jones basic construction for $\cN\subset\cM$ is defined as $\cM_1 = J_{\cM}\cN'J_{\cM} = \cM e_{\cN}\cM$, with a canonical trace 
\begin{equation}\label{eqn::canonical trace on M1}
    \tau_{\cM_1} (z) = \delta^{-2}\sum_j\langle z(\eta_j\Omega_{\cM}),\eta_j\Omega_{\cM}\rangle,\quad z\in \cM_1,
\end{equation}
and the canonical trace on $\cN' = J_{\cM}\cM_1J_{\cM}$ is defined as 
\begin{equation}\label{eqn::canonical trace on N'}
    \tau_{\cN'}(z') = \tau_{\cM_1}(J_{\cM}z'J_{\cM}),\quad z'\in\cN'.
\end{equation}
Both traces are independent of the choice of the basis. 
Explicitly, we have 
\begin{align}\label{eqn:: formula for tau_M1}
    \tau_{\cM_1}(xe_{\cN}y) = \delta^{-2}\tau_{\cM}(xy),\quad  x,y\in\cM.
\end{align}

In general, we won't have $\tau_{\cM_1}|_{\cM} = \tau_{\cM}$. 
Let $h_{\cM_1,\cM}$ be the unique positive operator in the center $Z(\cM)$ of $\cM$ such that
\begin{equation}\label{eqn:: the derivative between tau_M1 and tau_M}
    \tau_{\cM}(h_{\cM_1,\cM}x) = \tau_{\cM_1}(x),\quad x\in\cM.
\end{equation} 
We note that by Equation \eqref{eqn:: formula for tau_M1}: 
\begin{equation}\label{eqn:: expectation on e_N in M}
    E^{\cM_1}_{\cM}(e_{\cN}) =  \delta^{-2}h^{-1}_{\cM_1,\cM}.
\end{equation} 

Now we perform Jones basic construction for the inclusion $\cM\subset\cM_1$, obtaining $\cM_2 = J_{\cM_1}\cM'J_{\cM_1} = \cM_1e_{\cM}\cM_1$, where $e_{\cM}$ is the orthogonal projection on $L^2(\cM_1)$ onto $L^2(\cM)$. 
We have that 
\begin{align*}
    \sum_j L_{\tau_{\cM}}(\delta\eta_je_{\cN})L^*_{\tau_{\cM}}(\delta\eta_je_{\cN}) = 1,
\end{align*}
where $L_{\tau_{\cM}}(\delta\eta_je_{\cN})\in \Hom(L^2(\cM)_{\cM},L^2(\cM_1)_{\cM})$ is defined as $L_{\tau_{\cM}}(\delta\eta_je_{\cN})(x\Omega_{\cM}) = \delta\eta_je_{\cN}x\Omega_{\cM_1}$ for all $x\in \cM$. 
Therefore we can define the canonical trace $\tau_{\cM_2}$ as 
\begin{equation}\label{eqn::canonical trace on M2}
    \tau_{\cM_2}(z) = \sum_j \langle z(\eta_je_{\cN}\Omega_{\cM_1}),\eta_je_{\cN}\Omega_{\cM_1}\rangle,\quad z\in \cM_2.
\end{equation}
We check that $\tau_{\cM_2}$ agrees with $\tau_{\cM_1}$ on $\cM_1$. 
In fact, for all $x,y\in\cM$:
\begin{align*}
    \tau_{\cM_2}(xe_{\cN}y) &= \sum_j \langle xe_{\cN}y\eta_je_{\cN}\Omega_{\cM_1},\eta_je_{\cN}\Omega_{\cM_1}\rangle\\
    &= \sum_j\tau_{\cM_1}(xE_{\cN}(y\eta_j)e_{\cN}\eta^*_j)\\
    &= \delta^{-2}\sum_j\tau_{\cM}(xE_{\cN}(y\eta_j)\eta^*_j)\\
    &= \delta^{-2}\tau_{\cM}(xy) = \tau_{\cM_1}(xe_{\cN}y).
\end{align*}
In addition, we have that 
\begin{align*}
    \tau_{\cM_2}(xe_{\cN}ye_{\cM}) &= \delta^{-2}\tau_{\cM_2}(xh^{-1}_{\cM_1,\cM}ye_{\cM})\\
    &= \delta^{-2}\sum_j \langle xyh^{-1}_{\cM_1,\cM}e_{\cM}(\eta_je_{\cN}\Omega_{\cM_1}),\eta_je_{\cN}\Omega_{\cM_1}\rangle\\
    &= \delta^{-4}\sum_j \langle xyh^{-1}_{\cM_1,\cM}\eta_jh^{-1}_{\cM_1,\cM}\Omega_{\cM_1},\eta_je_{\cN}\Omega_{\cM_1}\rangle\\
    &= \delta^{-4}\tau_{\cM_1}(xyh^2_{\cM,\cM_1})\\
    &= \delta^{-2}\tau_{\cM_1}(xe_{\cN}yh^{-1}_{\cM_1,\cM}),
\end{align*}
so we obtain that
\begin{equation}\label{eqn:: expectation of e_M}
    E^{\cM_2}_{\cM_1}(e_{\cM}) = \delta^{-2}h^{-1}_{\cM_1,\cM}.
\end{equation}
\subsection{Bimodules}
The point of view of bimoudle theory is indispensable for our analysis. 
Our reference for basic bimodule theory for finite von Neumann algebras is \cite{Popa2010}.

We view $L^2(\cM)$ together with the left action of $\cM$ and the right action of $\cN$ as an $\cM$-$\cN$ bimodule and denote it as $_{\cM}L^2(\cM)_{\cN}$. 
Then we identify $\cM_1 = \End(L^2(\cM)_{\cN})$, and $\cM'\cap\cM_1 = \End(_{\cM}L^2(\cM)_{\cN})$. 
Similarly with the left action of $\cN$ and the right action of $\cM$, $\End(_{\cN}L^2(\cM)_{\cM})$ is identified with $\cN'\cap\cM$. 
Notice that we have an anti-isomorphism between $\cN'\cap\cM$ and $\cM'\cap\cM_1$ given by modular conjugation on $L^2(\cM)$. 

Now we consider the bimodule $_{\cM}L^2(\cM_1)_{\cM}$. 
It well known \cite{Bisch1997} that $_{\cM}L^2(\cM_1)_{\cM}$ is unitarily equivalent to $_{\cM}L^2(\cM)\otimes_{\cN} L^2(\cM)_{\cM}$, with the equivalence given by:
\begin{align*}
    \delta xe_{\cN}y\Omega_{\cM_1}\mapsto x\Omega_{\cM}\otimes_{\cN} y\Omega_{\cM},\quad x,y\in \cM.
\end{align*}
Because of this equivalence, we will identify $\cM_2$ as $\End(L^2(\cM)\otimes_{\cN} L^2(\cM)_{\cM})$. 
It follows that $\End(_{\cM}L^2(\cM)\otimes_{\cN} L^2(\cM)_{\cM})$ is identified with $\cM'\cap \cM_2$. 

By the isomorphism above, the standard left action of $\cM_1$ on $L^2(\cM_1)$ translates into the action on $L^2(\cM)\otimes_{\cN} L^2(\cM)$ as 
\begin{align*}
    xe_{\cN} y(x_0\Omega_{\cM}\otimes_{\cN}y_0\Omega_{\cM}) = xE_{\cN}(yx_0)\Omega_{\cM}\otimes_{\cN}y_0\Omega_{\cM},\quad x,x_0,y,y_0\in \cM.
\end{align*}
That is, $\cM_1$ acts on the first component. 
Thus the inclusion $\cM_1\hookrightarrow \cM_2$ corresponds to the inclusion
\begin{align*}
    \End(L^2(\cM)_{\cN})\ni z\mapsto z\otimes_{\cN}1\in \End(L^2(\cM)\otimes_{\cN} L^2(\cM)_{\cM}).
\end{align*}
The coincidence of $\tau_{\cM_2}$ and $\tau_{\cM_1}$ on $\cM_1$ can then be expressed as 
\begin{equation}\label{eqn:: trace on M2 agrees wit trace on M1}
    \tau_{\cM_2}(z\otimes_{\cN}1) = \tau_{\cM_1}(z),\quad z\in \cM_1.
\end{equation}
\subsection{Completely positive maps}
Suppose $\mathcal{A}$ and $\mathcal{B}$ are von Neumann algebras and  $\Phi:\mathcal{A}\rightarrow \mathcal{B}$ is a linear map.
The map $\Phi$ is called \textbf{positive} if $\Phi(\mathcal{A}_+)\subseteq\mathcal{B}_+$.
The map $\Phi$ is \textbf{completely positive} if 
\begin{align*}
&\Phi\otimes \id_n:\mathcal{A}\otimes M_n(\mathbb{C})\rightarrow \mathcal{B}\otimes M_n(\mathbb{C})
\end{align*}
is positive for all positive integer $n$, where $(\Phi\otimes id_n)(x_{ij})^n_{i,j=1}=(\Phi(x_{ij}))^n_{i,j=1}$, $x_{ij}\in\mathcal{A}$. The map $\Phi$ is called \textbf{unital} if $\Phi(1_{\mathcal{A}})=1_{\mathcal{B}}$; \textbf{faithful} if $\Phi(x^*x)\neq 0$ whenever $x\neq 0$. 
The map $\Phi$ is called \textbf{normal} if it is continuous with respect to the ultraweak topology of $\mathcal{A}$ and $\mathcal{B}$.
By a \textbf{quantum channel} we mean a normal unital completely positive map, and we denote the set of quantum channels from $\mathcal{A}$ to $\mathcal{B}$ as $\Ch(\mathcal{A},\mathcal{B})$. 
Note that $\Ch(\mathcal{A},\mathcal{B})$ is a convex set.


Now we briefly recall the concept of correspondence (bimodule) of a completely positive map which was introduced by Connes in \cite{Connes1994} and developed further in the context of II$_1$ factors in \cite{Popa1986}. 
Let $\phi$ be a normal faithful state on $\mathcal{B}$, and $(L^2(\mathcal{B},\phi),\pi_{\phi},\Omega_{\phi})$ be the GNS-construction. 
The sesquilinear form $\left\langle \cdot, \cdot \right\rangle_0$ on $\mathcal{A}\otimes L^2(\mathcal{B}, \phi)$ is defined as
\begin{align*}
\left\langle a_1\otimes \xi_1, a_2\otimes \xi_2\right\rangle_0=\left\langle \pi_{\phi}(\Phi(a_2^*a_1))\xi_1, \xi_2\right\rangle,
\end{align*}
whenever $a_1, a_2\in \mathcal{A}$ and $\xi_1,\xi_2\in L^2(\mathcal{B}, \phi)$.
The kernel $\mathcal{K}_{\Phi}$ of the sesquilinear form $\left\langle \cdot, \cdot \right\rangle_0$ is given by
$$\mathcal{K}_{\Phi}=\{x\in\mathcal{A}\otimes L^2(\mathcal{B},\phi): \left\langle x,y\right\rangle_0=0,\forall y\in \mathcal{A}\otimes L^2(\mathcal{B},\phi)\},$$
which is invariant under the left action of $\mathcal{A}$ and the right action of $\mathcal{B}$.
The sesquilinear form $\left\langle \cdot,\cdot \right\rangle_0$ then induces an inner product $\left\langle\cdot,\cdot\right\rangle_{\Phi}$ on the quotient $\mathcal{A}\otimes L^2(\mathcal{B}, \phi)/\mathcal{K}_{\Phi}$, and we denote by $\mathcal{H}^{\Phi}$ the Hilbert space by completing the vector space $\mathcal{A}\otimes L^2(\mathcal{B}, \phi)/\mathcal{K}_{\Phi}$ with respect to $\left\langle\cdot,\cdot\right\rangle_{\Phi}$. 
We denote the quotient map from $\mathcal{A}\otimes L^2(\mathcal{B},\phi)$ to $\mathcal{H}^{\Phi}$ as $[\cdot]_{\Phi}$.
The left action of $\mathcal{A}$ on $\mathcal{H}^{\Phi}$ is denoted as $\pi_{\Phi}$, i.e.
\begin{align*}
\pi_{\Phi}(a)[a_0\otimes\xi]_{\Phi}=[aa_0\otimes \xi]_{\Phi},
\end{align*}
where $a, a_0\in \mathcal{A}$, $\xi\in L^2(\mathcal{B}, \phi)$.
Similarly, the right action of $\mathcal{B}$ on $\mathcal{H}^{\Phi}$ is denoted as $\pi'_{\Phi}$, i.e.
\begin{align*}
\pi'_{\Phi}(b)[a_0\otimes\xi]_{\Phi}=[a_0\otimes \xi b]_{\Phi},
\end{align*}
where $a_0\in\mathcal{A}$ and $b\in \mathcal{B}$.
The intertwiner of right $\mathcal{B}$-modules $v_{\Phi,\phi}:L^2(\mathcal{B},\phi)\rightarrow  \mathcal{H}^{\Phi}$ is defined by 
\begin{align*}
v_{\Phi,\phi}\Omega_{\phi}=[1_{\mathcal{A}}\otimes \Omega_{\phi}]_{\Phi}.
\end{align*}
We have $v_{\Phi}$ is an isometry if and only if $\Phi$ is unital. 
Moreover,
\begin{align}\label{dilation}
    \Phi(\cdot)=v^*_{\Phi,\phi}\pi_{\Phi}(\cdot)v_{\Phi,\phi}.
\end{align}
The triple $(\mathcal{H}^{\Phi},\pi_{\Phi},v_{\Phi,\phi})$ is called a dilation of $\Phi$.

It is clear that $\mathcal{H}^\Phi$ equipped with the actions of $\mathcal{A}$ and $\mathcal{B}$ is an $\mathcal{A}$-$\mathcal{B}$-bimodule, denoted by $_{\mathcal{A}}\mathcal{H}^\Phi_{\mathcal{B}}$. 
It can be shown that the unitary equivalence class of $_{\mathcal{A}}\mathcal{H}^\Phi_{\mathcal{B}}$ is independent of the $\phi$. 
We denote $[1\otimes \Omega_{\phi}]_{\Phi}$ as $\Omega_{\Phi,\phi}$. 
Then $\Omega_{\Phi,\phi}$ is separating for $\End(_{\mathcal{A}}\mathcal{H}^\Phi_{\mathcal{B}})$. 
We remark that $\Omega_{\Phi,\phi}$ does depend on the choice of $\phi$.

\subsection{Completely positive bimodule maps}
Suppose $\mathcal{A}\subset \mathcal{B}$ is an inclusion of von Neumann algebras and $\Phi$ is a normal completely positive map from $\mathcal{B}$ to $\mathcal{B}$.
We say $\Phi$ is a $\mathcal{A}$-$\mathcal{A}$-bimodule map if 
\begin{align*}
\Phi(a_1ba_2)=a_1 \Phi(b) a_2,\quad  a_1,a_2\in\mathcal{A}\text{, and }b\in\mathcal{B}.
\end{align*}
We denoted by $\mathbf{CP}_{\mathcal{A}}(\mathcal{B})$ ($\mathbf{CB}_{\mathcal{A}}(\mathcal{B})$) the set of all completely positive (bounded) $\mathcal{A}$-bimodule maps from $\mathcal{B}$ to $\mathcal{B}$.

Let $\cN\subset\cM$ be a finite inclusion of finite von Neumann algebras.  
Then $E_{\cN}$ is a completely positive $\cN$-bimodule map. 
Moreover, the bimodule $\mathcal{H}^{E_{\cN}}$ can be naturally identified with $_{\cM}L^2(\cM)\otimes_{\cN}L^2(\cM)_{\cM}$. 
Here we recall the following proposition from \cite{Popa1986}: 
\begin{proposition}\label{prop::bimodule of a condiitonal expectation}
    The map $\iota: \mathcal{H}^{E_{\cN}}\to L^2(\cM)\otimes_{\cN}L^2(\cM)$, 
    \begin{align*}
        \iota (x\Omega_{E_{\cN}}y)=x\Omega_{\cM}\otimes_{\cN}y\Omega_{\cM},\quad x,y\in \cM,
    \end{align*}
    extends to an isometric $\cM$-$\cM$ bimodule isomorphism. 
    Moreover $\iota v_{\cN}(x\Omega)=\Omega\otimes_{\cN}x\Omega$ for all $x\in\cM$.
\end{proposition}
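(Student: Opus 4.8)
The plan is to reduce everything to a single explicit inner-product computation, treating the generic vector $x\Omega_{E_\cN}y$ as the class $[x\otimes y\Omega_\cM]_{E_\cN}$ and comparing it with the elementary tensor $x\Omega_\cM\otimes_\cN y\Omega_\cM$. First I would unwind the definitions: since $\Omega_{E_\cN}=[1\otimes\Omega_\cM]_{E_\cN}$ and $\tau_\cM$ is a trace (so $\Omega_\cM y=y\Omega_\cM$ in $L^2(\cM)$), the left and right actions give $x\Omega_{E_\cN}y=\pi_{E_\cN}(x)\pi'_{E_\cN}(y)\Omega_{E_\cN}=[x\otimes y\Omega_\cM]_{E_\cN}$. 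Vectors of this form are total in $\mathcal{H}^{E_\cN}$, and likewise the simple tensors $x\Omega_\cM\otimes_\cN y\Omega_\cM$ are total in $L^2(\cM)\otimes_\cN L^2(\cM)$, so it suffices to define $\iota$ on these generators and verify that it preserves inner products.

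The main step is that inner-product matching. On the domain side, the defining sesquilinear form for $E_\cN$ yields
\begin{align*}
\langle x_1\Omega_{E_\cN}y_1,\,x_2\Omega_{E_\cN}y_2\rangle_{E_\cN}
=\langle E_\cN(x_2^*x_1)y_1\Omega_\cM,\,y_2\Omega_\cM\rangle
=\tau_\cM\!\big(y_2^*E_\cN(x_2^*x_1)y_1\big).
\end{align*}
On the target side, the $\cN$-valued inner product on the right $\cN$-module $L^2(\cM)_\cN$ is $\langle x_2\Omega_\cM,x_1\Omega_\cM\rangle_\cN=E_\cN(x_2^*x_1)$, so the definition of the relative tensor product gives
\begin{align*}
\langle x_1\Omega_\cM\otimes_\cN y_1\Omega_\cM,\,x_2\Omega_\cM\otimes_\cN y_2\Omega_\cM\rangle
=\langle E_\cN(x_2^*x_1)y_1\Omega_\cM,\,y_2\Omega_\cM\rangle,
\end{align*}
which is the same expression $\tau_\cM(y_2^*E_\cN(x_2^*x_1)y_1)$. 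Hence $\iota$ is well defined and isometric on generators, extends to an isometry of the completions, and is surjective because its range is dense; this produces the asserted isometric isomorphism of Hilbert spaces.

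Next I would check that $\iota$ intertwines the bimodule structures. The left action sends $x\Omega_{E_\cN}y\mapsto(ax)\Omega_{E_\cN}y$, which $\iota$ carries to $(ax)\Omega_\cM\otimes_\cN y\Omega_\cM=a\cdot(x\Omega_\cM\otimes_\cN y\Omega_\cM)$, and symmetrically the right action lands on the second leg; thus $\iota$ is an $\cM$-$\cM$ bimodule map. For the ``moreover'' clause, since $v_\cN=v_{E_\cN,\tau_\cM}$ is a right $\cM$-module intertwiner and $\Omega_\cM x=x\Omega_\cM$, we get $v_\cN(x\Omega_\cM)=v_\cN(\Omega_\cM)x=\Omega_{E_\cN}x=[1\otimes x\Omega_\cM]_{E_\cN}$, and applying $\iota$ gives $\Omega_\cM\otimes_\cN x\Omega_\cM$, as claimed.

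The main obstacle is essentially bookkeeping: pinning down the convention for the relative tensor product $L^2(\cM)\otimes_\cN L^2(\cM)$ and its $\cN$-valued inner product so that it aligns with the Connes-correspondence form attached to $E_\cN$. Once both inner products are identified with the common expression $\tau_\cM(y_2^*E_\cN(x_2^*x_1)y_1)$, isometry, the bimodule property, and the formula for $v_\cN$ all follow at once. The finiteness hypothesis enters only through traciality of $\tau_\cM$, which is what lets us identify $\Omega_\cM x$ with $x\Omega_\cM$ and thereby keep the description of $x\Omega_{E_\cN}y$ and of $\iota v_\cN$ clean.
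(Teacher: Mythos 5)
Your proposal is correct and follows essentially the same route as the paper's proof: match the defining sesquilinear form of $\mathcal{H}^{E_{\cN}}$ with the inner product of the relative tensor product via the common expression $\tau_{\cM}(y_2^*E_{\cN}(x_2^*x_1)y_1)$, extend by density to a surjective isometry, verify the bimodule property on generators, and read off $\iota v_{\cN}$. The only cosmetic difference is that the paper invokes cyclicity of $\Omega_{E_{\cN}}$ for the bimodule property where you check it directly on generators.
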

\begin{proof}
    For $x_1,y_1,x_2,y_2\in \cM$, 
    \begin{align*}
        \left\langle x_1\Omega_{E_{\mathcal{N}}}y_1, x_2\Omega_{E_{\mathcal{N}}}y_2\right\rangle 
         &=\tau_{\mathcal{M}}(y^*_2E_{\mathcal{N}}(x^*_2x_1)y_2)\\
         &=\left\langle x_1\Omega\otimes_{\mathcal{N}}y_2\Omega, x_2\Omega\otimes_{\mathcal{N}}y_2\Omega \right\rangle\\
         &=\left\langle\iota(x_1\Omega_{E_{\mathcal{N}}}y_1),\iota(x_2\Omega_{E_{\mathcal{N}}}y_2),\right\rangle.
    \end{align*}
    It follows that $\iota$ is well-defined and extends linearly to an surjective isometry. 
    By definition we have $\iota(x\Omega_{E_{\cN}}y)=x\iota(\Omega_{E_{\cN}})y$. 
    Since $\Omega_{E_{\cN}}$ is cyclic for $\cM$-$\cM$ action, $\iota$ is a bimodule isomorphism. 
    Finally for each $x\in\cM$, $\iota v_{\cN}(x\Omega)=\iota(\Omega_{E_{\cN}}x)=\Omega\otimes_{\cN}x\Omega$ as claimed.
\end{proof}
\begin{remark}
    When concerning the trace-preserving conditional expectation $E_{\cN}$ for a finite inclusion $\cN\subset\cM$ of finite von Neumann algebras, we will identify $_{\cM}\mathcal{H}^{E_{\cN}}_{\cM}$ and $_{\cM}L^2(\cM)\otimes_{\cN}L^2(\cM)_{\cM}$ via the map $\iota$ and shorten our notation of $v_{E_{\cN}}$ to $v_{\cN}$. 
    By Section 2.3 we will also identify the above two with $_{\cM}L^2(\cM_1)_{\cM}$. 
\end{remark}

Now we discuss the meaning of Proposition \ref{prop::bimodule of a condiitonal expectation} when $\cN\subset\cM$ is a II$_1$ subfactor with finite index. 
Let $\{\mathscr{P}_{n,\pm}\}_{n\geq 0}$ be its planar algbera. 
Given $\Phi\in \mathbf{CB}_{\cN}(\cM)$, then it defines a $\cN$-$\cN$ bimodule map $V_{\Phi,\tau_{\cM}}$ on $L^2(\cM)$ as the closure of the densely defined map $x\Omega_{\cM}\mapsto \Phi(x)\Omega_{\cM}$. 
We represent $V_{\Phi,\tau_{\cM}}$ by the two-box
$\vcenter{\hbox{
      \begin{tikzpicture}
        \begin{scope}[scale=0.5]
        \FillGrey (0.3,-0.5) -- (1,-0.5)
        -- (1,1.5) -- (0.3,1.5);
        \VeryThinLine (0.3,-0.5) -- (0.3,1.5);
        \VeryThinLine (1,-0.5) -- (1,1.5);
        \draw[fill=white] (0,0) rectangle (1.3,1);
        \node at (0.65,0.5) {$\Phi$};
        \end{scope}
      \end{tikzpicture}
    }}\in \mathscr{P}_{2,+}$. According to \cite{HJLW23}, see also Theorem \ref{thm:: Fourier multiplier is derivative}, its inverse Fourier transform
    \begin{align}\label{pciture::Fourier multiplier}
        \vcenter{\hbox{\begin{tikzpicture}
            \begin{scope}[scale=0.7]
            \FillGrey (-0.3,-0.5) -- (0.3,-0.5)
            -- (0.3,1.5) -- (-0.3,1.5);
            \FillGrey (1,-0.5) -- (1.6,-0.5)
            -- (1.6,1.5) -- (1,1.5);
            \VeryThinLine (0.3,-0.5) -- (0.3,1.5);
            \VeryThinLine (1,-0.5) -- (1,1.5);
            \draw[fill=white] (0,0) rectangle (1.3,1);
            \node[scale = 0.65] at (0.65,0.5) {$\mathcal{F}^{-1}(\Phi)$};
            \end{scope}
          \end{tikzpicture}}}
        :=\vcenter{\hbox{\begin{tikzpicture}[scale=0.7]
            \begin{scope}
                \FillGrey (-0.6,-0.5) -- (1.9,-0.5)
            -- (1.9,1.5) -- (-0.6,1.5);
            \ThinLine (0.3,-0.5)--(0.3, 1) ..
            controls +(0,0.4) and +(0,0.4) .. (-0.3,1)
            -- (-0.3,-0.5);
            \FillWhite (0.3,-0.5)--(0.3, 1) ..
            controls +(0,0.4) and +(0,0.4) .. (-0.3,1)
            -- (-0.3,-0.5)--(0.3,-0.5);
            \ThinLine (1,1.5)--(1,0) ..
            controls +(0,-0.4) and +(0,-0.4) .. (1.6,0)
            -- (1.6,1.5);
            \FillWhite (1,1.5)--(1,0) ..
            controls +(0,-0.4) and +(0,-0.4) .. (1.6,0)
            -- (1.6,1.5)-- (1,1.5);
            \draw[fill=white] (0,0) rectangle (1.3,1);
            \node at (0.65,0.5) {$\Phi$};
            \end{scope}
        \end{tikzpicture}}}
    \end{align} 
    is the unique operator in $\mathscr{P}_{2,-} = \End(_{\cM}\cM\otimes_{\cN}\cM_{\cM})$ such that
    \begin{align}\label{picture::action of two-box on M}
        \Phi(x)=\vcenter{\hbox{\begin{tikzpicture}
                \begin{scope}[scale=0.6]
            \FillWhite (-1.3,1.5) -- (-1.3, -0.5) -- (1.3, -0.5) -- (1.3, 1.5) -- (-1.3,1.5);
            \ThinLine (1.3, -0.5) -- (1.3, 1.5);
            \ThinLine (0.6,1) .. controls +(0,0.4) and +(0,0.4) .. (-0.6,1);
            \ThinLine (0.6,0) .. controls +(0,-0.4) and +(0,-0.4) .. (-0.6,0);
            \draw[line width=2pt] (-1.3,-0.5) -- (-1.3,1.5);
            \FillGrey (0.6,1) .. controls +(0,0.4) and +(0,0.4) .. (-0.6,1)
            -- (-0.6,0) -- (-0.6,0) .. controls +(0,-0.4) and +(0,-0.4) .. (0.6,0)
            -- (0.6,1);
            \FillGrey (1.3,-0.5) -- (1.9,-0.5) -- (1.9,1.5) -- (1.3,1.5);
            \draw[fill=white] (0.3,0) rectangle (1.6,1);
            \node[scale=0.55] at (0.95,0.5) {$\mathcal{F}^{-1}(\Phi)$};
            \draw[fill=white, rounded corners] (-1.6,0) rectangle (-0.3,1);
            \node at (-0.95,0.5) {$x$};
        \end{scope}
        \end{tikzpicture}}}, \quad \forall x\in \mathcal{M}.
    \end{align}
    Note that we have used boxes with rounded corners to indicate elements in $\cM$, and strictly speaking the diagram can NOT be understood in $\mathscr{P}^{\cN\subset\cM}$. 
    The operator $\widehat{\Phi}:=\mathcal{F}^{-1}(\Phi)$ is called the Fourier multiplier of $\Phi$. 
    Equivalently we have, see for instance \cite{HJJLW23}, 
    \begin{align}\label{eqn:: multipler defined using a basis}
        \widehat{\Phi}\left(x\Omega_{\cM}\otimes_{\cN}y\Omega_{\cM}\right)=\delta^{-1}\sum^n_{j=1}x\eta_j\Omega_{\cM}\otimes_{\cN}\Phi(\eta^*_j)y\Omega_{\cM},\quad x,y\in\cM.
    \end{align}
    When $\widehat{\Phi}$ is positive, the two-box $\Phi$ is called $\mathfrak{F}$-positive in \cite{HJLW23} (c.f. Definition 2.6).
    It is known that $\widehat{\Phi}$ is positive iff $\Phi$ is completely positive, 
    and the set $\{\mathcal{F}(\Phi)| \Phi\in \mathbf{CP}_{\cN}(\cM)$ coincides with the positive cone of $\mathscr{P}_{2,-}$. 
    By Proposition \ref{prop::bimodule of a condiitonal expectation}, $\End(_{\cM}\mathcal{H}^{E_{\cN}}_{\cM})$ is naturally isomorphic to $\mathscr{P}_{2,-}$. 
    
\section{Pimsner-Popa Entropy for Completely Positive Maps}

In this section, we revisit the Pimsner-Popa entropy for subfactors and broaden extend it to a relative entropy applicable to completely positive bimodule maps.

Suppose $\mathcal{R}$ is finite von Neumann algebra with a normal faithful trace $\tau$, and let $\cM, \cN$ be von Neumann subalgebras of $\mathcal{R}$. 
The Connes-St\o rmer relative entropy \cite{CS1975} is defined as follows:
    \begin{align}\label{def::Entropy for a subfactor}
        H(\cM|\cN)=\sup_{\mathbf{x}\in \mathbf{S}}\sum_i\tau\left(\eta(E_{\cN}(x_i))\right)-\tau \left(\eta(E_{\cM}(x_i))\right),
    \end{align}
    where $\eta(t)=-t\log t$ defined for $t\geq 0$, $E_{\cN}$, $E_{\cM}$ are trace-preserving conditional expectations from $\mathcal{R}$ onto $\cN, \cM$ and the set $\mathbf{S}$ consists of all finite partitions of unity in $\mathcal{R}$, which are finite subsets $\mathbf{x}=\{x_i\}_i \subset \mathcal{R}_+$ such that $\displaystyle \sum_ix_i=1$. 
When $\cM=\mathcal{R}$, and $\cN\subset \cM$ is a II$_1$ subfactor of finite index, Popa and Pimsner \cite{PP1986} showed a nice formula as follows:
    \begin{equation}\label{eqn::formula for H(M|N)}
        H(\cM|\cN)=2\log\delta-\sum_k\tau_{\cM}(f_k)\log\left(\frac{\tau_{\cM}(f_k)}{\tau_{\cN'}(f_k)}\right),
    \end{equation}
    where $\{f_k\}_k$ is a set of atoms in $\cN'\cap \cM$ such that $\displaystyle \sum_k f_k=1$.

Now we propose a generalization of $H(\cM|\cN)$ based on the following observation. 
Using the bimodule property of $E_{\cN}$, we can rewrite the summands in Definition \ref{def::Entropy for a subfactor} as
\begin{align*}
    &\tau\left(\eta(E_{\cN}(x_i))\right)-\tau \left(\eta(E_{\cM}(x_i))\right)\\
    &=-\tau_{\cM}(E_{\cN}(x_i)\log E_{\cN}(x_i))+ \tau_{\cM}(x_i\log x_i)\\
        &=\tau_{\cM}(x_i\log x_i)-\tau_{\cM}(x_i\log E_{\cN}(x_i)).
\end{align*}
In a von Neumman algebra $\mathcal{R}$ with faithful normal tracial state $\tau$, for two positive operators $\rho,\sigma$ the quantity $\tau(\rho\log \rho - \rho\log \sigma)=D_{\tau}(\rho\|\sigma)$ is called the relative entropy between $\rho$ and $\sigma$. 
Notice that in order for $D_{\tau}(\rho\|\sigma)<\infty$, we require $p_{\rho}\leq p_{\sigma}$, where $p_{\rho}$ and $p_{\sigma}$ are range projections of $\rho$ and $\sigma$. We have thus proved the following lemma:
\begin{lemma}\label{lemma::4}
Suppose $\cN\subset \cM$ is an finite inclusion of finite von Neumann algebras. Then 
    \begin{align}\label{expr::2}
        H(\cM|\cN)=\sup_{\mathbf{x}\in \mathbf{S}}\sum_i D_{\tau_{\cM}}(x_i\|E_{\cN}(x_i)),
    \end{align}
where $\mathbf{S}$ is the set of all finite partitions of unity in $\cM$.
\end{lemma}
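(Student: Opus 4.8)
The plan is to start from the defining formula \eqref{def::Entropy for a subfactor} for $H(\cM|\cN)$ specialized to the ambient algebra $\mathcal{R}=\cM$, and to convert each summand into a relative entropy by a direct computation with the trace-preserving conditional expectation $E_{\cN}$. Since $\mathcal{R}=\cM$, the conditional expectation $E_{\cM}$ is the identity on $\cM$, so $E_{\cM}(x_i)=x_i$ and the second summand is $\tau(\eta(E_{\cM}(x_i)))=-\tau_{\cM}(x_i\log x_i)$, while the first summand is $\tau(\eta(E_{\cN}(x_i)))=-\tau_{\cM}(E_{\cN}(x_i)\log E_{\cN}(x_i))$.

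The key step is to rewrite $\tau_{\cM}(E_{\cN}(x_i)\log E_{\cN}(x_i))$ as $\tau_{\cM}(x_i\log E_{\cN}(x_i))$. First I would observe that since $E_{\cN}(x_i)\in\cN_+$, the operator $\log E_{\cN}(x_i)$ lies in $\cN$ by functional calculus. Then, using that $E_{\cN}$ is trace-preserving ($\tau_{\cM}\circ E_{\cN}=\tau_{\cM}$) and is an $\cN$-bimodule map, I would compute
\[
\tau_{\cM}(x_i\log E_{\cN}(x_i))=\tau_{\cM}\big(E_{\cN}(x_i\log E_{\cN}(x_i))\big)=\tau_{\cM}\big(E_{\cN}(x_i)\log E_{\cN}(x_i)\big),
\]
where the second equality pulls the factor $\log E_{\cN}(x_i)\in\cN$ out of the expectation. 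Substituting back yields
\[
\tau(\eta(E_{\cN}(x_i)))-\tau(\eta(E_{\cM}(x_i)))=\tau_{\cM}(x_i\log x_i)-\tau_{\cM}(x_i\log E_{\cN}(x_i))=D_{\tau_{\cM}}(x_i\|E_{\cN}(x_i)),
\]
and summing over $i$ and taking the supremum over $\mathbf{x}\in\mathbf{S}$ gives the claimed identity \eqref{expr::2}.

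The only point requiring care, and what I would treat as the main (if mild) obstacle, is the well-definedness of the logarithms when the positive operators $x_i$ and $E_{\cN}(x_i)$ fail to be invertible, together with the attendant finiteness of each relative entropy term. The relevant fact is that the range projection of $x_i$ is dominated by that of $E_{\cN}(x_i)$: writing $q=p_{E_{\cN}(x_i)}\in\cN$, the bimodule property gives $E_{\cN}((1-q)x_i(1-q))=(1-q)E_{\cN}(x_i)(1-q)=0$, and faithfulness of $\tau_{\cM}$ then forces $(1-q)x_i(1-q)=0$, hence $x_i(1-q)=0$ and $p_{x_i}\leq p_{E_{\cN}(x_i)}$. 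This guarantees $D_{\tau_{\cM}}(x_i\|E_{\cN}(x_i))<\infty$ for each term and legitimizes the manipulations above with the standard convention $\eta(0)=0$; no analytic subtlety beyond this is needed, so the lemma follows.
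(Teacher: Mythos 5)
Your proof is correct and follows essentially the same route as the paper: specialize the Connes--St{\o}rmer definition to $\mathcal{R}=\cM$ and use the $\cN$-bimodule property together with trace preservation to replace $\tau_{\cM}(E_{\cN}(x_i)\log E_{\cN}(x_i))$ by $\tau_{\cM}(x_i\log E_{\cN}(x_i))$, which identifies each summand with $D_{\tau_{\cM}}(x_i\|E_{\cN}(x_i))$. Your added verification that $p_{x_i}\leq p_{E_{\cN}(x_i)}$ via faithfulness of $\tau_{\cM}\circ E_{\cN}$ is a nice touch that the paper only handles afterwards (and only for finite-index subfactors, via the Pimsner--Popa inequality), but it does not change the substance of the argument.
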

Having rewritten $H(\cM|\cN)$ in the above form, it is now natural to replace the identity $id_{\cM}$ and the conditional expectation $E_{\cN}$ by arbitrary completely positive bimoudle maps.

In the rest of this section, we assume $\cN\subset\cM$ is an inclusion of finite von Neumann algebras and we fix a normal faithful trace $\tau_{\cM}$ on $\cM$. 
\begin{definition}\label{def::CSEntropy}
    Suppose $\cN\subseteq \cM$ and $\Phi, \Psi\in \mathbf{CP}_{\cN}(\cM)$. 
    Define the Connes-St{\o}rmer entropy between $\Phi$ and $\Psi$ as
    \begin{equation}\label{eqn:: Pimsner-Popa relative entropy between cp maps}
        H(\Phi|\Psi)=\sup_{\mathbf{x}\in \mathbf{S}}\sum_i D_{\tau_{\cM}}(\Phi(x_i)\|\Psi(x_i)),
    \end{equation}
    where $\mathbf{S}$ is the set of all finite partitions of unity in $\cM$.
\end{definition}
\begin{remark}
    Suppose $\mathbb{C}\subset P,Q\subset\cM$ are von Neumann subalgebras, then $H(E_{P}|E_{Q})$ equals to the original definition of Connes and St{\o}rmer only if $Q\subset P$. 
\end{remark}
Notice that for the right hand side of Equation \eqref{expr::2} to be finite, we need $p_{E_{\cN}(x)}\geq p_x$ for every $x\in \cM_+$, the set of all positive elements in $\cM$.
In the case of a finite index subfactor, this assumption is full-filled by the Pimsner-Popa inequality \cite{PP1986} which asserts that 
\begin{align}\label{Pimsner-Popa inequality}
    E_{\cN}(x)\geq \delta^{-2}x,\quad \forall x\in \cM_+.
\end{align}
To make sure that $H(\Phi|\Psi)$ is well-defined, we adopt the following definition of majorization between completely positive maps. 
This notion has already appeared in \cite{Connes1994} and \cite{Popa2010}.
\begin{definition}\label{def::1}
    Suppose $\Psi,\Phi:\mathcal{A}\to\mathcal{B}$ is normal completely positive.
    We say $\Phi$ is majorized by $\Psi$ and write $\Phi \preccurlyeq\Psi$ if there is a positive scalar $c$ such that $c\cdot \Psi-\Phi$ is completely positive. 
    We write $\Phi\sim\Psi$ if both $\Phi \preccurlyeq\Psi$ and $\Psi \preccurlyeq\Phi$ hold.
\end{definition}
\begin{remark}\label{rmk:: E_N dominates bimodule maps}
    In \cite{HJJLW23} it has been proved that for a finite inclusion of finite von Neumann algebras $\cN\subset\cM$, $\Phi\in\mathbf{CP}_{\cN}(\cM)$ implies that $\Phi\preccurlyeq E_{\cN}$. 
    Let $\{\eta_i\}^m_{i=1}$ be a basis of $L^2(\cM)_{\cN}$ and define the $m\times m$ positive $\cM$-valued matrix as $[G]_{ij} = \eta^*_i \eta_j$. 
    It is also proved that for $\Phi,\Psi\in \mathbf{CP}_{\cN}(\cM)$, $\Phi\preccurlyeq \Psi$ is equivalent to $\mathbf{supp} \Phi(G)\leq \mathbf{supp}  \Psi(G)$ as positive operators. 
\end{remark}
Our goal in this section is to derive an upper bound for $H(\Phi|\Psi)$ for completely positive $\cN$-bimodule maps in terms of their Fourier multipliers. 

We assume that $\cN\subset\cM$ is of finite index. 
In this and the next section, we denote by $\Omega$ the cyclic separating tracial vector in $L^2(\cM)$. 
Denote the vector state on $\End(_{\cM}L^2(\cM)\otimes_{\cN}L^2(\cM)_{\cM})$ implemented by $\Omega\otimes_{\cN}\Omega$ as $\omega_{\cN}$. 
Since $\Omega\otimes_{\cN}\Omega$ generates $_{\cM}L^2(\cM)\otimes_{\cN}L^2(\cM)_{\cM}$ under $\cM$-$\cM$ action, the state $\omega_{\cN}$ is faithful. 
As we shall see in Lemma \ref{lemma::defining Delta}, $\omega_{\cN}$ may not be a trace. 
In fact, if $\cN\subset\cM$ is a II$_1$ subfactor of finite index, then $\omega_{\cN}$ is a trace if and only if $\cN\subset \cM$ is extremal \cite{Burns2011}. 
\begin{remark}
    As pointed out by Longo in \cite{Longo1989} (see the Remark after Theorem 5.5), the extremality of a subfactor corresponds to the minimality of the trace preserving conditional expectation. 
\end{remark}
\begin{proposition}\label{prop:: characterize the state omega}
    For any $\Phi\in\mathbf{CP}_{\cN}(\cM)$ the following hold:\\
    \begin{enumerate}
        \item for all $x\in\cM$ we have $\Phi(x)=\delta v^*_{\cN} (x\otimes_{\cN}1) \widehat{\Phi}v_{\cN}$;\\
        \item $\delta \omega_{\cN}(\widehat{\Phi})=\tau_{\cM}(\Phi(1))$.
    \end{enumerate}
\end{proposition}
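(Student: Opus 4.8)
The plan is to establish (1) by a direct computation with the explicit basis formula \eqref{eqn:: multipler defined using a basis} for $\widehat{\Phi}$, and then to deduce (2) from (1) by evaluating a trace as a vector state at $\Omega$. Throughout I use the identification of $\mathcal{H}^{E_{\cN}}$ with $L^2(\cM)\otimes_{\cN}L^2(\cM)$ and the relation $v_{\cN}(z\Omega)=\Omega\otimes_{\cN}z\Omega$ supplied by Proposition \ref{prop::bimodule of a condiitonal expectation}, together with the reconstruction property of a Pimsner-Popa basis.

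First I would compute the adjoint $v_{\cN}^*$. Using $v_{\cN}(c\Omega)=\Omega\otimes_{\cN}c\Omega$ and the inner product $\langle a\Omega\otimes_{\cN}b\Omega,\Omega\otimes_{\cN}c\Omega\rangle=\tau_{\cM}(c^*E_{\cN}(a)b)$ on the relative tensor product, one checks that
\[
v_{\cN}^*(a\Omega\otimes_{\cN}b\Omega)=E_{\cN}(a)b\,\Omega,\qquad a,b\in\cM.
\]
Now I would evaluate $\delta\,v_{\cN}^*(x\otimes_{\cN}1)\widehat{\Phi}v_{\cN}$ on a generic vector $z\Omega$. Applying $v_{\cN}$ gives $\Omega\otimes_{\cN}z\Omega$; plugging this into \eqref{eqn:: multipler defined using a basis} with the first tensor leg trivial produces $\delta^{-1}\sum_j\eta_j\Omega\otimes_{\cN}\Phi(\eta_j^*)z\Omega$; letting $x$ act on the first leg and then applying the formula for $v_{\cN}^*$ yields $\delta^{-1}\sum_j E_{\cN}(x\eta_j)\Phi(\eta_j^*)z\Omega$.

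The remaining step is algebraic. Since $\Phi$ is an $\cN$-bimodule map, $E_{\cN}(x\eta_j)\Phi(\eta_j^*)=\Phi\!\left(E_{\cN}(x\eta_j)\eta_j^*\right)$, so after multiplying by $\delta$ the expression becomes $\Phi\!\left(\sum_j E_{\cN}(x\eta_j)\eta_j^*\right)z\Omega$. The sum collapses by the adjoint form of the reconstruction identity, $\sum_j E_{\cN}(x\eta_j)\eta_j^*=x$ (obtained by conjugating $\sum_j\eta_j E_{\cN}(\eta_j^*x^*)=x^*$), giving $\Phi(x)z\Omega$. This is exactly left multiplication by $\Phi(x)$, which proves (1). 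For (2) I would set $x=1$ in (1), obtaining $\Phi(1)=\delta\,v_{\cN}^*\widehat{\Phi}v_{\cN}$, and then write $\tau_{\cM}(\Phi(1))=\langle\Phi(1)\Omega,\Omega\rangle=\delta\langle\widehat{\Phi}v_{\cN}\Omega,v_{\cN}\Omega\rangle=\delta\langle\widehat{\Phi}(\Omega\otimes_{\cN}\Omega),\Omega\otimes_{\cN}\Omega\rangle=\delta\,\omega_{\cN}(\widehat{\Phi})$, again using $v_{\cN}\Omega=\Omega\otimes_{\cN}\Omega$. The computations are elementary; the only delicate points are fixing the normalization of the relative-tensor-product inner product (which controls the single factor of $\delta$), tracking which tensor leg each operator acts on, and using the correct adjoint version of the basis reconstruction identity. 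I do not expect any conceptual obstacle beyond this bookkeeping.
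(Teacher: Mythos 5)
Your proposal is correct and follows essentially the same route as the paper: both rest on the basis formula \eqref{eqn:: multipler defined using a basis}, the identification $v_{\cN}(z\Omega)=\Omega\otimes_{\cN}z\Omega$ from Proposition \ref{prop::bimodule of a condiitonal expectation}, the $\cN$-bimodule property of $\Phi$, and the reconstruction identity for the Pimsner--Popa basis. The only cosmetic difference is that you compute $v_{\cN}^*$ explicitly and evaluate on vectors, whereas the paper runs the same computation weakly through inner products $\langle v^*_{\cN}x\widehat{\Phi}v_{\cN}y_1\Omega,y_2\Omega\rangle$.
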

\begin{proof}
    Recall that by Proposition \ref{prop::bimodule of a condiitonal expectation} we identify $\mathcal{H}^{E_{\cN}}$ with $L^2(\cM)\otimes_{\cN}L^2(\cM)$ as $\cM$-$\cM$ bimodules, and thus $v_{\cN}:L^2(\cM)_{\cM}\to L^2(\cM)\otimes_{\cN}L^2(\cM)_{\cM}$ is defined by 
    \begin{align*}
        v_{\cN}(y\Omega)=\Omega\otimes_{\cN}y\Omega,\quad\forall y\in\cM.
    \end{align*}
    Therefore for any $x,y_1,y_2\in\cM$, by Equation \eqref{eqn:: multipler defined using a basis}
    \begin{align*}
        \left\langle v^*_{\cN} x \widehat{\Phi}v_{\cN}y_1\Omega,y_2\Omega\right\rangle 
        &= \left\langle \widehat{\Phi}\left(x\Omega\otimes y_1\Omega\right),\Omega\otimes y_2\Omega \right\rangle\\
        &= \delta^{-1}\tau_{\cM} (y^*_2\Phi(x)y_1)\\
        &= \delta^{-1}\left\langle \Phi(x)y_1\Omega,y_2\Omega\right\rangle.
    \end{align*}
    This proves (1). 
    Since
     \begin{align*}
        \omega_{\cN}(\widehat{\Phi})=\left\langle \widehat{\Phi}(\Omega\otimes_{\cN}\Omega),\Omega\otimes_{\cN}\Omega\right\rangle=\delta^{-1}\tau_{\cM}(\Phi(1)),
     \end{align*}
     (2) holds.
\end{proof}
\begin{corollary}\label{corollary:: comparison of Fourier multiplier}
    For any $\Phi,\Psi\in \mathbf{CP}_{\cN}(\cM)$, we have $\Phi\preccurlyeq \Psi$ if and only if $\mathbf{supp}\widehat{\Phi}\leq \mathbf{supp}\widehat{\Psi}$.
\end{corollary}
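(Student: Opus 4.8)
The plan is to transport Definition \ref{def::1} through the Fourier multiplier and then read off the supports. Recall that $\Phi\mapsto\widehat{\Phi}$ is linear, being given by the linear formula \eqref{eqn:: multipler defined using a basis}, and that, as recorded above, a normal $\cN$-bimodule map $\Theta$ is completely positive if and only if $\widehat{\Theta}\geq 0$, with $\mathcal{F}$ carrying $\mathbf{CP}_{\cN}(\cM)$ onto the positive cone of $\mathscr{P}_{2,-}$. Now for $\Phi,\Psi\in\mathbf{CP}_{\cN}(\cM)$ and $c>0$, the combination $c\Psi-\Phi$ is again a normal $\cN$-bimodule map, so by Definition \ref{def::1} we have $\Phi\preccurlyeq\Psi$ precisely when $c\Psi-\Phi\in\mathbf{CP}_{\cN}(\cM)$ for some $c>0$; by the positivity correspondence and linearity this is equivalent to $c\widehat{\Psi}-\widehat{\Phi}\geq 0$, i.e. to the existence of $c>0$ with $\widehat{\Phi}\leq c\,\widehat{\Psi}$. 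Thus the corollary reduces to the purely operator-theoretic statement that, for the positive operators $\widehat{\Phi},\widehat{\Psi}\in\mathscr{P}_{2,-}$, one has $\widehat{\Phi}\leq c\,\widehat{\Psi}$ for some $c>0$ if and only if $\mathbf{supp}\,\widehat{\Phi}\leq\mathbf{supp}\,\widehat{\Psi}$.

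One implication is immediate and holds for any pair of positive operators: if $\widehat{\Phi}\leq c\,\widehat{\Psi}$ and $\xi\in\ker\widehat{\Psi}$, then $0\leq\langle\widehat{\Phi}\xi,\xi\rangle\leq c\langle\widehat{\Psi}\xi,\xi\rangle=0$, and positivity of $\widehat{\Phi}$ forces $\widehat{\Phi}\xi=0$; hence $\ker\widehat{\Psi}\subseteq\ker\widehat{\Phi}$, which is exactly $\mathbf{supp}\,\widehat{\Phi}\leq\mathbf{supp}\,\widehat{\Psi}$. This already settles the forward direction $\Phi\preccurlyeq\Psi\Rightarrow\mathbf{supp}\,\widehat{\Phi}\leq\mathbf{supp}\,\widehat{\Psi}$.

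The converse is the main obstacle: in an arbitrary von Neumann algebra support domination need not imply scalar domination, since $\widehat{\Psi}$ can fail to be bounded below on its support. What rescues the argument is finiteness. In the finite-index setting $\mathscr{P}_{2,-}=\End(_{\cM}L^2(\cM)\otimes_{\cN}L^2(\cM)_{\cM})$ is finite-dimensional, being a relative commutant in the Jones tower, so $\widehat{\Psi}$ has a smallest nonzero spectral value $\lambda>0$ and $\widehat{\Psi}\geq\lambda\,\mathbf{supp}\,\widehat{\Psi}$. Assuming $\mathbf{supp}\,\widehat{\Phi}\leq\mathbf{supp}\,\widehat{\Psi}$ we then obtain $\widehat{\Phi}\leq\|\widehat{\Phi}\|\,\mathbf{supp}\,\widehat{\Phi}\leq\|\widehat{\Phi}\|\,\mathbf{supp}\,\widehat{\Psi}\leq(\|\widehat{\Phi}\|/\lambda)\,\widehat{\Psi}$, so that $\Phi\preccurlyeq\Psi$ with $c=\|\widehat{\Phi}\|/\lambda$. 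I would isolate the spectral-gap step so that it uses only that $\mathbf{supp}\,\widehat{\Psi}$ sits in a finite-dimensional, or more generally a closed-range, corner; this is precisely where the standing finite-index hypothesis on $\cN\subset\cM$ is consumed.

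As a cross-check, and as an alternative to the spectral-gap step, the converse can instead be extracted from Remark \ref{rmk:: E_N dominates bimodule maps}: since $\Phi\preccurlyeq\Psi$ is shown there to be equivalent to $\mathbf{supp}\,\Phi(G)\leq\mathbf{supp}\,\Psi(G)$ for $G=[\eta_i^*\eta_j]_{ij}$, it would suffice to match the support projection of $\widehat{\Phi}$ with that of $\Phi(G)=[\Phi(\eta_i^*\eta_j)]_{ij}$. Both $\Phi(G)$ and $\widehat{\Phi}$ arise from $\Phi$ by a fixed, positive-cone-preserving linear isomorphism, so their support projections are related by a fixed, basis-dependent spatial intertwiner; making that intertwiner explicit transfers the support condition term by term, and the equivalence of scalar and support domination is then inherited from the already-cited result rather than reproved.
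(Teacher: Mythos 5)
Your reduction of $\Phi\preccurlyeq\Psi$ to the existence of $c>0$ with $\widehat{\Phi}\leq c\,\widehat{\Psi}$ is exactly the content of the paper's proof: the paper likewise uses linearity of $\Theta\mapsto\widehat{\Theta}$ for the forward direction, and for the converse it writes $c\Psi(x)-\Phi(x)=\delta v_{\cN}^*R(x\otimes_{\cN}1)Rv_{\cN}$ with $R=(c\widehat{\Psi}-\widehat{\Phi})^{1/2}$, i.e.\ it re-derives via Proposition \ref{prop:: characterize the state omega}(1) the positivity correspondence that you simply quote from the preliminaries. The real divergence is that the paper's proof stops there: it never passes from the scalar-domination criterion to the support condition that actually appears in the statement, whereas you make that step explicit. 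Your forward implication (scalar domination forces support domination) is correct and fully general.

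The point to watch is your spectral-gap argument for the converse. For a finite-index subfactor, $\mathscr{P}_{2,-}=\cM'\cap\cM_2$ is indeed finite-dimensional and your argument closes, with the constant $c=\|\widehat{\Phi}\|/\lambda$. But the corollary is stated for arbitrary finite inclusions of finite von Neumann algebras, where $\cM'\cap\cM_2$ can be infinite-dimensional (already $\cN=\cM$ an infinite-dimensional abelian algebra gives $\cM'\cap\cM_2\cong\cM$, and $\widehat{\Phi}$ is then multiplication by $\Phi(1)$), and there a positive $\widehat{\Psi}$ need not be bounded below on its support; in that generality support domination genuinely does not imply scalar domination, so the equivalence requires either finite-dimensionality of the relative commutant or the closed-range hypothesis you flag. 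Since the paper's own proof silently elides this step, your version is if anything the more careful one; note only that your fallback through Remark \ref{rmk:: E_N dominates bimodule maps} inherits the same issue, because the cited support criterion for $\Phi(G)$ is of exactly the same nature and would need the same hypothesis to hold.
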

\begin{proof}
    For $c>0$, the Fourier multiplier of $c\Psi-\Phi$ is easily seen to be $c\widehat{\Psi}-\widehat{\Phi}$, hence 
    $c\Psi-\Phi$ being completely positive implies $c\widehat{\Psi}-\widehat{\Phi}\geq 0$. 
    Conversely suppose $c\widehat{\Psi}-\widehat{\Phi}\geq 0$, and let $R$ be its positive square root, then for all $x\in \cM$, 
    \begin{align*}
        c\Psi(x)-\Phi(x) = \delta v^*_{\cN}R(x\otimes_{\cN}1)R v_{\cN},
    \end{align*}
    which proves $c\Psi-\Phi$ is completely positive.
\end{proof}
We now find the density operator of the state $\omega_{\cN}$ with respect to $\tau_{\cM_2}$. 
Notice that $\tau_{\cM_2}(z\otimes_{\cN} 1)=\tau_{\cM_1}(z)$ for all $z\in \End(_{\cM}L^2(\cM)_{\cN})$. 
Define an isometry $u_{\cN}\in\Hom(_{\cM}L^2(\cM)_{\cN}, _{\cM}L^2(\cM)\otimes_{\cN}L^2(\cM)_{\cN})$ as 
\begin{align}\label{def:: u_N}
    u_{\cN}(\xi)=\xi\otimes_{\cN}\Omega,\quad \xi\in L^2(\cM),
\end{align}
so that $u^*_{\cN}(\xi\otimes_{\cN}y\Omega) = \xi\cdot E_{\cN}(y)$. 
Therefore for any $A\in \End(_{\cM}L^2(\cM)\otimes_{\cN}L^2(\cM)_{\cN})$, $u^*_{\cN}Au_{\cN}\in \End(_{\cM}L^2(\cM)_{\cN})$. 
It now follows from Equation \eqref{def:: u_N} and the definition of $\omega_{\cN}$ that
\begin{align*}
    \omega_{\cN}(A) &= \tau_{\cM}(J_{\cM}u^*_{\cN}A^*u_{\cN}J_{\cM}).
\end{align*}
In addition, by Equation \eqref{eqn::canonical trace on M1} and Equation \eqref{eqn::canonical trace on M2} we have
\begin{align*}
    \tau_{\cM_2}(A) &= \tau_{\cM_1}(u^*_{\cN}Au_{\cN}).
\end{align*}
\begin{lemma}\label{lemma::defining Delta}
    There exists a unique operator $\Delta>0$ in $\End(_{\cM}L^2(\cM)_{\cN})$ such that 
    \begin{align}\label{defining property of Delta}
       \tau_{\cM_2}\left((\Delta\otimes_{\cN} 1)A\right) =  \omega_{\cN}(A),\quad \forall A\in \End(_{\cM}L^2(\cM)\otimes_{\cN}L^2(\cM)_{\cN}).
    \end{align}
\end{lemma}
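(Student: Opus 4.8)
The plan is to transport the asserted operator identity over the large algebra $B:=\End({}_{\cM}L^2(\cM)\otimes_{\cN}L^2(\cM)_{\cN})$ down to a scalar Radon--Nikodym problem over the smaller finite von Neumann algebra $C:=\End({}_{\cM}L^2(\cM)_{\cN})=\cM'\cap\cM_1$, using the isometry $u_{\cN}$ together with the amplification $z\mapsto z\otimes_{\cN}1$ it carries. The key observation is that both sides of \eqref{defining property of Delta}, regarded as functionals of $A$, factor through the compression $\Theta(A):=u_{\cN}^*Au_{\cN}\in C$, and that $\Theta$ is surjective; hence the whole identity collapses to a single density statement on $C$, where existence and uniqueness of $\Delta$ are standard.

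First I would record the two compression identities. Since $u_{\cN}(\xi)=\xi\otimes_{\cN}\Omega$, for $\Delta\in C$ one has $(\Delta\otimes_{\cN}1)u_{\cN}=u_{\cN}\Delta$, so $u_{\cN}^*(\Delta\otimes_{\cN}1)=\Delta u_{\cN}^*$. Combining this with the identity $\tau_{\cM_2}(A)=\tau_{\cM_1}(u_{\cN}^*Au_{\cN})$ established just above the lemma gives
\[
\tau_{\cM_2}\big((\Delta\otimes_{\cN}1)A\big)=\tau_{\cM_1}\big(\Delta\,u_{\cN}^*Au_{\cN}\big)=\tau_{\cM_1}\big(\Delta\,\Theta(A)\big).
\]
On the other hand, since $\Omega\otimes_{\cN}\Omega=u_{\cN}\Omega$, we have $\omega_{\cN}(A)=\langle Au_{\cN}\Omega,u_{\cN}\Omega\rangle=\langle\Theta(A)\Omega,\Omega\rangle=:\omega'(\Theta(A))$, where $\omega'(T):=\langle T\Omega,\Omega\rangle$ for $T\in C$; equivalently $\omega'(T)=\tau_{\cM}(J_{\cM}T^*J_{\cM})$, using that $J_{\cM}\cM'J_{\cM}=\cM$ and $J_{\cM}\cM_1J_{\cM}=\cN'$ force $J_{\cM}T^*J_{\cM}\in\cN'\cap\cM\subseteq\cM$. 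Because $u_{\cN}$ is an isometry, $\Theta(z\otimes_{\cN}1)=z$ for every $z\in C$, so $\Theta$ is onto; therefore \eqref{defining property of Delta} holds for all $A\in B$ if and only if $\tau_{\cM_1}(\Delta T)=\omega'(T)$ for all $T\in C$.

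It then remains to produce a unique $\Delta$ in $C$ implementing $\omega'$ against the trace $\tau_{\cM_1}|_C$. Here $C$ is a finite von Neumann algebra and $\tau_{\cM_1}|_C$ a faithful normal trace, so the predual identification $C_*\cong L^1(C,\tau_{\cM_1})$ furnishes a unique positive $\Delta$ affiliated with $C$ with $\omega'(T)=\tau_{\cM_1}(\Delta T)$ for all $T\in C$; uniqueness is immediate from faithfulness of the trace, and positivity of $\Delta$ from positivity of $\omega'$. Moreover $\omega'$ is faithful — equivalently $\Omega$ is separating for $C$, which holds since $T\Omega=0$ forces the element $J_{\cM}TJ_{\cM}\in\cM$ to annihilate the separating vector $\Omega$, whence $T=0$ — so $\Delta$ has full support.

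The genuine obstacle is to upgrade $\Delta$ to a bounded, boundedly invertible element, so that it really lies in $\End({}_{\cM}L^2(\cM)_{\cN})$ and $\Delta>0$ in the strong sense rather than merely as an affiliated, non-singular operator. This is where the finite-index hypothesis is essential: I would show that $\omega'$ and $\tau_{\cM_1}|_C$ are mutually dominated, $c_1\,\tau_{\cM_1}(T)\le\omega'(T)\le c_2\,\tau_{\cM_1}(T)$ for $T\in C_+$, which yields $c_1\le\Delta\le c_2$. I expect to extract both bounds from the finite-index data, namely the Pimsner--Popa inequality \eqref{Pimsner-Popa inequality} and the boundedness with bounded inverse of the central derivative $h_{\cM_1,\cM}$ of \eqref{eqn:: the derivative between tau_M1 and tau_M}, after writing $\tau_{\cM_1}|_C$ through the Pimsner--Popa basis as $\tau_{\cM_1}(T)=\delta^{-2}\sum_j\langle T\eta_j\Omega,\eta_j\Omega\rangle$ and comparing it with the single term $\omega'(T)=\langle T\Omega,\Omega\rangle$ (for the upper bound, after arranging the basis so that some $\eta_{j_0}=1$). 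Controlling these constants is the delicate point; once it is settled, the uniqueness and the defining identity \eqref{defining property of Delta} follow from the reduction above.
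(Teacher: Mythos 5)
Your argument is essentially the paper's: both exploit the intertwining relation $u_{\cN}^*(\Delta\otimes_{\cN}1)=\Delta u_{\cN}^*$ together with $\tau_{\cM_2}(A)=\tau_{\cM_1}(u_{\cN}^*Au_{\cN})$ and $\omega_{\cN}(A)=\tau_{\cM}(J_{\cM}u_{\cN}^*A^*u_{\cN}J_{\cM})$ to reduce the statement to finding the Radon--Nikodym derivative of $T\mapsto\tau_{\cM}(J_{\cM}T^*J_{\cM})$ with respect to $\tau_{\cM_1}$ on $\cM'\cap\cM_1$, with uniqueness obtained by restricting to $A=z\otimes_{\cN}1$. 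The only divergence is your (legitimate) concern that the density be a bounded, boundedly invertible element of $\cM'\cap\cM_1$ rather than merely affiliated --- a point the paper silently assumes; your sketched two-sided comparison does close this, most directly via the identity $\delta^{2}\tau_{\cM_1}(T)=\bigl\langle \bigl(\sum_j\eta_j^*\eta_j\bigr)T^{1/2}\Omega,\,T^{1/2}\Omega\bigr\rangle$ for $T\in(\cM'\cap\cM_1)_+$ with a Pimsner--Popa basis containing $1$, which yields $\delta^{2}\|\sum_j\eta_j^*\eta_j\|_\infty^{-1}\leq\Delta\leq\delta^{2}$ without needing the Pimsner--Popa inequality or $h_{\cM_1,\cM}$.
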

\begin{proof}
Let $\Delta$ be the unique positive operator in $\cM'\cap\cM_1$(which we identify as $\End(_{\cM}L^2(\cM)_{\cN})$) such that $\tau_{\cM_1}(\Delta z)=\tau_{\cM}(J_{\cM}z^*J_{\cM})$ for all $z\in\cM'\cap \cM_1$. 

Observe that operators in $\cM'\cap \cM_1$ commute with the right action of $\cN$ on $L^2(\cM)$. 
Thus for $x,y\in\cM$:
\begin{align*}
    u^*_{\cN}(\Delta\otimes_{\cN} 1)(x\Omega\otimes_{\cN}y\Omega) &= u^*_{\cN}\left( \Delta(x\Omega)\otimes_{\cN}y\Omega\right) =\Delta(x\Omega)\cdot E_{\cN}(y)\\
    &= \Delta(xE_{\cN}(y)\Omega) = \Delta u^*_{\cN}(x\Omega\otimes_{\cN}y\Omega),
\end{align*}
and we get $u^*_{\cN}\Delta\otimes_{\cN} 1=\Delta u^*_{\cN}$. 
This implies that for all $A\in \End(_{\cM}L^2(\cM)\otimes_{\cN}L^2(\cM)_{\cN})$,
\begin{align*}
    \tau_{\cM_2}\left((\Delta\otimes_{\cN} 1)A\right) &= \tau_{\cM_1}(\Delta u^*_{\cN}Au_{\cN}) = \tau_{\cM}(J_{\cM}u^*_{\cN}A^*u_{\cN}J_{\cM}) = \omega_{\cN}(A).
\end{align*}

To prove uniqueness, suppose $\Delta' \in \End(_{\cM}L^2(\cM)_{\cN})$ satisfies Equation \eqref{defining property of Delta}. 
Then for every $z\in \End(_{\cM}L^2(\cM)_{\cN})$:
\begin{align*}
    \tau_{1}(\Delta' z) &= \tau_{\cM_2} \left(\Delta' z\otimes_{\cN} 1\right) = \omega_{\cN}(z\otimes_{\cN} 1)\\
    &= \tau_{\cM}(J_{\cM}z^*J_{\cM}) = \tau_{1}(\Delta z),
\end{align*}
hence $\Delta'=\Delta$.
\end{proof}
If there is no confusion, we will denote $\Delta\otimes_{\cN}1$ as $\Delta$.
\begin{remark}
    If $\cN\subset\cM$ is a II$_1$ subfactor of finite index, then pictorially we have:
    \begin{align*}
         \omega_{\cN}(\widehat{\Phi}_0)= \delta^{-1}\tau_{\cM}(\Phi_0(1)) = \delta^{-1}\tau_{\cM}\left(
            \vcenter{\hbox{\begin{tikzpicture}
            \begin{scope}[scale=0.55]
            \ThinLine (1.3, -0.5) -- (1.3, 1.5);
            \ThinLine (0.6,1) .. controls +(0,0.4) and +(0,0.4) .. (-0.6,1);
            \ThinLine (-0.6,1) -- (-0.6,0);
            \ThinLine (0.6,0) .. controls +(0,-0.4) and +(0,-0.4) .. (-0.6,0);
            \FillGrey (0.6,1) .. controls +(0,0.4) and +(0,0.4) .. (-0.6,1)
            -- (-0.6,0) -- (-0.6,0) .. controls +(0,-0.4) and +(0,-0.4) .. (0.6,0)
            -- (0.6,1);
            \FillGrey (1.3,-0.5) -- (1.9,-0.5) -- (1.9,1.5) -- (1.3,1.5);
            \draw[fill=white] (0.3,0) rectangle (1.6,1);
            \node[scale=1] at (0.95,0.5) {$\widehat{\Phi}_0$};
        \end{scope}
        \end{tikzpicture}}}
        \right) = 
        \delta^{-2}\vcenter{\hbox{\begin{tikzpicture}
            \begin{scope}[scale=0.55]
                \FillGrey (0.3,0) .. controls  + (0,-0.4) and + (0,-0.4) .. (-0.3,0)
                -- (-0.3,1) .. controls + (0,0.4) and + (0,0.4).. (0.3,1);
                \FillGrey (1,0) .. controls  + (0,-0.4) and + (0,-0.4) .. (1.6,0)
                -- (1.6,1) .. controls + (0,0.4) and + (0,0.4).. (1,1);
                \VeryThinLine (0.3,0) .. controls  + (0,-0.4) and + (0,-0.4) .. (-0.3,0)
                -- (-0.3,1) .. controls + (0,0.4) and + (0,0.4).. (0.3,1);
                \VeryThinLine (1,0) .. controls  + (0,-0.4) and + (0,-0.4) .. (1.6,0)
                -- (1.6,1) .. controls + (0,0.4) and + (0,0.4).. (1,1);
            \draw[fill=white] (0,0) rectangle (1.3,1);
            \node at (0.65,0.5) {$\widehat{\Phi}_0$};
            \end{scope}
            \end{tikzpicture}}}.
    \end{align*}
    Therefore we obtain a pictorial characterization of $\Delta\otimes_{\cN} 1$:
    \begin{align*}
        \vcenter{\hbox{\begin{tikzpicture}
            \begin{scope}[scale=0.55]
                \FillGrey (0.3,0) .. controls  + (0,-0.4) and + (0,-0.4) .. (-0.3,0)
                -- (-0.3,1) .. controls + (0,0.4) and + (0,0.4).. (0.3,1);
                \FillGrey (1,0) .. controls  + (0,-0.4) and + (0,-0.4) .. (1.6,0)
                -- (1.6,1) .. controls + (0,0.4) and + (0,0.4).. (1,1);
                \VeryThinLine (0.3,0) .. controls  + (0,-0.4) and + (0,-0.4) .. (-0.3,0)
                -- (-0.3,1) .. controls + (0,0.4) and + (0,0.4).. (0.3,1);
                \VeryThinLine (1,0) .. controls  + (0,-0.4) and + (0,-0.4) .. (1.6,0)
                -- (1.6,1) .. controls + (0,0.4) and + (0,0.4).. (1,1);
            \draw[fill=white] (0,0) rectangle (1.3,1);
            \node at (0.65,0.5) {$x$};
            \end{scope}
            \end{tikzpicture}}} = 
        \vcenter{\hbox{\begin{tikzpicture}
            \begin{scope}[scale=0.55]
                \FillGrey (-0.6, -0.9) -- (2.6,-0.9) --(2.6, 2.4) --(-0.3, 2.4) --(-0.3,-0.9);
                \FillWhite (0.3,0) .. controls  + (0,-1) and + (0,-1) .. (2.3,0)
                -- (2.3,1.5) .. controls + (0,1) and + (0,1).. (0.3,1.5) -- (0.3,1)--(0.3,0);
                \FillGrey (1,0) .. controls  + (0,-0.4) and + (0,-0.4) .. (1.6,0)
                -- (1.6,1.5) .. controls + (0,0.4) and + (0,0.4).. (1,1.5)--(1,1)--(1,0);
                \VeryThinLine (0.3,0) .. controls  + (0,-1) and + (0,-1) .. (2.3,0)
                -- (2.3,1.5) .. controls + (0,1) and + (0,1).. (0.3,1.5) -- (0.3,1);
                \VeryThinLine (1,0) .. controls  + (0,-0.4) and + (0,-0.4) .. (1.6,0)
                -- (1.6,1.5) .. controls + (0,0.4) and + (0,0.4).. (1,1.5)--(1,1);
                \draw[fill=white] (0,0) rectangle (1.3,1);
                \node at (0.65,0.5) {$x$};
                \draw[fill=white] (0,1.1) rectangle (0.6,1.7);
                \node [scale=0.8] at (0.3,1.4) {$\Delta$};
            \end{scope}
            \end{tikzpicture}}},\quad \forall x\in \mathscr{P}^{\cN\subset \cM}_{2,+}.
    \end{align*}
    Additionally, it is worth noting that the operator $\Delta$ has been introduced in Burns' thesis \cite{Burns2011}, where it is denoted as $\tilde{w}$ (see notation 2.2.13 and lemma 2.2.14 on page 35).
\end{remark}
Next we derive an upper bound for $H(\Phi\vert \Psi)$, and we will find $\Delta$ naturally appear in this process. 
In fact as a direct consequence of Equation \eqref{Pimsner-Popa inequality}, one can show that $H(\cM|\cN)=H(id_{\cM},E_{\cN})\leq \log\delta^2$ when $\cN\subset\cM$ is a II$_1$ subfacotr of finite index (c.f. Proposition 3.5 of \cite{PP1986}). 
However, as formula \eqref{eqn::formula for H(M|N)} indicates, this upper bound is optimal only if $\cN\subset \cM$ is extremal.  
\begin{theorem}\label{thm::popaentropycompletepositive}
    Suppose $\mathcal{N}\subset\mathcal{M}$ is a finite inclusion of finite von Neumann algebras, and
    $\Phi,\Psi\in \mathbf{CP}_{\cN}(\cM)$ with $\Phi\preccurlyeq \Psi$. 
    Then
    \begin{align}\label{eqn::upper bound for H}
        H(\Phi|\Psi)\leq \delta D_{\tau_{\cM_2}}(\Delta^{1/2} \widehat{\Phi}\Delta^{1/2} \|\Delta^{1/2} \widehat{\Psi}\Delta^{1/2} ).
    \end{align}
\end{theorem}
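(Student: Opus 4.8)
The plan is to fix a finite partition of unity $\mathbf{x}=\{x_i\}_{i=1}^N\in\mathbf{S}$ and to bound the single sum $\sum_i D_{\tau_{\cM}}(\Phi(x_i)\|\Psi(x_i))$ by the right-hand side, which does not depend on $\mathbf{x}$; taking the supremum over $\mathbf{S}$ then yields the claim by Definition \ref{def::CSEntropy}. The guiding idea is to realize this sum as the image of $\Delta^{1/2}\widehat\Phi\Delta^{1/2}$ and $\Delta^{1/2}\widehat\Psi\Delta^{1/2}$ under one trace-preserving completely positive map, so that the inequality becomes an instance of the monotonicity (data-processing) of Umegaki's relative entropy under such maps.

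The first thing I would record is that, since $\widehat\Phi\in\End(_{\cM}L^2(\cM)\otimes_{\cN}L^2(\cM)_{\cM})=\cM'\cap\cM_2$, it commutes with the left action $x\otimes_{\cN}1$ of $\cM$; combined with Proposition \ref{prop:: characterize the state omega}(1) this gives the symmetric formula $\Phi(x_i)=\delta\, v_{\cN}^*(x_i^{1/2}\otimes_{\cN}1)\widehat\Phi(x_i^{1/2}\otimes_{\cN}1)v_{\cN}$. Write $T_\Phi=\Delta^{1/2}\widehat\Phi\Delta^{1/2}$ and $T_\Psi=\Delta^{1/2}\widehat\Psi\Delta^{1/2}$. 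Using that $\Delta=\Delta\otimes_{\cN}1$ also commutes with $x_i^{1/2}\otimes_{\cN}1$ (as $\Delta\in\cM'\cap\cM_1$), I set $c_i=(x_i^{1/2}\otimes_{\cN}1)\Delta^{-1/2}v_{\cN}$, so that $\Delta^{1/2}c_i=(x_i^{1/2}\otimes_{\cN}1)v_{\cN}$ is bounded and $\delta\, c_i^*T_\Phi c_i=\Phi(x_i)$. I then define the channel $\Lambda:\cM_2\to\bigoplus_{i=1}^N\cM$, $\Lambda(A)=\bigoplus_{i=1}^N\delta\, c_i^*Ac_i$, which is normal and completely positive, lands in $\cM$ in each block because each $c_i$ intertwines the right $\cM$-action, and satisfies $\Lambda(T_\Phi)=\bigoplus_i\Phi(x_i)$ and $\Lambda(T_\Psi)=\bigoplus_i\Psi(x_i)$.

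The crux is to verify that $\Lambda$ is trace-preserving from $(\cM_2,\delta\tau_{\cM_2})$ to $\bigoplus_{i=1}^N\cM$ equipped with the trace $\sum_i\tau_{\cM}(\,\cdot\,)$; that is, $\sum_i\tau_{\cM}(c_i^*Ac_i)=\tau_{\cM_2}(A)$ for all $A$. For this I would combine $\omega_{\cN}(B)=\tau_{\cM}(v_{\cN}^*Bv_{\cN})$ with the defining property $\tau_{\cM_2}(\Delta B)=\omega_{\cN}(B)$ of Lemma \ref{lemma::defining Delta} and the commutation of $\Delta$ with $x_i^{1/2}\otimes_{\cN}1$ to obtain, by cyclicity of $\tau_{\cM_2}$, the identity $\tau_{\cM}(c_i^*Ac_i)=\tau_{\cM_2}\!\big(A(x_i\otimes_{\cN}1)\big)$; summing and using $\sum_i x_i=1$ gives $\tau_{\cM_2}(A)$. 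With trace preservation established, monotonicity of relative entropy gives, for each $\mathbf{x}$,
\[
\sum_i D_{\tau_{\cM}}(\Phi(x_i)\|\Psi(x_i))=D\big(\Lambda(T_\Phi)\,\big\|\,\Lambda(T_\Psi)\big)\le D_{\delta\tau_{\cM_2}}(T_\Phi\|T_\Psi)=\delta\, D_{\tau_{\cM_2}}(T_\Phi\|T_\Psi),
\]
where the first equality is additivity of relative entropy over the block-diagonal decomposition and the last is homogeneity of $D$ in the trace. Taking the supremum over $\mathbf{S}$ completes the argument; the hypothesis $\Phi\preccurlyeq\Psi$ enters through Corollary \ref{corollary:: comparison of Fourier multiplier}, which gives $\mathbf{supp}\,T_\Phi\le\mathbf{supp}\,T_\Psi$ and hence makes all relative entropies well defined.

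The step I expect to be the main obstacle is the use of $\Delta^{-1/2}$: the operator $\Delta>0$ is faithful but need not be bounded below for a general finite inclusion, so $c_i$ and hence $\Lambda$ need not be bounded on all of $\cM_2$. When $\cN\subset\cM$ is a subfactor this difficulty disappears, since $\cM'\cap\cM_1$ is finite-dimensional and $\Delta$ is invertible. In general I would replace $\Delta$ by $\Delta+\epsilon$ in the definition of $c_i$, obtaining bounded completely positive maps $\Lambda_\epsilon$ that are trace-non-increasing and satisfy $\Lambda_\epsilon(T_\Phi)\to\bigoplus_i\Phi(x_i)$ and $\Lambda_\epsilon(T_\Psi)\to\bigoplus_i\Psi(x_i)$ strongly, and then recover the inequality as $\epsilon\downarrow 0$ using lower semicontinuity of relative entropy; a spectral truncation of $\Delta$ is an alternative. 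Checking that this limiting procedure preserves the data-processing inequality, rather than the trace computations themselves (which reduce to cyclicity and $\sum_i x_i=1$), is the only delicate point.
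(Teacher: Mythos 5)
Your proposal is correct and follows essentially the same route as the paper: for each partition of unity one builds the completely positive map $A\mapsto\bigl(v_{\cN}^*x_j\Delta^{-1/2}A\Delta^{-1/2}v_{\cN}\bigr)_j$ into $\cM^{\oplus n}$, verifies trace preservation via the defining property of $\Delta$ from Lemma \ref{lemma::defining Delta}, identifies the images of $\Delta^{1/2}\widehat{\Phi}\Delta^{1/2}$ and $\Delta^{1/2}\widehat{\Psi}\Delta^{1/2}$ with $(\Phi(x_j))_j$ and $(\Psi(x_j))_j$ via Proposition \ref{prop:: characterize the state omega}(1), and concludes by data processing; your symmetric placement of $x_i^{1/2}$ and the bookkeeping of the factor $\delta$ are only cosmetic differences. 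Your closing remark on the possible unboundedness of $\Delta^{-1/2}$ is a reasonable extra precaution (the paper uses $\Delta^{-1/2}$ without comment), but it does not change the argument.
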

\begin{proof}
    Let $\{x_j\}_{1\leq j\leq n}\subset\cM$ be a finite partition of unity.
    We define a completely positive map $T:\End(_{\cM}L^2(\cM)\otimes_{\cN}L^2(\cM)_{\cM})\to \cM^{\oplus n}$ by
    \begin{align*}
        T(A)=\bigg(v^*_{\cN}x_j\Delta^{-1/2} A\Delta^{-1/2} v_{\cN}\bigg)_{1\leq j\leq n}.
    \end{align*}

    Define the trace $Tr$ on $\cM^{\oplus n}$ as $Tr\big((x_j)_{1\leq j\leq n}\big) = \sum_j\tau_{\cM}(x_j)$.
    We shall show that $T$ is trace-preserving. 
    Then for all $A$ in $\End(_{\cM}L^2(\cM)\otimes_{\cN}L^2(\cM)_{\cM})$
    \begin{align*}
        Tr(T(A))&=\sum^n_{j=1}\tau_{\cM}(v^*_{\cN}x_j\Delta^{-1/2}A\Delta^{-1/2}v_{\cN})\\
        &=\tau_{\cM}(v^*_{\cN}\Delta^{-1/2}A\Delta^{-1/2}v_{\cN})\\
        &=\omega_{\cN}(\Delta^{-1/2}A\Delta^{-1/2})=\tau_{\cM_2}(A),
    \end{align*}
    where the last equality follows from the Lemma \ref{lemma::defining Delta}. 
    Thus we deduce that $Tr\circ T=\tau_{\cM_2}$. 
    Now by the statement (1) of Proposition \ref{prop:: characterize the state omega}:
    \begin{align*}
        \delta D_{Tr}(T(\Delta^{1/2}\widehat{\Phi}\Delta^{1/2})\| T(\Delta^{1/2}\widehat{\Psi}\Delta^{1/2}))=\sum^n_{j=1} D_{\tau_{\cM}}(\Phi(x_j)\|\Psi(x_j)).
    \end{align*}
    Applying the data processing inequality, we have
    \begin{align*}
        \sum^n_{j=1} D_{\tau_{\cM}}(\Phi(x_j)\|\Psi(x_j))&= \delta D_{Tr}(T(\Delta^{1/2}\widehat{\Phi}\Delta^{1/2}_L)\| T(\Delta^{1/2}\widehat{\Psi}\Delta^{1/2}_L))\\
        &\leq \delta D_{\tau_{\cM_2}}(\Delta^{1/2}\widehat{\Phi}\Delta^{1/2}\|\Delta^{1/2}\widehat{\Psi}\Delta^{1/2}).
    \end{align*}
    This implies, by taking supremum over all finite partitions in $\cM$, 
    \begin{align*}
    H(\Phi|\Psi)\leq \delta D_{\tau_{\cM_2}}(\Delta^{1/2}\widehat{\Phi}\Delta^{1/2}\| \Delta^{1/2}\widehat{\Psi}\Delta^{1/2}).
    \end{align*}
    Hence the theorem follows.
\end{proof}
\begin{remark}
    Another natural trace defined on $\cM'\cap \cM_2$ is given by
    \begin{align*}
        \tau' (x) = \tau_{\cN'}(v^*_{\cN}xv_{\cN}), x\in \cM'\cap \cM_2.
    \end{align*}
    The density operator of $\omega_{\cN}$ with respect to $\tau'$ is given by $1\otimes_{\cN} J_{\cM}\Delta J_{\cM}$, so it is possible to express the upper bound using relative in terms of relative entropy with respect to $\tau'$ as well. 
\end{remark}
We now compute the upperbound in Inequality \eqref{eqn::upper bound for H} subject to the case where $\Phi = id_{\cM}$ and $\Psi = E_{\cN}$. 
\begin{lemma}\label{lemma:: Fourier multiplier of identity map}
    Let $h_{\cM_1,\cM}\in Z(\cM)$ be as in Equation \eqref{eqn:: the derivative between tau_M1 and tau_M}, then
    \begin{align*}
        \delta^{-1}\widehat{id_{\cM}} = h_{\cM_1,\cM}e_{\cM},
    \end{align*}
    and consequently $E^{\cM_2}_{\cM_1}(\delta^{-1}\widehat{id_{\cM}}) = \delta^{-2}$.
\end{lemma}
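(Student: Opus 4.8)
The plan is to prove the operator identity by evaluating both sides on the total set of vectors $\{\delta x e_{\cN} y\,\Omega_{\cM_1} : x,y\in\cM\}$, which under the identification ${}_{\cM}L^2(\cM_1)_{\cM}\cong {}_{\cM}L^2(\cM)\otimes_{\cN}L^2(\cM)_{\cM}$ of Section 2.3 corresponds to $x\Omega\otimes_{\cN}y\Omega$. Working in the $L^2(\cM_1)$ picture is convenient because the two Jones projections $e_{\cN},e_{\cM}$ and the central derivative $h_{\cM_1,\cM}$ all act transparently there, and because $id_{\cM}\in\mathbf{CP}_{\cN}(\cM)$ so that $\widehat{id_{\cM}}$ is defined by Equation \eqref{eqn:: multipler defined using a basis}.

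First I would compute the left-hand side. Transporting the defining formula \eqref{eqn:: multipler defined using a basis} with $\Phi=id_{\cM}$ through the identification shows that, as an operator on $L^2(\cM_1)$, $\widehat{id_{\cM}}$ sends $\delta x e_{\cN} y\,\Omega_{\cM_1}$ to $\sum_j x\eta_j e_{\cN}\eta_j^* y\,\Omega_{\cM_1}$. At this point I would invoke the Pimsner--Popa basis identity $\sum_j \eta_j e_{\cN}\eta_j^* = 1$ in $\cM_1$, which follows at once from $\sum_j \eta_j E_{\cN}(\eta_j^* z)=z$ applied to $z\Omega$ and the fact that $e_{\cN}(z\Omega)=E_{\cN}(z)\Omega$. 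Since $x,y\in\cM\subset\cM_1$, this collapses the sum as elements of $\cM_1$ to $x\big(\sum_j\eta_j e_{\cN}\eta_j^*\big)y=xy$, so $\delta^{-1}\widehat{id_{\cM}}$ sends $\delta x e_{\cN} y\,\Omega_{\cM_1}$ to $\delta^{-1}xy\,\Omega_{\cM_1}$.

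Next I would compute the right-hand side on the same vectors. Since $e_{\cM}z\,\Omega_{\cM_1}=E^{\cM_1}_{\cM}(z)\,\Omega_{\cM_1}$ for $z\in\cM_1$, the $\cM$-bimodularity of $E^{\cM_1}_{\cM}$ together with Equation \eqref{eqn:: expectation on e_N in M} gives $e_{\cM}(\delta x e_{\cN} y\,\Omega_{\cM_1})=\delta\, x\,E^{\cM_1}_{\cM}(e_{\cN})\,y\,\Omega_{\cM_1}=\delta^{-1} h^{-1}_{\cM_1,\cM}xy\,\Omega_{\cM_1}$, where I use that $h^{-1}_{\cM_1,\cM}\in Z(\cM)$ commutes with $x$. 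Left multiplication by the central element $h_{\cM_1,\cM}\in\cM\subset\cM_1$ then cancels the inverse and yields $\delta^{-1}xy\,\Omega_{\cM_1}$, matching the left-hand side. As the vectors $\delta x e_{\cN} y\,\Omega_{\cM_1}$ are total in $L^2(\cM_1)$, this establishes $\delta^{-1}\widehat{id_{\cM}} = h_{\cM_1,\cM}e_{\cM}$. For the final assertion I would apply $E^{\cM_2}_{\cM_1}$ to this identity, pull the central element $h_{\cM_1,\cM}\in\cM_1$ out by bimodularity, and substitute $E^{\cM_2}_{\cM_1}(e_{\cM}) = \delta^{-2}h^{-1}_{\cM_1,\cM}$ from Equation \eqref{eqn:: expectation of e_M} to conclude $E^{\cM_2}_{\cM_1}(\delta^{-1}\widehat{id_{\cM}})=h_{\cM_1,\cM}\delta^{-2}h^{-1}_{\cM_1,\cM}=\delta^{-2}$.

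The individual computations are short once the framework is fixed; the main obstacle is the bookkeeping, namely keeping the identification $L^2(\cM_1)\cong L^2(\cM)\otimes_{\cN}L^2(\cM)$ consistent and tracking the powers of $\delta$ it introduces, and interpreting both $\widehat{id_{\cM}}$ and the product $h_{\cM_1,\cM}e_{\cM}$ as operators in $\cM_2$ acting on $L^2(\cM_1)$, so that it is genuinely \emph{left} multiplication by the central element $h_{\cM_1,\cM}$ that appears. No genuinely hard analytic point arises.
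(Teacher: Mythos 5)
Your proposal is correct and follows essentially the same route as the paper: both reduce the operator identity to Equation \eqref{eqn:: expectation on e_N in M}, the paper by checking on the single $\cM$-$\cM$-cyclic vector $e_{\cN}\Omega_{\cM_1}$ and you by checking on the whole total set $\delta x e_{\cN}y\,\Omega_{\cM_1}$, which amounts to the same computation carried out more explicitly (including the basis identity $\sum_j\eta_j e_{\cN}\eta_j^*=1$ that the paper leaves implicit). The concluding step via Equation \eqref{eqn:: expectation of e_M} is likewise the intended one.
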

\begin{proof}
    By cyclicity of $e_{\cN}\Omega_{\cM_1}$, it is enough to check that $h_{\cM_1,\cM}e_{\cM} (e_{\cN}\Omega_{\cM_1}) = \delta^{-2}\Omega_{\cM_1} = \delta^{-1}\widehat{id_{\cM}}(e_{\cN}\Omega_{\cM_1})$. 
    Equivalently we need to show
    \begin{align*}
        E^{\cM_1}_{\cM} (e_{\cN}) = \delta^{-2}h^{-1}_{\cM_1,\cM},
    \end{align*} 
 which is nothing but Equation \eqref{eqn:: expectation on e_N in M}. 
\end{proof}
\begin{lemma}\label{lemma::the first term is a projection}
    Let $\cN\subset\cM$ be a finite inclusion of finite von Neumann algebras. 
    The element
    \begin{align*}
        \delta^{-1}\Delta^{1/2}\widehat{id_{\cM}}\Delta^{1/2}
    \end{align*}
    is a projection in $\cM'\cap\cM_2$ equivalent to $e_{\cM}$.
\end{lemma}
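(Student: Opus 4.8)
The plan is to prove both assertions at once by exhibiting a single partial isometry $w\in\cM'\cap\cM_2$ with $w^{*}w=e_{\cM}$ and $ww^{*}=P$, where $P:=\delta^{-1}\Delta^{1/2}\widehat{id_{\cM}}\Delta^{1/2}$. This is economical: as soon as $w$ is a partial isometry, $ww^{*}$ is automatically a projection, and $w$ itself implements the Murray--von Neumann equivalence $P=ww^{*}\sim w^{*}w=e_{\cM}$, so the lemma follows immediately. The natural candidate is
\[
  w:=h_{\cM_1,\cM}^{1/2}\,\Delta^{1/2}\,e_{\cM},
\]
which lies in $\cM'\cap\cM_2$ since $\Delta=\Delta\otimes_{\cN}1\in\cM'\cap\cM_1$, $h_{\cM_1,\cM}\in Z(\cM)\subset\cM'$, and the Jones projection $e_{\cM}$ commutes with $\cM$; consequently $P=ww^{*}$ will automatically be a projection in $\cM'\cap\cM_2$.

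I would first compute the two products. Using Lemma \ref{lemma:: Fourier multiplier of identity map} to substitute $\delta^{-1}\widehat{id_{\cM}}=h_{\cM_1,\cM}e_{\cM}$, together with the centrality of $h_{\cM_1,\cM}$ (so that it commutes with $\Delta$ and with $e_{\cM}$) and the fact that $e_{\cM}^{2}=e_{\cM}$, one gets $ww^{*}=\Delta^{1/2}\big(h_{\cM_1,\cM}e_{\cM}\big)\Delta^{1/2}=\delta^{-1}\Delta^{1/2}\widehat{id_{\cM}}\Delta^{1/2}=P$. For the other product, $w^{*}w=e_{\cM}\big(h_{\cM_1,\cM}\Delta\big)e_{\cM}$ with $h_{\cM_1,\cM}\Delta\in\cM_1$; applying the Jones pull-down identity $e_{\cM}Xe_{\cM}=E^{\cM_1}_{\cM}(X)e_{\cM}$ for $X\in\cM_1$ and extracting the central factor yields $w^{*}w=h_{\cM_1,\cM}\,E^{\cM_1}_{\cM}(\Delta)\,e_{\cM}$. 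Everything therefore reduces to the single identity $E^{\cM_1}_{\cM}(\Delta)=h_{\cM_1,\cM}^{-1}$, which would give $w^{*}w=e_{\cM}$.

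The heart of the argument, and the step I expect to be the main obstacle, is this identity. Since $\Delta\in\cM'\cap\cM_1$ commutes with $\cM$, its image $E^{\cM_1}_{\cM}(\Delta)$ lies in $Z(\cM)$, so it is completely determined by its pairings against $Z(\cM)$ through the faithful trace $\tau_{\cM}$. The key observation is that $Z(\cM)=\cM\cap\cM'\subset\cM'\cap\cM_1$, so for $a\in Z(\cM)$ the defining property of $\Delta$ in Lemma \ref{lemma::defining Delta} applies and gives $\tau_{\cM_1}(\Delta a)=\tau_{\cM}(J_{\cM}a^{*}J_{\cM})$. A one-line check shows $J_{\cM}a^{*}J_{\cM}=a$ for central $a$ (indeed $J_{\cM}a^{*}J_{\cM}\,x\Omega=xa\Omega=ax\Omega$), whence $\tau_{\cM_1}(\Delta a)=\tau_{\cM}(a)$. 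On the other hand, by the conditional-expectation property and the definition of $h_{\cM_1,\cM}$ in Equation \eqref{eqn:: the derivative between tau_M1 and tau_M} we have $\tau_{\cM_1}(\Delta a)=\tau_{\cM_1}\big(E^{\cM_1}_{\cM}(\Delta)a\big)=\tau_{\cM}\big(h_{\cM_1,\cM}E^{\cM_1}_{\cM}(\Delta)a\big)$. Comparing, $\tau_{\cM}\big((h_{\cM_1,\cM}E^{\cM_1}_{\cM}(\Delta)-1)a\big)=0$ for all $a\in Z(\cM)$, and since the element in front lies in $Z(\cM)$, faithfulness of $\tau_{\cM}$ forces $h_{\cM_1,\cM}E^{\cM_1}_{\cM}(\Delta)=1$, as desired.

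With the identity in hand, the two computations would close the proof: $w^{*}w=e_{\cM}$ and $ww^{*}=P$ show that $P$ is a projection in $\cM'\cap\cM_2$ equivalent to $e_{\cM}$. The only genuinely delicate point is the reduction to the centre: one must notice both that $Z(\cM)$ sits inside the relative commutant $\cM'\cap\cM_1$ on which $\Delta$ was characterized, and that conjugation by $J_{\cM}$ fixes $Z(\cM)$ pointwise; everything else is bookkeeping with the commutation relations among $h_{\cM_1,\cM}$, $\Delta$, $e_{\cM}$ and the standard pull-down identity. As a sanity check, in the extremal II$_1$ factor case $\Delta=1$ and $h_{\cM_1,\cM}=1$, so $w=e_{\cM}$ and $P=\delta^{-1}\widehat{id_{\cM}}=e_{\cM}$, recovering the statement trivially.
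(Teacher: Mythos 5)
Your proposal is correct and follows essentially the same route as the paper: your partial isometry $w=h_{\cM_1,\cM}^{1/2}\Delta^{1/2}e_{\cM}$ is exactly the paper's $y$ (the factors commute), and the key identity $E^{\cM_1}_{\cM}(\Delta)=h_{\cM_1,\cM}^{-1}$ is established by the same pairing against $Z(\cM)$ using $\tau_{\cM_1}(\Delta a)=\tau_{\cM}(J_{\cM}a^*J_{\cM})=\tau_{\cM}(a)$. The only (welcome) difference is that you make the Murray--von Neumann equivalence $ww^*\sim w^*w$ explicit where the paper leaves it implicit.
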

\begin{proof}
    By Lemma \ref{lemma:: Fourier multiplier of identity map}, we have
    \begin{align*}
    \delta^{-1}\Delta^{1/2}\widehat{id_{\cM}}\Delta^{1/2} = yy^*, \quad 
    y = \Delta^{1/2}h^{1/2}_{\cM_1,\cM}e_{\cM}.
    \end{align*}
    Therefore it suffices to show that $y^*y = E^{\cM_1}_{\cM}(\Delta)h_{\cM_1,\cM}e_{\cM}$ is a projection. 
    In deed, for any $x\in Z(\cM)$:
    \begin{align*}
        \tau_{\cM_1}(x\Delta) = \tau_{\cM}(Jx^*J) = \tau_{\cM}(x) = \tau_{\cM_1}(xh^{-1}_{\cM_1,\cM}), 
    \end{align*}
    which implies $E^{\cM_1}_{\cM}(\Delta) = h^{-1}_{\cM_1,\cM}$. 
    Therefore $y^*y = h^{-1}_{\cM_1,,\cM} h_{\cM_1,\cM} e_{\cM} = e_{\cM}$ is a projection. 
\end{proof}
\begin{definition}\label{def::Delta_0}
    Let $\cN\subset\cM$ be a finite inclusion of finite von Neumann algebras.
    We define $\Delta_0$ to be the positive operator $\in \cN'\cap \cM$ such that $\tau_{\cM}(\Delta_0\cdot) = \tau_{\cN'}$ as traces on $\cN'\cap\cM$. 
    Equivalently, we have $\Delta_0 = J_{\cM}\Delta^{-1} J_{\cM}$. 
\end{definition}
\begin{proposition}\label{prop:: explicit upperbound}
    Let $\cN\subset \cM$ and $\Delta_0$ be as above. 
    Then
    \begin{align*}
        \delta D_{\tau_{\cM_2}}(\Delta^{1/2}\widehat{id_{\cM}}\Delta^{1/2}\|\Delta^{1/2}\widehat{E_{\cN}}\Delta^{1/2}) = 2\log \delta+\tau_{\cM}(\log \Delta_0).
    \end{align*}
\end{proposition}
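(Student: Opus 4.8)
The plan is to reduce the relative entropy to explicit formulas for the two Fourier multipliers and then exploit the defining property of $\Delta$ from Lemma \ref{lemma::defining Delta}.

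First I would identify the two multipliers. By Lemma \ref{lemma:: Fourier multiplier of identity map} we have $\widehat{id_{\cM}} = \delta\, h_{\cM_1,\cM}e_{\cM}$. For $\widehat{E_{\cN}}$, I would substitute $\Phi = E_{\cN}$ into the basis formula \eqref{eqn:: multipler defined using a basis}: since each $E_{\cN}(\eta_j^*)\in\cN$ may be moved across the balanced tensor, the Pimsner-Popa relation $\sum_j\eta_j E_{\cN}(\eta_j^*)=1$ collapses the sum and gives $\widehat{E_{\cN}}=\delta^{-1}\,1$, the identity of $\cM'\cap\cM_2$ (consistency check: $\delta\,\omega_{\cN}(\widehat{E_{\cN}})=\tau_{\cM}(E_{\cN}(1))=1$, as required by Proposition \ref{prop:: characterize the state omega}(2)). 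Consequently the two arguments of the relative entropy become $\rho := \Delta^{1/2}\widehat{id_{\cM}}\Delta^{1/2} = \delta p$, where $p = \delta^{-1}\Delta^{1/2}\widehat{id_{\cM}}\Delta^{1/2}$ is the projection of Lemma \ref{lemma::the first term is a projection}, and $\sigma := \Delta^{1/2}\widehat{E_{\cN}}\Delta^{1/2} = \delta^{-1}\Delta$ (writing $\Delta$ for $\Delta\otimes_{\cN}1$, which is positive invertible so $\sigma$ has full support and the relative entropy is finite).

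Next I would expand $D_{\tau_{\cM_2}}(\rho\|\sigma) = \tau_{\cM_2}(\rho\log\rho) - \tau_{\cM_2}(\rho\log\sigma)$. As $p$ is a projection, $\rho\log\rho = \delta(\log\delta)p$, and since $p\sim e_{\cM}$ in the finite algebra $\cM_2$, $\tau_{\cM_2}(p)=\tau_{\cM_2}(e_{\cM})$; I would evaluate this via \eqref{eqn:: expectation of e_M} and \eqref{eqn:: the derivative between tau_M1 and tau_M}, namely $\tau_{\cM_2}(e_{\cM}) = \tau_{\cM_1}(E^{\cM_2}_{\cM_1}(e_{\cM})) = \delta^{-2}\tau_{\cM_1}(h^{-1}_{\cM_1,\cM}) = \delta^{-2}$. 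Using $\log\sigma = \log\Delta - (\log\delta)1$ gives $\tau_{\cM_2}(\rho\log\sigma) = \delta\,\tau_{\cM_2}(p\log\Delta) - \delta^{-1}\log\delta$. Collecting terms, $\delta D_{\tau_{\cM_2}}(\rho\|\sigma) = 2\log\delta - \delta^2\,\tau_{\cM_2}(p\log\Delta)$, so it remains to prove $\delta^2\,\tau_{\cM_2}(p\log\Delta) = -\tau_{\cM}(\log\Delta_0)$. For the cross term I would substitute $p = \Delta^{1/2}h_{\cM_1,\cM}e_{\cM}\Delta^{1/2}$, use that $h_{\cM_1,\cM}\in Z(\cM)$ commutes with $\Delta\in\cM'\cap\cM_1$, and cyclically rearrange to reach $\tau_{\cM_2}\big((\Delta\log\Delta\, h_{\cM_1,\cM})e_{\cM}\big)$. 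Since $\Delta\log\Delta\, h_{\cM_1,\cM}\in\cM_1$, the push-down $E^{\cM_2}_{\cM_1}(e_{\cM})=\delta^{-2}h^{-1}_{\cM_1,\cM}$ of \eqref{eqn:: expectation of e_M} yields $\tau_{\cM_2}(p\log\Delta) = \delta^{-2}\tau_{\cM_1}(\Delta\log\Delta)$. Finally I would apply the defining relation of $\Delta$ from Lemma \ref{lemma::defining Delta}, $\tau_{\cM_1}(\Delta z) = \tau_{\cM}(J_{\cM} z^* J_{\cM})$ for $z\in\cM'\cap\cM_1$, with the self-adjoint choice $z = \log\Delta$, giving $\tau_{\cM_1}(\Delta\log\Delta) = \tau_{\cM}(J_{\cM}\log\Delta\, J_{\cM})$; since $\Delta_0 = J_{\cM}\Delta^{-1}J_{\cM}$ we have $\log\Delta_0 = -J_{\cM}\log\Delta\, J_{\cM}$, so this equals $-\tau_{\cM}(\log\Delta_0)$. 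Substituting back gives $\delta^2\,\tau_{\cM_2}(p\log\Delta) = -\tau_{\cM}(\log\Delta_0)$ and hence the claimed identity.

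The main obstacle is the careful bookkeeping in the cross term: one must track which operators lie in $\cM$, in $Z(\cM)$, in $\cM'\cap\cM_1$, and in $\cM_1$, so that cyclic permutation under $\tau_{\cM_2}$ and the $e_{\cM}$ push-down are all legitimate, and one must apply the defining relation of $\Delta$ to the self-adjoint element $\log\Delta$ (possibly unbounded, hence understood via functional calculus and an approximation argument). This last step relies on $\Delta$ being positive and invertible, which is exactly the hypothesis that makes $\Delta_0$ and $\log\Delta_0$ meaningful in the statement; the remaining manipulations are routine functional calculus and trace identities.
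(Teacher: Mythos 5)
Your proposal is correct and follows essentially the same route as the paper: identify $\widehat{E_{\cN}}=\delta^{-1}$ and reduce the first argument to the projection of Lemma \ref{lemma::the first term is a projection}, split the relative entropy, evaluate the cross term by pushing down to $\cM_1$ via $E^{\cM_2}_{\cM_1}$ to reach $\tau_{\cM_1}(\Delta\log\Delta)$, and convert this to $-\tau_{\cM}(\log\Delta_0)$ using the defining relation of $\Delta$ and $\Delta_0=J_{\cM}\Delta^{-1}J_{\cM}$. The only cosmetic difference is that you push down $e_{\cM}$ explicitly through $h_{\cM_1,\cM}$ and Equation \eqref{eqn:: expectation of e_M}, whereas the paper applies $E^{\cM_2}_{\cM_1}(\delta^{-1}\widehat{id_{\cM}})=\delta^{-2}$ directly; these are equivalent.
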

\begin{proof}
    Firstly, we have $\widehat{E_{\cN}} = \delta^{-1}$. 
    So $\Delta^{1/2}\widehat{E_{\cN}}\Delta^{1/2} = \delta^{-1}\Delta$.
    By Lemma \ref{lemma::the first term is a projection},  the first term $e = \delta^{-1}\Delta^{1/2}\widehat{id_{\cM}}\Delta^{1/2}$ is a projection equivalent to $e_{\cM}$, so $\tau_{\cM_2} (e) = \tau_{\cM_2}(e_{\cM}) = \delta^{-2}$. 
    Now we have
    \begin{align*}
        \delta D_{\tau_{\cM_2}}(\Delta^{1/2}\widehat{id_{\cM}}\Delta^{1/2}\| \Delta^{1/2}\widehat{E_{\cN}}\Delta^{1/2}) &= \delta^2 D_{\tau_{\cM_2}}(e\vert \delta^{-2}\Delta)\\
        &= \delta^2 \tau_{\cM_2} (e\log e -e\log \delta^{-2}\Delta)\\
        &= -\delta^2 \tau_{\cM_2} (e\log \delta^{-2}\Delta)\\
        &= 2\log\delta - \delta^2\tau_{\cM_2} (e\log \Delta).
    \end{align*}
   Note that $\Delta\in\cM'\cap\cM_1$. We obtain
    \begin{align*}
        \delta^2\tau_{\cM_2} (e\log \Delta) &=  \delta^2\tau_{\cM_1} (E^{\cM_{2}}_{\cM_1}(e)\log \Delta)\\
        &= \delta^2\tau_{\cM_1} (E^{\cM_{2}}_{\cM_1}(\delta^{-1}\widehat{id_{\cM}})\Delta\log \Delta)\\
        &= \tau_{\cM_1}(\Delta\log \Delta).
    \end{align*}
    By the fact that $\Delta^{-1}_0 = J_{\cM}\Delta J_{\cM}$, we see
    \begin{align*}
\tau_{\cM'}(\log\Delta) = \tau_{\cM'}(J_{\cM}\log \Delta^{-1}_0J_{\cM}) = -\tau_{\cM}(\log\Delta_0).
    \end{align*}
    This completes the proof.
\end{proof}
Let $\{f_k\}_k$ be a set of atoms in $\cN'\cap\cM$. 
Then 
\begin{align*}
    \Delta_0 = \sum_k \frac{\tau_{\cN'}(f_k)}{\tau_{\cM}(f_k)}f_k,
\end{align*}
and 
\begin{align*}
    -\tau_{\cM}(\log\Delta_0) = \sum_k \tau_{\cM}(f_k)\log\frac{\tau_{\cM}(f_k)}{\tau_{\cN'}(f_k)}.
\end{align*}
Therefore we obtain
\begin{align*}
    2\log\delta + \tau_{\cM}(\log\Delta_0) = 2\log\delta - \sum_k \tau_{\cM}(f_k)\log\frac{\tau_{\cM}(f_k)}{\tau_{\cN'}(f_k)}.
\end{align*}
This precisely corresponds to the formula for $H(\cM\vert\cN)$ obtained by Pimsner and Popa when $\cN\subset\cM$ is subfactor. 
In particular, it means that if $\cN\subset \cM$ is a subfactor of finite index, then
\begin{align*}
    H(\cM\vert\cN) = \delta D_{\tau_{\cM_2}}(\Delta^{1/2}\widehat{id_{\cM}}\Delta^{1/2}\| \Delta^{1/2}\widehat{E_{\cN}}\Delta^{1/2}).
\end{align*}
This coincidence suggests us to look for the case of equality in Equation \eqref{eqn::upper bound for H}, 
which we explore in the next section. 

\section{The Downward Jones Basic Construction}
Our goal in the this section is to prove that the equality in Theorem \ref{thm::popaentropycompletepositive} does occur if we assume $\cN\subset\cM$ admits downward Jones basic construction. 

We say the inclusion $\cN\subset \cM$ admits a downward Jones basic construction if there exists a subalgebra $\cN_{-1}\subset\cN$ and a trace-preserving $*$-isomorphism $\alpha:\cM\to J_{\cN}\cN_{-1}'J_{\cN}$ such that $\alpha(\cN)$ becomes the standard representation of $\cN$ on $L^2({\cN})$. 
We denote the preimage of $e_{\cN_{-1}}$ as $e_{-1}\in\cM$ and call it Jones projection for $\cN_{-1}\subset\cN$. 
When $\cN\subset\cM$ admits a downward Jones basic construction, we will simply suppress the isomorphism $\alpha$ and identify $\cM$ with $J_{\cN}\cN_{-1}'J_{\cN}$. 
We use the symbol $\cN_{-1}\subset\cN\subset^{e_{-1}}\cM$ to indicate a downward Jones basic construction with Jones projection $e_{-1}$. 

Let $\cN_{-1}\subset\cN\subset^{e_{-1}}\cM$ be a downward Jones basic construction. Then it is known that all canonical traces (including the trace on $\cM$ as $\End(L^2(\cN)_{\cN_{-1}})$) induced by basic constructions are compatible. 
As a consequence, we restore the Temperley-Lieb relation between Jones projections:
\begin{align*}
    e_{\cN}e_{\cM}e_{\cN} = \delta^{-2}e_{\cN},\quad e_{-1}e_{\cN}e_{-1} = \delta^{-2}e_{-1}.
\end{align*}

Before going into the details, let us outline our proof strategy. 
We will leverage the Jones projection $e_{-1}$ to construct a sequence of partitions of unity in $\cM$ such that the sequence of associated completely positive trace-preserving maps approaches a $*$ homomorphism, which is closely related to the canonical shift introduced by Ocneanu \cite{Ocneanu1989}. 

Let $\cN_{-1}\subset\cN\subset^{e_{-1}}\cM$ be a downward Jones basic construction, then by iterating the Jones basic construction twice we get inclusions
\begin{align*}
    \cN_{-1}\subset\cN\subset^{e_{-1}}\cM\subset^{e_{\cN}}\cM_1\subset^{e_{\cM}}\cM_2,
\end{align*}
with $\cM_2 = \End (L^2(\cM_1)_{\cM})$ acting naturally on $L^2(\cM_1)$. 
Let $e^{\cM}_{\cN_{-1}}$ be the Jones projection on $L^2(\cM)$ with range $L^2(\cN_{-1})$. 
Then according to Theorem 2.6 of \cite{PP1988} (see also Proposition 2.1 in \cite{Bisch1997}), the map $\phi:\delta^2 x e_{\cN}e_{-1}e_{\cM}e_{\cN} y\mapsto xe^{\cM}_{\cN_{-1}}y$ where $x,y\in\cM$ extends to a $*$-isomorphism from $\cM_2$ to $J_{\cM}\cN'_{-1}J_{\cM}$. 
With this identification, the canonical shift $\gamma: \cN'_{-1}\cap\cM\to \cM'\cap\cM_2$ is defined as
\begin{equation}\label{eqn:: canonical shifts}
    \gamma(x) = J_{\cM}J_{\cN}xJ_{\cN}J_{\cM},\quad x\in  \cN'_{-1}\cap\cM.
\end{equation}
It is known that $\gamma$ is a $*$-isomorphism and that $\tau_{\cM_2}\circ\gamma = \tau_{\cM}$. 
For our purpose, we shall consider the inverse of the canonical shift. 
\begin{lemma}\label{lemma:: inverse of canonical shift}
    Let $\cN\subset\cM$ be a finite inclusion of finite von Neumann algebras that admits a  downward Jones basic construction $\cN_{-1}\subset\cN\subset^{e_{-1}}\cM$. 
    Let $\gamma: \cN'_{-1}\cap\cM\to \cM'\cap\cM_2$ be the canonical shift, then the following statements hold: 
    \begin{enumerate}
        \item  For any $x\in \cM'\cap\cM_2$, $\gamma^{-1}(x)$ is the unique element in $\cN_{-1}'\cap \cM$ such that 
        $$\gamma^{-1}(x)e_{\cM}=\delta^4 e_\cM e_\cN e_{-1}xe_{-1}e_\cN e_\cM;$$
        \item For any $\Phi\in \mathbf{CP}_{\cN}(\cM)$ so that $\widehat{\Phi}$ exists, 
        \begin{align*}
            \gamma^{-1}(\widehat{\Phi})=\delta\Phi(e_{-1}).
        \end{align*}
    \end{enumerate}
\end{lemma}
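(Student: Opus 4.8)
My overall plan is to read (1) as the statement that a certain explicit ``corner'' map inverts the canonical shift, and then to deduce (2) from (1) by a single concrete computation on the tracial vector. Throughout I would use the standing hypothesis only through the consequences recorded just before the lemma: the canonical traces are compatible, so the Temperley--Lieb relations hold, $\tau_{\cM_1}|_{\cM}=\tau_{\cM}$ (equivalently $h_{\cM_1,\cM}=1$), and $E^{\cM_1}_{\cM}(e_{\cN})=\delta^{-2}$.

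For (1) I would first record two elementary facts. Since $\cM_2=\langle\cM_1,e_{\cM}\rangle$ and $e_{\cM}ae_{\cM}=E^{\cM_1}_{\cM}(a)e_{\cM}$ for $a\in\cM_1$, one has the corner identity $e_{\cM}\cM_2 e_{\cM}=\cM e_{\cM}$; hence for every $x\in\cM'\cap\cM_2$ the element $\delta^4 e_{\cM}e_{\cN}e_{-1}xe_{-1}e_{\cN}e_{\cM}$ lies in $\cM e_{\cM}$ and equals $ze_{\cM}$ for some $z\in\cM$. Moreover $z\mapsto ze_{\cM}$ is injective on $\cM$, because $ze_{\cM}\Omega_{\cM_1}=z\Omega_{\cM_1}$ and $\Omega_{\cM_1}$ separates $\cM$. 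These two facts produce a well-defined map $\Theta:\cM'\cap\cM_2\to\cM$ determined by $\Theta(x)e_{\cM}=\delta^4 e_{\cM}e_{\cN}e_{-1}xe_{-1}e_{\cN}e_{\cM}$, and (1) is precisely the assertion $\Theta=\gamma^{-1}$ (in particular $\Theta$ takes values in $\cN_{-1}'\cap\cM$ and gives the claimed uniqueness).

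The heart of (1) is therefore the operator identity $\delta^4 e_{\cM}e_{\cN}e_{-1}\gamma(y)e_{-1}e_{\cN}e_{\cM}=ye_{\cM}$ for $y\in\cN_{-1}'\cap\cM$. By the two facts above it is enough to evaluate both sides on the separating vector $\Omega_{\cM_1}$. Using $e_{\cM}\Omega_{\cM_1}=\Omega_{\cM_1}$ and $e_{\cN}\Omega_{\cM_1}=\delta^{-1}\Omega\otimes_{\cN}\Omega$ from the identification of Section 2.3, everything reduces to the action of $\gamma(y)=J_{\cM}J_{\cN}yJ_{\cN}J_{\cM}$ on $\delta^{-1}e_{-1}\Omega\otimes_{\cN}\Omega$. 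This is the step I expect to be the main obstacle: one must turn $\gamma(y)$, presented through the two modular conjugations $J_{\cM},J_{\cN}$ and the Bisch--Pimsner--Popa isomorphism $\phi:\cM_2\xrightarrow{\sim}J_{\cM}\cN_{-1}'J_{\cM}$ with $\phi(\delta^2 e_{\cN}e_{-1}e_{\cM}e_{\cN})=e^{\cM}_{\cN_{-1}}$, into an operator that can be evaluated on this vector, and then collapse the resulting words in $e_{-1},e_{\cN},e_{\cM}$ by the Temperley--Lieb relations and trace compatibility. Reconciling the two standard forms and the two conjugations is the only genuinely delicate point; the remaining algebra is bookkeeping.

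For (2) I would apply (1) with $x=\widehat{\Phi}$: then $\gamma^{-1}(\widehat{\Phi})$ is the unique $z\in\cN_{-1}'\cap\cM$ with $ze_{\cM}=\delta^4 e_{\cM}e_{\cN}e_{-1}\widehat{\Phi}e_{-1}e_{\cN}e_{\cM}$, so it suffices to show $z=\delta\Phi(e_{-1})$. First, $\Phi(e_{-1})\in\cN_{-1}'\cap\cM$: for $n\in\cN_{-1}$, the $\cN$-bimodularity of $\Phi$ and the fact that $e_{-1}$ commutes with $\cN_{-1}$ give $n\Phi(e_{-1})=\Phi(ne_{-1})=\Phi(e_{-1}n)=\Phi(e_{-1})n$. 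Next I would evaluate $\delta^4 e_{\cM}e_{\cN}e_{-1}\widehat{\Phi}e_{-1}e_{\cN}e_{\cM}$ on $\Omega_{\cM_1}$ exactly as above, now using the explicit basis formula $\widehat{\Phi}(x\Omega\otimes_{\cN}y\Omega)=\delta^{-1}\sum_j x\eta_j\Omega\otimes_{\cN}\Phi(\eta_j^*)y\Omega$. This gives $\widehat{\Phi}(\delta^{-1}e_{-1}\Omega\otimes_{\cN}\Omega)=\delta^{-2}\sum_j e_{-1}\eta_j\Omega\otimes_{\cN}\Phi(\eta_j^*)\Omega$; the subsequent $e_{-1}$ acts as the identity on the left leg, while $e_{\cN}$ replaces $e_{-1}\eta_j$ by $E_{\cN}(e_{-1}\eta_j)\in\cN$, which crosses the $\cN$-balanced tensor to yield $\delta^{-2}\Omega\otimes_{\cN}\big(\sum_j E_{\cN}(e_{-1}\eta_j)\Phi(\eta_j^*)\big)\Omega$. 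By $\cN$-bimodularity and the Pimsner--Popa identity $\sum_j E_{\cN}(w\eta_j)\eta_j^*=w$ (the adjoint of $\sum_j\eta_j E_{\cN}(\eta_j^*x)=x$) with $w=e_{-1}$, the inner sum equals $\Phi(e_{-1})$. Converting back to $L^2(\cM_1)$ and applying the outer $e_{\cM}$ with $E^{\cM_1}_{\cM}(e_{\cN})=\delta^{-2}$ produces $\delta\Phi(e_{-1})\Omega_{\cM_1}$. Since $\gamma^{-1}(\widehat{\Phi})\in\cM$ and $\Omega_{\cM_1}$ separates $\cM$, comparing this with $\gamma^{-1}(\widehat{\Phi})\Omega_{\cM_1}=\big(\delta^4 e_{\cM}e_{\cN}e_{-1}\widehat{\Phi}e_{-1}e_{\cN}e_{\cM}\big)\Omega_{\cM_1}$ forces $\gamma^{-1}(\widehat{\Phi})=\delta\Phi(e_{-1})$.
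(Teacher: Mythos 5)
Your part (2) is essentially the paper's computation: you evaluate $\delta^4 e_{\cM}e_{\cN}e_{-1}\widehat{\Phi}e_{-1}e_{\cN}e_{\cM}$ on the separating tracial vector, use Equation \eqref{eqn:: multipler defined using a basis}, push $E_{\cN}(e_{-1}\eta_j)$ across the balanced tensor and into $\Phi$ by bimodularity, apply the adjoint basis relation $\sum_j E_{\cN}(e_{-1}\eta_j)\eta_j^*=e_{-1}$, and finish with $E^{\cM_1}_{\cM}(e_{\cN})=\delta^{-2}$. The powers of $\delta$ work out to $\delta\Phi(e_{-1})$, and the separating property of $\Omega_{\cM_1}$ closes the argument. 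That part is correct.

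Part (1), however, has a genuine gap. Your reductions are fine: the corner identity $e_{\cM}\cM_2 e_{\cM}=\cM e_{\cM}$, the injectivity of $z\mapsto ze_{\cM}$ on $\cM$, and the observation that everything therefore hinges on the single operator identity
\begin{align*}
\delta^4\, e_{\cM}e_{\cN}e_{-1}\,\gamma(y)\,e_{-1}e_{\cN}e_{\cM}=ye_{\cM},\qquad y\in\cN_{-1}'\cap\cM,
\end{align*}
are all correct. But you then describe this identity as ``the main obstacle,'' say that one must somehow convert $\gamma(y)=J_{\cM}J_{\cN}yJ_{\cN}J_{\cM}$ (transported through the isomorphism $\phi:\cM_2\to J_{\cM}\cN_{-1}'J_{\cM}$) into something computable, and dismiss the rest as ``bookkeeping'' --- without actually doing the conversion. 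That conversion \emph{is} the content of part (1); without it nothing has been proved. The paper supplies the missing ingredient: the explicit Jones-projection formula for the canonical shift (Theorem 2.11 of \cite{Bisch1997}),
\begin{align*}
\gamma(y)=\delta^4\sum_i \xi_i e_{-1}e_{\cN}e_{\cM}\, y\, e_{\cN}e_{-1}\xi_i^*,
\end{align*}
where $\{\xi_i\}$ is a Pimsner--Popa basis of $\cN$ over $\cN_{-1}$. Once $\gamma$ is in this form, substituting $ye_{\cM}=\delta^4 e_{\cM}e_{\cN}e_{-1}xe_{-1}e_{\cN}e_{\cM}$, collapsing $e_{-1}e_{\cN}e_{\cM}e_{\cN}e_{-1}=\delta^{-4}e_{-1}$ by the Temperley--Lieb relations, using that $x\in\cM'\cap\cM_2$ commutes with $e_{-1}\in\cM$, and invoking $\sum_i\xi_ie_{-1}\xi_i^*=1$ gives $\gamma(y)=x$ in three lines. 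You need either this formula or an equivalent substitute; evaluating $J_{\cM}J_{\cN}yJ_{\cN}J_{\cM}$ directly on a vector in $L^2(\cM)\otimes_{\cN}L^2(\cM)$ is not routine, since the two conjugations live on $L^2(\cM)$ while the operator is meant to act on $L^2(\cM_1)$, and reconciling these is exactly what Bisch's formula accomplishes. As written, part (1) --- and hence the lemma, since your part (2) relies on it --- is not established.
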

\begin{proof}
    (1): Let $x\in \cM'\cap \cM_2$ be acting on $L^2(\cM_1)$. 
    By viewing $L^2(\cM)$ as the image of $e_{\cM}$, we see $\delta^4 e_\mathcal{M}e_\mathcal{N}e_{-1}xe_{-1}e_\mathcal{N}e_\mathcal{M}$ is an 
    operator on $L^2(\cM)$ commuting with right action of $\cM$. This implies that $\delta^4 e_\mathcal{M}e_\mathcal{N}e_{-1}xe_{-1}e_\mathcal{N}e_\mathcal{M}=ye_{\cM}$ for some $y\in \cM$. Since all operators involved in the expression commute with $\cN_{-1}$, $y\in \cN_{-1}'\cap\cM$. 

    By Theorem 2.11 of \cite{Bisch1997} we know that the canonical shift has the following expression:
    \begin{align*}
        \gamma(y) = \delta^4\sum^n_i \xi_ie_{-1}e_{\cN}e_{\cM} y e_{\cN}e_{-1}\xi^*_i, \quad y\in \cN'_{-1}\cap\cM
    \end{align*}
    where $\{\xi_i\}^n_{i=1}$ is a basis of $\cN$ over $\cN_{-1}$. 
    So pick $x\in \cM'\cap\cM_2$ and let $y\in \cN'_{-1}\cap\cM$ be such that $ye_{\cM} = \delta^4 e_\cM e_\cN e_{-1}xe_{-1}e_\cN e_\cM$, then 
    \begin{align*}
        \gamma(y) &= \delta^4\sum^n_i \xi_ie_{-1}e_{\cN}\left(ye_{\cM}\right)e_{\cN}e_{-1}\xi^*_i\\
        &= \delta^8 \sum^n_i \xi_ie_{-1}e_{\cN}e_{\cM}e_{\cN}e_{-1} x e_{\cN}e_{\cM}e_{\cN}e_{-1}\xi^*_i\\
        &= \sum_{i}\xi_ie_{-1}\xi^*_i x =x.
    \end{align*}
    This proves that $y = \gamma^{-1}(x)$.\\
    (2): By definition $\widehat{\Phi}(e_{\cN}\Omega_{\cM_1})=\displaystyle \frac{1}{\delta}\sum_i\eta_ie_{\cN}\Phi(\eta^*_i)\Omega_{\cM_1}$ for any Pimsner-Popa basis $\{\eta_i\}^m_{i=1}$ of $\cM$ over $\cN$. 
    Thus by (1):
    \begin{align*}
        \gamma^{-1}(\widehat{\Phi}_0)e_{\cM} (m\Omega_{\cM_1}) &= \delta^4 e_{\cM} e_{\cN} e_{-1}\widehat{\Phi} e_{\cN}m\Omega_{\cM_1}\\
        &= \delta^3\sum_{i} e_{\cM} \left(e_{\cN} e_{-1}\eta_ie_{\cN}\Phi(\eta^*_i)\right) m\Omega_{\cM_1}\\
        &= \delta^3\sum_{i} e_{\cM} e_{\cN} \Phi(E_{\cN}(e_{-1}\eta_i)\eta^*_i) m\Omega_{\cM_1}\\
        &= \delta^3 e_{\cM}e_{\cN}\Phi(e_{-1})m\Omega_{\cM_1} = \delta \Phi(e_{-1})m\Omega_{\cM_1}, \quad \forall m\in\cM.
    \end{align*}
    Therefore $\gamma^{-1}(\widehat{\Phi}_0) = \delta \Phi(e_{-1})$.
\end{proof}
\begin{corollary}\label{corollary:: non-spherical}
    Let $\cN\subset \cM$ be a finite inclusion of finite von Neumann algebras, $\cN_{-1}\subset \cN\subset^{e_{-1}} \cM$ be a downward Jones basic construction. 
   Then the followings hold:
    \begin{enumerate}
        \item For any $x,y\in \cN_{-1}'\cap \cN$, $\delta^2 \tau_{\cM}(J_{\cN}x^*J_{\cN}e_{-1}y)=\tau_{\cN}(xy)$;
        \item $\delta^2 E^{\cM}_{\cN'\cap \cM}( e_{-1}) = \Delta_0$ (c.f. Definition \ref{def::Delta_0});
        \item $\delta^2 E^{\cM}_{\cN}(\Delta^{-1/2}_0e_{-1}\Delta^{-1/2}_0)=J_{\cN}\Delta^{-1}_0J_{\cN}$;
        \item $J_{\cN}\Delta^{-1}_0J_{\cN}e_{-1}=\Delta^{-1}_0e_{-1}$;
        \item $\gamma^{-1}(\Delta)=J_{\cN}\Delta^{-1}_0J_{\cN}$.
    \end{enumerate}
\end{corollary}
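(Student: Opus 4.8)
The plan is to reduce all five identities to a single computation on the range of the Jones projection $e_{-1}$, which is the subspace $\overline{\cN_{-1}\Omega_{\cN}}=L^2(\cN_{-1})\subset L^2(\cN)$. The central tool is the identity
\begin{align*}
    J_{\cN}a^*J_{\cN}\,e_{-1}=a\,e_{-1},\qquad a\in\cN_{-1}'\cap\cN,
\end{align*}
together with the $*$-anti-isomorphism $\Theta:\cN_{-1}'\cap\cN\to\cN'\cap\cM$, $\Theta(a)=J_{\cN}a^*J_{\cN}$, which matches up the two relative commutants. To prove the identity I would check it on $\overline{\cN_{-1}\Omega_{\cN}}$: for $n\in\cN_{-1}$ one computes $J_{\cN}a^*J_{\cN}(n\Omega_{\cN})=na\Omega_{\cN}$, while $a\,e_{-1}(n\Omega_{\cN})=an\Omega_{\cN}$, and these coincide precisely because $a$ commutes with $\cN_{-1}$; since both operators annihilate $(1-e_{-1})L^2(\cN)$, they agree. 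I would also record the two standard facts guaranteed by the trace compatibility of the downward tower $\cN_{-1}\subset\cN\subset^{e_{-1}}\cM$: the basic-construction formula $\tau_{\cM}(ue_{-1}v)=\delta^{-2}\tau_{\cN}(uv)$ for $u,v\in\cN$, and consequently $E^{\cM}_{\cN}(e_{-1})=\delta^{-2}$.

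Given these, (1) is immediate: the left side equals $\delta^2\tau_{\cM}(\Theta(x)e_{-1}y)=\delta^2\tau_{\cM}(xe_{-1}y)$ by the key identity, and the trace formula gives $\delta^2\cdot\delta^{-2}\tau_{\cN}(xy)=\tau_{\cN}(xy)$. For (4) and (3) I would set $b_0:=J_{\cN}\Delta_0^{-1}J_{\cN}$; since $\Delta_0^{-1}\in\cN'\cap\cM$ is positive, $b_0=\Theta^{-1}(\Delta_0^{-1})$ is a positive element of $\cN_{-1}'\cap\cN$ with $\Theta(b_0)=\Delta_0^{-1}$ and $\Theta(b_0^{1/2})=\Delta_0^{-1/2}$. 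The key identity then gives $\Delta_0^{-1}e_{-1}=b_0e_{-1}$, which is exactly (4) because $J_{\cN}\Delta_0^{-1}J_{\cN}=b_0$. For (3), the same identity and its adjoint yield $\Delta_0^{-1/2}e_{-1}\Delta_0^{-1/2}=b_0^{1/2}e_{-1}b_0^{1/2}$; pulling the $\cN$-elements $b_0^{1/2}$ through the conditional expectation and using $E^{\cM}_{\cN}(e_{-1})=\delta^{-2}$ gives $\delta^2E^{\cM}_{\cN}(\Delta_0^{-1/2}e_{-1}\Delta_0^{-1/2})=b_0=J_{\cN}\Delta_0^{-1}J_{\cN}$.

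For (2) I would verify that $\delta^2E^{\cM}_{\cN'\cap\cM}(e_{-1})$ satisfies the defining property of $\Delta_0$, i.e. $\tau_{\cM}(\Delta_0c)=\delta^2\tau_{\cM}(e_{-1}c)$ for every $c\in\cN'\cap\cM$; the left side equals $\tau_{\cN'}(c)$ by Definition \ref{def::Delta_0}, and since $c\in\cN'\cap\cM$ the trace-preserving conditional expectation gives $\tau_{\cM}(E^{\cM}_{\cN'\cap\cM}(e_{-1})c)=\tau_{\cM}(e_{-1}c)$. Writing $c=\Theta(x)$ with $x\in\cN_{-1}'\cap\cN$ (self-adjoint, which suffices by linearity), statement (1) with $y=1$ shows $\delta^2\tau_{\cM}(e_{-1}c)=\tau_{\cN}(x)$. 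For the other side I would invoke the canonical shift: $J_{\cM}cJ_{\cM}=\gamma(x)$ lands in $\cM'\cap\cM_1$, so
\begin{align*}
    \tau_{\cN'}(c)=\tau_{\cM_1}(\gamma(x))=\tau_{\cM_2}(\gamma(x))=\tau_{\cM}(x)=\tau_{\cN}(x),
\end{align*}
using the compatibility of the canonical traces and $\tau_{\cM_2}\circ\gamma=\tau_{\cM}$. Both sides agree, proving (2). Finally (5) follows formally: from $\Delta_0=J_{\cM}\Delta^{-1}J_{\cM}$ we get $\Delta=J_{\cM}\Delta_0^{-1}J_{\cM}$, so the canonical shift formula \eqref{eqn:: canonical shifts} yields
\begin{align*}
    \gamma^{-1}(\Delta)=J_{\cN}J_{\cM}\,\Delta\,J_{\cM}J_{\cN}=J_{\cN}\Delta_0^{-1}J_{\cN}.
\end{align*}

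The hard part will be the bookkeeping across the three Hilbert spaces $L^2(\cN)$, $L^2(\cM)$ and $L^2(\cM_1)$ together with their modular conjugations $J_{\cN}$ and $J_{\cM}$: one must track which representation each of $e_{-1}$, $\Delta$, $\Delta_0$ and $\gamma(x)$ lives in, and invoke the trace compatibility of the downward tower at every step. In particular the chain for (2) relies on the fact that $\gamma(x)$ actually lands in $\cM'\cap\cM_1$, rather than merely $\cM'\cap\cM_2$, when $x\in\cN_{-1}'\cap\cN$, which is precisely what makes $\tau_{\cM_1}$ and the trace compatibility applicable; I would isolate this point as a short lemma to keep the main argument clean.
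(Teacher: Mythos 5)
Your proposal is correct and follows the same overall skeleton as the paper: establish (1) as the basic duality between $\cN_{-1}'\cap\cN$ and $\cN'\cap\cM$, then deduce (2)--(5). The differences are in how individual steps are executed. For (1) the paper computes $\delta^2\tau_{\cM}(J_{\cN}x^*J_{\cN}e_{-1}y)$ directly with a Pimsner--Popa basis of $\cN$ over $\cN_{-1}$, whereas you factor that computation into the commutation identity $J_{\cN}a^*J_{\cN}e_{-1}=ae_{-1}$ for $a\in\cN_{-1}'\cap\cN$ plus the downward trace formula $\tau_{\cM}(ue_{-1}v)=\delta^{-2}\tau_{\cN}(uv)$; the same identity then gives (4) at once (the paper's one-line justification of (4) is exactly this fact), and for (3) it lets you pull $J_{\cN}\Delta_0^{-1/2}J_{\cN}$ through $E^{\cM}_{\cN}$ using the bimodule property and $E^{\cM}_{\cN}(e_{-1})=\delta^{-2}$, instead of the paper's duality argument against test elements $y\in\cN_{-1}'\cap\cN$ --- both are valid. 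The most substantive divergence is in (2): the paper shows $\delta^2E^{\cM}_{\cN'\cap\cM}(e_{-1})$ is the Radon--Nikodym derivative of the trace $\tau_{J_{\cN}\cN J_{\cN}}$ and identifies that trace with $\tau_{\cN'}$ implicitly via the global compatibility of canonical traces, while you derive the identification explicitly through the canonical shift, using $\gamma(\cN_{-1}'\cap\cN)\subset\cM'\cap\cM_1$, $\tau_{\cM_2}|_{\cM_1}=\tau_{\cM_1}$ and $\tau_{\cM_2}\circ\gamma=\tau_{\cM}$; this makes fully explicit the point where $\tau_{\cN'}$ (defined on $L^2(\cM)$ via $\cM_1$) meets the downward picture, and your restriction to self-adjoint $x$ (so that $J_{\cM}cJ_{\cM}=\gamma(x)$ rather than $\gamma(x^*)$) is harmless since both sides of the identity being verified are linear in $c$. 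I see no gaps; your version arguably documents the trace bookkeeping more carefully than the paper's.
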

\begin{proof}
    (1): Choose a Pimsner-Popa basis $\{\xi_j\}_j$ of $\cN$ over $\cN_{-1}$ and identify $\cM$ as $J_{\cN}\cN'_{-1}J_{\cN}$. We have
    \begin{align*}
        \delta^2 \tau_{\cM}(J_{\cN}x^*J_{\cN}e_{-1}y)&=\sum_j\left\langle J_{\cN}x^*J_{\cN}E_{\cN_{-1}}(y\xi_j)\Omega,\xi_j\Omega\right\rangle\\
        &=\sum_j\left\langle E_{\cN_{-1}}(y\xi_j)x\Omega,\xi_j\Omega\right\rangle\\
        &=\sum_j\left\langle xE_{\cN_{-1}}(y\xi_j)\Omega,\xi_j\Omega\right\rangle\quad(\text{since }x\in \cN'_{-1})\\
        &=\sum_j\left\langle xE_{\cN_{-1}}(y\xi_j)\xi^*_j\Omega,\Omega\right\rangle\\
        &=\tau_{\cN}(xy).
    \end{align*}
    (2): By (1), we have that for any $x\in J_{\cN}\cN J_{\cN}\cap \cM$, 
    \begin{align*}
        \delta^2 \tau_{\cM}\left(x E^{\cM}_{\cN' \cap \cM}(e_{-1})\right) &= \delta^2 \tau_{\cM}(xe_{-1})\\
        &=\tau_{\cN}(J_{\cN} x^*J_{\cN})\\
        &=\tau_{J_{\cN} \cN J_{\cN}}(x)
    \end{align*}
    This implies that $\delta^2 E^{\cM}_{\cN' \cap \cM}(e_{-1}) \in\cN'\cap\cM$ is the Radon-Nikodym derivative between $\tau_{\cM}$ and $\tau_{J_{\cN}\cN J_{\cN}}$.\\
    (3): We see that $\delta^2 E_{\cN}(\Delta^{-1/2}_0e_{-1}\Delta^{-1/2}_0)\in \cN_{-1}'\cap \cN$, and for all $y\in \cN_{-1}'\cap \cN$, 
    \begin{align*}
        &\delta^2 \tau_{\cM}(\Delta_0^{-1/2}e_{-1}\Delta_0^{-1/2}y)\\
        &=\delta^2 \tau_{\cM}(\Delta_0^{-1}e_{-1}y)\\
        &=\tau_N(J_{\cN} \Delta^{-1}_0 J_{\cN} y) \quad \text{followed from (1)}.
    \end{align*}
    Then $\delta^2 E^{\cM}_{\cN}(\Delta^{-1/2}_0e_{-1}\Delta^{-1/2}_0)=J_{\cN}\Delta^{-1}_0J_{\cN}$.\\
    (4): This follows from the fact that $J_{\cN}\Delta^{-1}_0J_{\cN}$ and $\Delta^{-1}_0$ commute with $\cN_{-1}$.\\
    (5): By Definition \ref{def::Delta_0}, we see $\Delta^{-1}_0 = J_{\cM}\Delta J_{\cM}$.
    Therefore
    \begin{align*}
        J_{\cN}\Delta^{-1}_0 J_{\cN} = J_{\cN}J_{\cM}\Delta J_{\cM}J_{\cN} = \gamma^{-1}(\Delta),
    \end{align*}
    completing the proof. 
\end{proof}

\begin{theorem}\label{thm::Formula for entropy between bimodule CP maps}
    Let $\mathcal{N}\subset\mathcal{M}$ be a finite inclusion of finite von Neumann algebras and
    $\Phi\preccurlyeq \Psi:\cM\to\cM$ are completely positive $\cN$-bimodule maps.
    If the inclusion admits a downward Jones basic construction $\cN_{-1}\subset \cN\subset^{e_{-1}} \cM$, then
    \begin{align}\label{eqn::entropy in terms of multiplier}
        H(\Phi|\Psi) = \delta D_{\tau_{\cM_2}}(\Delta^{1/2}\widehat{\Phi}\Delta^{1/2}| \Delta^{1/2}\widehat{\Psi}\Delta^{1/2}).
    \end{align}
\end{theorem}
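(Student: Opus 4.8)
The upper bound is exactly Theorem \ref{thm::popaentropycompletepositive}, so the content of the claim is the reverse inequality
\begin{align*}
    H(\Phi|\Psi)\geq \delta D_{\tau_{\cM_2}}(\Delta^{1/2}\widehat{\Phi}\Delta^{1/2}\|\Delta^{1/2}\widehat{\Psi}\Delta^{1/2}).
\end{align*}
The plan is to first rewrite the right hand side through the canonical shift, and then to exhibit a sequence of partitions of unity whose associated trace-preserving completely positive maps converge to a $*$-homomorphism, so that the data processing loss incurred in the proof of Theorem \ref{thm::popaentropycompletepositive} disappears in the limit.

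First I would transport the right hand side to $\cM$ via the canonical shift. Since $\gamma:\cN'_{-1}\cap\cM\to\cM'\cap\cM_2$ is a trace-preserving $*$-isomorphism ($\tau_{\cM_2}\circ\gamma=\tau_{\cM}$), its inverse preserves relative entropy. Because $\Delta^{1/2}\widehat{\Phi}\Delta^{1/2},\ \Delta^{1/2}\widehat{\Psi}\Delta^{1/2}\in\cM'\cap\cM_2$, applying Lemma \ref{lemma:: inverse of canonical shift}(2) and Corollary \ref{corollary:: non-spherical}(5) gives $\gamma^{-1}(\Delta^{1/2}\widehat{\Phi}\Delta^{1/2})=\delta\,k\Phi(e_{-1})k$, where $k:=J_{\cN}\Delta_0^{-1/2}J_{\cN}\in\cN'_{-1}\cap\cN$, and likewise for $\Psi$. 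Since $k\in\cN$ and $\Phi,\Psi$ are $\cN$-bimodule maps, this is $\delta\Phi(ke_{-1}k)$, so that
\begin{align*}
    \delta D_{\tau_{\cM_2}}(\Delta^{1/2}\widehat{\Phi}\Delta^{1/2}\|\Delta^{1/2}\widehat{\Psi}\Delta^{1/2})=\delta^2 D_{\tau_{\cM}}(\Phi(ke_{-1}k)\|\Psi(ke_{-1}k)).
\end{align*}
A short computation using Corollary \ref{corollary:: non-spherical}(2),(4) gives $\tau_{\cM}(\delta^2 ke_{-1}k)=1$, so the positive element $\delta^2 ke_{-1}k$ is never dominated by $1_{\cM}$; no single partition element can realize it, which is precisely why a limiting family of partitions is forced upon us.

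Next I would build the partitions. Using a Pimsner-Popa basis $\{\xi_i\}$ of $\cN$ over $\cN_{-1}$ together with the Jones projection $e_{-1}$, and refining by an approximate system of matrix units in $\cN$, I would produce partitions $\mathbf{x}^{(k)}\in\mathbf{S}$ whose associated trace-preserving completely positive map $T_k(A)=\big(v_{\cN}^*x_j^{(k)}\Delta^{-1/2}A\Delta^{-1/2}v_{\cN}\big)_j$ from the proof of Theorem \ref{thm::popaentropycompletepositive} converges, on the relevant operators, to a trace-preserving $*$-homomorphism mirroring the inverse canonical shift $\gamma^{-1}$. The guiding identity is that $T_k(\Delta^{1/2}\widehat{\Phi}\Delta^{1/2})=(\delta^{-1}\Phi(x_j^{(k)}))_j$ recovers $\Phi(ke_{-1}k)=\delta^{-1}\gamma^{-1}(\widehat{\Phi})$ once the partition concentrates around $e_{-1}$; the Temperley-Lieb relations $e_{\cN}e_{\cM}e_{\cN}=\delta^{-2}e_{\cN}$ and $e_{-1}e_{\cN}e_{-1}=\delta^{-2}e_{-1}$ restored by the downward basic construction, together with the computations of Corollary \ref{corollary:: non-spherical}, are what force multiplicativity in the limit. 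The hypothesis $\Phi\preccurlyeq\Psi$ guarantees, via Corollary \ref{corollary:: comparison of Fourier multiplier}, that $\mathbf{supp}\,\widehat{\Phi}\le\mathbf{supp}\,\widehat{\Psi}$, keeping all relative entropies finite.

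Finally I would assemble the bounds. A trace-preserving $*$-homomorphism saturates the data processing inequality and hence preserves relative entropy; so by lower semicontinuity of the relative entropy the liminf of the partition sums $\sum_j D_{\tau_{\cM}}(\Phi(x_j^{(k)})\|\Psi(x_j^{(k)}))=\delta D_{Tr}\big(T_k(\Delta^{1/2}\widehat{\Phi}\Delta^{1/2})\|T_k(\Delta^{1/2}\widehat{\Psi}\Delta^{1/2})\big)$ is at least $\delta^2 D_{\tau_{\cM}}(\Phi(ke_{-1}k)\|\Psi(ke_{-1}k))$, which is the right hand side. Since every such sum is bounded above by $H(\Phi|\Psi)$, which in turn is bounded above by the right hand side by Theorem \ref{thm::popaentropycompletepositive}, all three quantities coincide and the claimed equality follows. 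The main obstacle is the construction of the partition family and the proof that the induced maps converge to a genuine $*$-homomorphism: one must verify asymptotic multiplicativity and trace preservation simultaneously, which is exactly where the downward basic construction (through $e_{-1}$ and the canonical shift) is indispensable, and where care with the supports and with the interchange of limit and relative entropy is required.
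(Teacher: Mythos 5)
Your reduction to the reverse inequality and your first step---transporting the right hand side to $\cM$ via the trace-preserving isomorphism $\gamma^{-1}$, using Lemma \ref{lemma:: inverse of canonical shift}(2) and Corollary \ref{corollary:: non-spherical}(5) to get $\gamma^{-1}(\Delta^{1/2}\widehat{\Phi}\Delta^{1/2})=\delta\,\Phi(\Delta_0^{-1/2}e_{-1}\Delta_0^{-1/2})$---is correct and is exactly what the paper does (it is recorded separately as Theorem \ref{thm:: formula for H(M|N) in terms of Jones projection}). The problem is that the heart of the argument, the construction of the partitions of unity, is not actually carried out: you say you ``would produce'' partitions from a Pimsner--Popa basis of $\cN$ over $\cN_{-1}$ refined by approximate matrix units, such that the induced maps $T_k$ converge to a $*$-homomorphism, and you yourself flag asymptotic multiplicativity as the main unresolved obstacle. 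As written this is a plan, not a proof, and the route you sketch (limits of $*$-homomorphisms, saturation of data processing, lower semicontinuity and an interchange of limit with relative entropy) is harder than what is needed and is not obviously realizable from the ingredients you name.

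The missing idea is the relative Dixmier property for finite inclusions \cite{Popa1999}, combined with Corollary \ref{corollary:: non-spherical}(2), which gives $\delta^2 E^{\cM}_{\cN'\cap\cM}(\Delta_0^{-1/2}e_{-1}\Delta_0^{-1/2})=1$. This lets one choose, for each $\epsilon>0$, finitely many unitaries $u_1,\dots,u_n\in\cN$ with
\begin{align*}
\frac{1-\epsilon}{1+\epsilon}\;\leq\;\frac{\delta^2}{n(1+\epsilon)}\sum_{k=1}^{n}u_k\Delta_0^{-1/2}e_{-1}\Delta_0^{-1/2}u_k^*\;\leq\;1,
\end{align*}
and to take $x_k=\frac{\delta^2}{n(1+\epsilon)}u_k\Delta_0^{-1/2}e_{-1}\Delta_0^{-1/2}u_k^*$ together with the remainder $x_{n+1}=1-\sum_k x_k$ as the partition. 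No limit of $*$-homomorphisms is needed: since $\Phi,\Psi$ are $\cN$-bimodule maps, $\Phi(x_k)=\frac{\delta^2}{n(1+\epsilon)}u_k\Phi(\Delta_0^{-1/2}e_{-1}\Delta_0^{-1/2})u_k^*$, and by unitary invariance and homogeneity of $D_{\tau_{\cM}}(\cdot\|\cdot)$ each of the $n$ summands contributes exactly $\frac{1}{n(1+\epsilon)}$ of the target quantity $\delta^2 D_{\tau_{\cM}}(\Phi(\Delta_0^{-1/2}e_{-1}\Delta_0^{-1/2})\|\Psi(\Delta_0^{-1/2}e_{-1}\Delta_0^{-1/2}))$, while the $(n+1)$-st term is nonnegative. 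Summing and letting $\epsilon\to 0$ gives the reverse inequality directly. Without this (or an equivalent averaging device) your proposal does not close.
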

\begin{proof}
    We begin by constructing the partition of unity. 
    Let $e_{-1}\in\cM$ be a Jones projection. 
    By (2) of Corollary \ref{corollary:: non-spherical}, we have $\delta^2E^{\cM}_{\cN'\cap \cM}(\Delta^{-1/2}_0 e_{-1}\Delta^{-1/2}_0)=1$. 
    Then by the relative Dixmier property for finite inclusions \cite{Popa1999}, for each $\epsilon>0$ we can take a set of $n$ unitaries $\{u_k\}^n_{k=1 }$ in $\cN$ such that
    \begin{align*}
        \frac{1-\epsilon}{1+\epsilon}\leq\frac{\delta^2 }{n(1+\epsilon)}\sum^{n}_{k=1}u_k\Delta^{-1/2}_0e_{-1}\Delta^{-1/2}_0u^*_k\leq 1.
    \end{align*}
    Put $\displaystyle x_k=\frac{\delta^2 }{n(1+\epsilon)}u_k\Delta^{-1/2}_0e_{-1}\Delta^{-1/2}_0u^*_k$ for $1\leq k\leq n$ and 
    $x_{n+1}=1-\sum^n_{k=1}x_k$. 
    Let $T:\cM'\cap\cM_2\to\cM$ be the completely positive trace-preserving map constructed in the proof of Theorem \ref{thm::popaentropycompletepositive} associated to the partition $\{x_{k}\}^{n+1}_{k=1}$, then $ \delta T(\Delta^{1/2}\widehat{\Phi}\Delta^{1/2}) = \big(y_k \big)^{n+1}_{k=1}$ where 
    \begin{align*}
        y_k = \frac{\delta^2}{n(1+\epsilon)}\Phi\left(u_k\Delta^{-1/2}_0e_{-1}\Delta^{-1/2}_0u^*_k\right),y_{n+1} = \Phi(x_{n+1}),\quad 1\leq  k\leq n.
    \end{align*}
    For each $1\leq k\leq n$, we have
    \begin{align*}
        y_k &= \frac{\delta^2}{n(1+\epsilon)}\Phi\left(u_k\Delta^{-1/2}_0e_{-1}\Delta^{-1/2}_0u^*_k\right)\\
        &= \frac{\delta^2}{n(1+\epsilon)}u_k\Phi\left(J_{\cN}\Delta^{-1/2}_0J_{\cN}e_{-1}J_{\cN}\Delta^{-1/2}_0J_{\cN}\right)u^*_k \\
        &= \frac{\delta^2}{n(1+\epsilon)}u_k\gamma^{-1}(\Delta^{1/2})\Phi(e_{-1})\gamma^{-1}(\Delta^{1/2})u^*_k\\
        &= \frac{\delta}{n(1+\epsilon)}u_k\gamma^{-1}(\Delta^{1/2}\widehat{\Phi}\Delta^{1/2})u^*_k ,
    \end{align*}
    where in the third and the last equality we used Corollary \ref{corollary:: non-spherical} and Lemma \ref{lemma:: inverse of canonical shift}. 
    Therefore
    \begin{align*}
	    H(\Phi|\Psi)&\geq \delta D(T(\Delta^{1/2}\widehat{\Phi}\Delta^{1/2})\|T(\Delta^{1/2}\widehat{\Psi}\Delta^{1/2}))\\
	    &\geq \frac{\delta}{n(1+\epsilon)}\sum^n_{k=1}D_{\tau_{\cM}}\left(u_k\gamma^{-1}(\Delta^{1/2}\widehat{\Phi}\Delta^{1/2})u^*_k\|u_k\gamma^{-1}(\Delta^{1/2}\widehat{\Psi}\Delta^{1/2})u^*_k\right)\\
	    &=\frac{\delta}{1+\epsilon}D_{\tau_{\cM_2}}(\Delta^{1/2}\widehat{\Phi}\Delta^{1/2}\| \Delta^{1/2}\widehat{\Psi}\Delta^{1/2}).
    \end{align*}
    Note that in the las equality we used $\tau_{\cM}\circ\gamma^{-1}=\tau_{\cM_2}$. 
    Now taking $\epsilon\rightarrow 0$, we see that the theorem is true. 
\end{proof}

\begin{theorem}\label{thm:: formula for H(M|N) in terms of Jones projection}
    Let $\mathcal{N}\subset\mathcal{M}$ be a finite inclusion of finite von Neumann algebras with a downward Jones basic construction $\cN_{-1}\subset\cN\subset^{e_{-1}}\cM$. 
    Then for any $\Phi,\Psi\in \mathbf{CP}_{\cN}(\cM)$ with $\Phi\preccurlyeq \Psi$,
\begin{align*}
H(\Phi|\Psi)=\delta^2 D_{\tau_{\cM}}(\Phi(\Delta_0^{-1/2}e_{-1}\Delta_0^{-1/2})\|\Phi(\Delta_0^{-1/2}e_{-1}\Delta_0^{-1/2})).
\end{align*}
\end{theorem}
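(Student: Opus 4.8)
The plan is to transport the formula already proved in Theorem~\ref{thm::Formula for entropy between bimodule CP maps} through the inverse canonical shift $\gamma^{-1}$, which carries operators of $\cM'\cap\cM_2$ to the explicit expressions living in $\cN_{-1}'\cap\cM$. Recall that Theorem~\ref{thm::Formula for entropy between bimodule CP maps} gives $H(\Phi|\Psi)=\delta D_{\tau_{\cM_2}}(\Delta^{1/2}\widehat{\Phi}\Delta^{1/2}\|\Delta^{1/2}\widehat{\Psi}\Delta^{1/2})$, with $\Delta^{1/2}\widehat{\Phi}\Delta^{1/2},\ \Delta^{1/2}\widehat{\Psi}\Delta^{1/2}\in\cM'\cap\cM_2$. (The displayed statement contains a typographical slip: the second entry of the relative entropy should read $\Psi(\Delta_0^{-1/2}e_{-1}\Delta_0^{-1/2})$, which is what I will prove.)

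First I would record the key identity
$$\gamma^{-1}(\Delta^{1/2}\widehat{\Phi}\Delta^{1/2})=\delta\,\Phi(\Delta_0^{-1/2}e_{-1}\Delta_0^{-1/2}),$$
valid for every $\Phi\in\mathbf{CP}_{\cN}(\cM)$. This is essentially the computation of the $y_k$ carried out inside the proof of Theorem~\ref{thm::Formula for entropy between bimodule CP maps}, stripped of the averaging unitaries. Since $\gamma^{-1}$ is a $*$-isomorphism, $\gamma^{-1}(\Delta^{1/2}\widehat{\Phi}\Delta^{1/2})=\gamma^{-1}(\Delta^{1/2})\,\gamma^{-1}(\widehat{\Phi})\,\gamma^{-1}(\Delta^{1/2})$, and I would substitute $\gamma^{-1}(\widehat{\Phi})=\delta\Phi(e_{-1})$ from Lemma~\ref{lemma:: inverse of canonical shift}(2) together with $\gamma^{-1}(\Delta^{1/2})=J_{\cN}\Delta_0^{-1/2}J_{\cN}$ from Corollary~\ref{corollary:: non-spherical}(5). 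Because $\Delta_0\in\cN'\cap\cM$ one checks $J_{\cN}\Delta_0^{-1/2}J_{\cN}\in\cN_{-1}'\cap\cN\subset\cN$, so the $\cN$-bimodule property of $\Phi$ permits absorbing these factors into $\Phi$, while Corollary~\ref{corollary:: non-spherical}(4) rewrites $J_{\cN}\Delta_0^{-1/2}J_{\cN}\,e_{-1}\,J_{\cN}\Delta_0^{-1/2}J_{\cN}$ as $\Delta_0^{-1/2}e_{-1}\Delta_0^{-1/2}$, giving the claim.

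Next I would invoke two general features of relative entropy. Since $\gamma$ is a $*$-isomorphism with $\tau_{\cM_2}\circ\gamma=\tau_{\cM}$, equivalently $\tau_{\cM}\circ\gamma^{-1}=\tau_{\cM_2}$, and since the logarithm is computed by intrinsic functional calculus, relative entropy is invariant under $\gamma^{-1}$, so that $D_{\tau_{\cM_2}}(\Delta^{1/2}\widehat{\Phi}\Delta^{1/2}\|\Delta^{1/2}\widehat{\Psi}\Delta^{1/2})=D_{\tau_{\cM}}(\gamma^{-1}(\Delta^{1/2}\widehat{\Phi}\Delta^{1/2})\|\gamma^{-1}(\Delta^{1/2}\widehat{\Psi}\Delta^{1/2}))$. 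Applying the key identity to both $\Phi$ and $\Psi$ turns the two arguments into $\delta\Phi(\Delta_0^{-1/2}e_{-1}\Delta_0^{-1/2})$ and $\delta\Psi(\Delta_0^{-1/2}e_{-1}\Delta_0^{-1/2})$. Finally the positive homogeneity $D_{\tau_{\cM}}(\delta\rho\|\delta\sigma)=\delta D_{\tau_{\cM}}(\rho\|\sigma)$ (the scalar $\log\delta$ contributions cancel between the two logarithms) extracts one factor of $\delta$, and combined with the leading $\delta$ from Theorem~\ref{thm::Formula for entropy between bimodule CP maps} this yields $H(\Phi|\Psi)=\delta^2 D_{\tau_{\cM}}(\Phi(\Delta_0^{-1/2}e_{-1}\Delta_0^{-1/2})\|\Psi(\Delta_0^{-1/2}e_{-1}\Delta_0^{-1/2}))$.

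I expect the only genuine subtlety, everything else being bookkeeping assembled from results already proved, to be the half-power upgrade of Corollary~\ref{corollary:: non-spherical}(4) needed to split $\Delta_0^{-1}e_{-1}$ symmetrically as $\Delta_0^{-1/2}e_{-1}\Delta_0^{-1/2}=J_{\cN}\Delta_0^{-1/2}J_{\cN}\,e_{-1}\,J_{\cN}\Delta_0^{-1/2}J_{\cN}$; this holds because both $\Delta_0^{-1}$ and $J_{\cN}\Delta_0^{-1}J_{\cN}$ commute with $\cN_{-1}$, so their commutation relations with the Jones projection $e_{-1}$ persist for every continuous function of $\Delta_0$. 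One should also confirm that the right-hand side is finite, which follows from $\Phi\preccurlyeq\Psi$ via $\mathbf{supp}\,\widehat{\Phi}\le\mathbf{supp}\,\widehat{\Psi}$ (Corollary~\ref{corollary:: comparison of Fourier multiplier}) transported by the invertible $\Delta$ and by the $*$-isomorphism $\gamma^{-1}$.
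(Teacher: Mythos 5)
Your proposal is correct and follows essentially the same route as the paper: apply the trace-preserving $*$-isomorphism $\gamma^{-1}$ to the formula of Theorem \ref{thm::Formula for entropy between bimodule CP maps}, substitute $\gamma^{-1}(\widehat{\Phi})=\delta\Phi(e_{-1})$ and $\gamma^{-1}(\Delta^{1/2})=J_{\cN}\Delta_0^{-1/2}J_{\cN}$ from Lemma \ref{lemma:: inverse of canonical shift} and Corollary \ref{corollary:: non-spherical}, and absorb the factors via the $\cN$-bimodule property. You are also right that the second argument in the displayed statement should be $\Psi(\Delta_0^{-1/2}e_{-1}\Delta_0^{-1/2})$, and your explicit treatment of the half-power version of Corollary \ref{corollary:: non-spherical}(4) and of the sign of the exponent of $\Delta_0$ fills in details the paper leaves implicit.
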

\begin{proof} 
    Applying $\gamma^{-1}$ to the right hand side of Equation \eqref{eqn::entropy in terms of multiplier} and by $\tau_{\cM}\circ\gamma^{-1}=\tau_{\cM_2}$ we obtain:
    \begin{align*}
        &H(\Phi|\Psi)\\
        &=\delta D_{\tau_{\cM_2}}(\Delta^{1/2}\widehat{\Phi}\Delta^{1/2}\| \Delta^{1/2}\widehat{\Psi}\Delta^{1/2})\\
        &= 
        \delta D_{\tau_{\cM}} \left(\gamma^{-1}(\Delta^{1/2}\widehat{\Phi}\Delta^{1/2})\|\gamma^{-1}(\Delta^{1/2}\widehat{\Phi}\Delta^{1/2})\right)\\
        &= \delta D_{\tau_{\cM}} \left((J_{\cN}\Delta^{-1/2}_0J_{\cN})\gamma^{-1}(\widehat{\Phi})(J_{\cN}\Delta^{-1/2}_0J_{\cN})\|(J_{\cN}\Delta^{-1/2}_0J_{\cN})\gamma^{-1}(\widehat{\Psi})(J_{\cN}\Delta^{-1/2}_0J_{\cN})\right)\\
        &= \delta^2 D_{\tau_{\cM}} \left( \Phi(\Delta^{1/2}_0e_{-1}\Delta^{1/2}_0)\| \Psi(\Delta^{1/2}_0e_{-1}\Delta^{1/2}_0)\right),
    \end{align*}
    Notice that the last equality is implied by Lemma \ref{lemma:: inverse of canonical shift} and Corollary \ref{corollary:: non-spherical}. 
\end{proof}
We now consider an inclusion $\cN\subset\cM$ of finite dimensional $C^*$ algebras and try to determine the necessary and sufficient condition for equality
\begin{align*}
    H(\cM\vert\cN) = \delta D_{\tau_{\cM_2}}(\Delta^{1/2}\widehat{id_{\cM}}\Delta^{1/2}| \Delta^{1/2}\widehat{E_{\cN}}\Delta^{1/2})
\end{align*} 
We start with the computation of the second term, using the formula obtained in Section 3.2.

We adopt the notations from Section 6 of \cite{PP1986}. 
Let $K$ and $L$ be two index sets of finite cardinals. 
$\cN$ and $\cM$ will be described as 
\begin{align*}
    \cN = \bigoplus_{k\in K}M_{n_k}(\mathbb{C}),\quad  \cM = \bigoplus_{l\in L}M_{m_l}(\mathbb{C}).
\end{align*} 
The inclusion is described by the adjacent matrix $A = (a_{kl})_{k\in K,l\in L}$. 
We have the dimension (row) vectors $\vec{n} = (n_k)_{k\in K}$, $\vec{m} = (m_l)_{l\in L}$ and trace (column) vectors $\vec{s}=(s_k)_{k\in K}$, $\vec{t}=(t_l)_{l\in L}$ for $\cN$, $\cM$ respectively. 
They satisfy the relation
\begin{align*}
    \vec{n}A = \vec{m},\quad A\vec{t} = \vec{s}.
\end{align*}

For $k\in K$ and $l\in L$, denote by $e_k$ ($f_l$) the minimal central projection of $\cN$ ($\cM$), then $e_kf_l$ are the minimal central projections of $\cN'\cap\cM$. 
Note that $e_kf_l$ is a rank $n_ka_{kl}$ subprojection of $f_l$, so we have
\begin{align*}
    \tau_{\cM}(e_kf_l) = n_ka_{kl}t_l,\quad k\in K,l\in L.
\end{align*}

To compute $\tau_{\cN'}$, we construct a set of Pimsner-Popa basis of $\cM$ as follows. 
For a fixed $l$, we decompose $\cM f_l$ as
\begin{align*}
    \cM f_l = \bigoplus_{(k_1,k_2)\in K\times K} e_{k_1}\cM f_le_{k_2}.
\end{align*}
Denote $e_{k_1}\cM f_le_{k_2}$ as $B^l_{k_1,k_2}$. 
Each $B^l_{k_1,k_2}$ will be identified with  $M_{n_{k_1}\times n_{k_2}}(\mathbb{C})\otimes M_{a_{k_1 l}\times a_{k_2 l}}(\mathbb{C})$ in the way such that for any $b\otimes c\in B^l_{k_1,k_2}$ we have
\begin{align*}
    x\left( b\otimes c\right)y = x_{k_1}by_{k_2}\otimes c,
\end{align*}
where $x = \bigoplus_{k\in K}x_k$ and $y = \bigoplus_{k\in K}y_k$ are in $\cN$. 
Therefore $B^l_{k_1,k_2}$, as an $\cN e_{k_1}$-$\cN e_{k_2}$-bimodule, decomposes into the direct sum of $a_{k_1 l}a_{k_2 l}$ irreducibles. 
Identity $\cN e_{k}$ as $M_{n_k}(\mathbb{C})$, we see that each irreducible submodule of $B_{k_1,k_2}$ is isomorphic to
\begin{align*}
    _{M_{n_{k_1}}(\mathbb{C})}M_{n_{k_1}\times n_{k_2}}(\mathbb{C})_{M_{n_{k_2}}(\mathbb{C})},
\end{align*}
with the bimodule structure given by matrix multiplications. 
By considering $M_{n_{k_1}\times n_{k_2}}(\mathbb{C})$ as right $\cN e_{k_2}$-module, it further decomposes into $n_{k_1}$ copies of irreducibles of $\cN e_{k_2}$. 

Now it is easy to produce a set of Pimsner-Popa basis for $L^2(\cM)_{\cN}$. 
For $(k,l)\in K\times L$, choose a set of basis of $\bigoplus_{k'\in K} B^l_{k',k}$ as a right $\cN$-module as $\{\xi_{i,k',k,l}\}^{n_{k'}a_{k' l}a_{k l}}_{i=1}$ so that each $\xi_{i,k',k,l}$ generates an irreducible right module of $\cN e_{k}$ (thus of $\cN$). 
By properly scaling each $\xi_{i,k',k,l}$, we can assume that $E^{\cM}_{\cN}(\xi^*_{i,k',k,l}\xi_{i,k',k,l})$ is a minimal projection under $e_k$. 
Then we can compute $\tau_{\cN'}(e_kf_l)$ as
\begin{align*}
    \tau_{\cN'}(e_kf_l) &= \delta^{-2}\sum_{k'\in K}\sum_{1\leq i\leq n_{k'}a_{k' l}a_{k l}}\tau_{\cM}(\xi_{i,k',k,l}e_kf_l\xi^*_{i,k',k,l})\\
    &= \delta^{-2}\sum_{k'\in K} n_{k'}a_{k' l}a_{k l}s_k = \delta^{-2}s_ka_{kl}m_l.
\end{align*}
Therefore we obtain:
\begin{align*}
    \Delta_0 = \sum_{k\in K,l\in L}\frac{s_ka_{kl}m_l}{\delta^2 n_ka_{kl}t_l}e_kf_l = \sum_{k\in K,l\in L}\frac{s_km_l}{\delta^2 n_kt_l}e_kf_l.
\end{align*}
Inserting it into the formula obtained in Proposition \ref{prop:: explicit upperbound}:
\begin{align*}
    \delta D_{\tau_{\cM_2}}(\Delta^{1/2}\widehat{id_{\cM}}\Delta^{1/2}\| \Delta^{1/2}\widehat{E_{\cN}}\Delta^{1/2}) &= 2\log\delta -(-\tau_{\cM}(\log\Delta_0))\\
    &= 2\log\delta -\sum_{k\in K,l\in L}n_ka_{kl}t_l \log\frac{\delta^2 n_kt_l}{s_km_l}\\
    &= \sum_{k\in K,l\in L}n_ka_{kl}t_l \log\frac{s_km_l}{n_kt_l}.
\end{align*}
By Theorem 6.2 of \cite{PP1986}, we have
\begin{align*}
    H(\cM\vert\cN) &= \sum_{l\in L}m_lt_l \log\frac{m_l}{t_l} + \sum_{k\in K}n_ks_k \log \frac{s_k}{n_k} + \sum_{k\in K,l\in L}n_ka_{kl}t_l\log\min\left\{\frac{n_k}{a_{kl}},1\right\}\\
    &= \sum_{k\in K,l\in L}n_ka_{kl}t_l \log\frac{s_km_l}{n_k t_l} + \sum_{k\in K,l\in L}n_ka_{kl}t_l\log\min \left\{\frac{n_k}{a_{kl}},1\right\},
\end{align*}
where in the second equality we use that $m_l = \sum_{k\in K}n_ka_{kl}$ and $s_k = \sum_{l\in L} a_{kl}t_l$, $l\in L$ and $k\in K$. 
Thus we obtain
\begin{align*}
    &\delta D_{\tau_{\cM_2}}(\Delta^{1/2}\widehat{id_{\cM}}\Delta^{1/2}\| \Delta^{1/2}\widehat{E_{\cN}}\Delta^{1/2})-H(\cM\vert\cN)\\
    &= -\sum_{k\in K,l\in L}n_ka_{kl}t_l\log\min\left\{\frac{n_k}{a_{kl}},1\right\}\\
    &= \sum_{k\in K,l\in L}n_ka_{kl}t_l\log\max\left\{\frac{a_{kl}}{n_k},1\right\}.
\end{align*}
In all, we have proved the following proposition.
\begin{proposition}\label{prop:: case of equality in finite dimensions}
    Let $\cN\subset\cM$ be an inclusion of finite dimensional $C^*$ algebras with inclusion matrix $[a_{kl}]$ and dimension vector $\Vec{n}= {n_k}_{k\in K}$ for $\cN$. 
    Then the necessary and sufficient condition for the equality 
\begin{align*}
    H(\cM\vert\cN) = \delta D_{\tau_{\cM_2}}(\Delta^{1/2}\widehat{id_{\cM}}\Delta^{1/2}\| \Delta^{1/2}\widehat{E_{\cN}}\Delta^{1/2})
\end{align*}
to hold is that 
\begin{align*}
    a_{kl}\leq n_k,\quad k\in K,l\in L.
\end{align*}
\end{proposition}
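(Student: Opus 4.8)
The plan is to read the proposition off directly from the difference formula established immediately before its statement, namely
\begin{align*}
    \delta D_{\tau_{\cM_2}}(\Delta^{1/2}\widehat{id_{\cM}}\Delta^{1/2}\| \Delta^{1/2}\widehat{E_{\cN}}\Delta^{1/2})-H(\cM\vert\cN) = \sum_{k\in K,l\in L}n_ka_{kl}t_l\log\max\left\{\frac{a_{kl}}{n_k},1\right\}.
\end{align*}
Since all of the analytic and combinatorial labor has already gone into producing this closed expression, the remaining task is only an elementary positivity analysis that decides when the right-hand side vanishes.

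First I would record that every summand is nonnegative. The prefactor $n_ka_{kl}t_l$ is a product of a positive dimension $n_k\geq 1$, a nonnegative integer entry $a_{kl}\geq 0$ of the inclusion matrix, and a strictly positive trace value $t_l>0$ arising from the faithful normalized trace on $\cM$; hence $n_ka_{kl}t_l\geq 0$, with equality precisely when $a_{kl}=0$. Moreover $\max\{a_{kl}/n_k,1\}\geq 1$, so $\log\max\{a_{kl}/n_k,1\}\geq 0$. Thus the displayed difference is a finite sum of nonnegative terms, and it equals zero if and only if each individual term equals zero.

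Next I would analyze the vanishing of a single term $n_ka_{kl}t_l\log\max\{a_{kl}/n_k,1\}$. Such a term is zero exactly when either $a_{kl}=0$, so that the prefactor vanishes, or $\log\max\{a_{kl}/n_k,1\}=0$, i.e. $\max\{a_{kl}/n_k,1\}=1$, which is equivalent to $a_{kl}\leq n_k$. Since $a_{kl}=0$ already forces $a_{kl}\leq n_k$ (because $n_k\geq 1$), in every case the term vanishes if and only if $a_{kl}\leq n_k$. Summing over $k$ and $l$, the total difference is zero precisely when $a_{kl}\leq n_k$ for all $k\in K$ and $l\in L$, which is exactly the asserted necessary and sufficient condition for equality.

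I do not expect a genuine obstacle in this last step, since the substantive content lives entirely in the preceding reduction. The only point demanding care is to confirm the strict positivity of the $t_l$ and of the $n_k$, so that the sign of each summand is controlled solely by the factor $\log\max\{a_{kl}/n_k,1\}$; once this is in hand, the equivalence follows from the standard fact that a sum of nonnegative reals vanishes if and only if each term does.
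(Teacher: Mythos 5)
Your proposal is correct and follows the same route as the paper: the paper's proof of this proposition is precisely the computation preceding its statement, which culminates in the identity
\begin{align*}
    \delta D_{\tau_{\cM_2}}(\Delta^{1/2}\widehat{id_{\cM}}\Delta^{1/2}\| \Delta^{1/2}\widehat{E_{\cN}}\Delta^{1/2})-H(\cM\vert\cN) = \sum_{k\in K,l\in L}n_ka_{kl}t_l\log\max\left\{\frac{a_{kl}}{n_k},1\right\},
\end{align*}
after which the paper leaves the vanishing analysis implicit. Your explicit verification that each summand is nonnegative (using $n_k\geq 1$, $a_{kl}\geq 0$, and $t_l>0$ by faithfulness of the trace) and vanishes if and only if $a_{kl}\leq n_k$ is exactly the missing elementary step, so the argument is complete.
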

\begin{remark}
    This condition can be related to downward Jones basic construction as follows. 
    Consider the inclusion $\mathbb{C}\subset \cN \subset \mathcal{B}(L^2(\cN))$ which is a basic construction. 
    By characterization of finite dimensional basic construction as in \cite{Jones1983}, the adjacent matrix for the inclusion $\cN \otimes 1\subset \mathcal{B}(L^2(\cN))\otimes \mathbb{C}^{|L|}$ is $\tilde{A} = (\tilde{a}_{kl})_{k\in K,l\in L}$, $\tilde{a}_{kl} = n_k$. 
    Therefore the condition $a_{kl}\leq n_k$ for all $k$ and $l$ is equivalent to the existence of a projection $p'\in \cN'\otimes \mathbb{C}^{|L|}$ with central support $1$ such that the inclusion $\cN\subset\cM$ is isomorphic to 
    \begin{align*}
        (\cN \otimes 1) p'\subset p'(\mathcal{B}(L^2(\cN))\otimes \mathbb{C}^{|L|})p'.
    \end{align*}
\end{remark} 
\section{Derivatives for Completely Positive Maps}
In this section, we study the derivative between comparable completely positive maps as a generalization of Fourier multiplier. 
Our aim is to establish that Fourier multipliers can be regarded as derivatives with respect to a conditional expectation. 

With the notion of relative tensor product of bimodules, we derive a formula expressing the derivative of the composition of completely positive maps in terms of their derivatives. 
This formula will be instrumental in proving the monotonicity of relative entropy in the subsequent section. 

The following lemma, though simple, is essential for us. 
It has been observed in many cases, see for instance \cite{Connes1994}, \cite{Popa2010}.
\begin{lemma}\label{lemma:RN}
    Suppose $\Psi,\Phi:\mathcal{A}\rightarrow\mathcal{B}$ are normal completely positive maps.
    Then the following are equivalent:
    \begin{itemize}
    \item[(1)] $\Phi\preccurlyeq \Psi$;
    \item[(2)] there is a unique positive element $h\in \End(_\mathcal{A}\mathcal{H}^\Psi_\mathcal{B})$ such that for all $a\in\mathcal{A}$
    $$\Phi(a)=v_\Psi^*\pi_\Psi(a)hv_\Psi.$$
    \end{itemize}
\end{lemma}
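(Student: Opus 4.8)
The plan is to treat the two implications separately, the substance lying in $(1)\Rightarrow(2)$, which is a Radon--Nikodym theorem for completely positive maps realized inside the correspondence $\mathcal{H}^{\Psi}$. Throughout I write $v_{\Psi}=v_{\Psi,\phi}$ and fix the dilation $(\mathcal{H}^{\Psi},\pi_{\Psi},v_{\Psi})$ of $\Psi$. For $(2)\Rightarrow(1)$ the argument is short: given such an $h$, put $c=\|h\|$, so that $c\cdot 1-h\geq 0$ in $\End(_{\mathcal{A}}\mathcal{H}^{\Psi}_{\mathcal{B}})$ and hence commutes with every $\pi_{\Psi}(a)$. Setting $w=(c\cdot 1-h)^{1/2}v_{\Psi}$, the bimodule commutation gives \[(c\Psi-\Phi)(a)=v_{\Psi}^{*}\pi_{\Psi}(a)(c\cdot 1-h)v_{\Psi}=w^{*}\pi_{\Psi}(a)w,\] which is of Stinespring form and therefore completely positive; thus $\Phi\preccurlyeq\Psi$.

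For $(1)\Rightarrow(2)$ I would work with the sesquilinear forms that build the correspondences. For a normal completely positive $T:\mathcal{A}\to\mathcal{B}$ let $\langle a_{1}\otimes\xi_{1},a_{2}\otimes\xi_{2}\rangle^{T}_{0}=\langle\pi_{\phi}(T(a_{2}^{*}a_{1}))\xi_{1},\xi_{2}\rangle$ be the associated form on $\mathcal{A}\otimes L^{2}(\mathcal{B},\phi)$. The key observation is that complete positivity of $c\Psi-\Phi$ is exactly the positive semidefiniteness of $c\langle\cdot,\cdot\rangle^{\Psi}_{0}-\langle\cdot,\cdot\rangle^{\Phi}_{0}$, i.e. $\langle x,x\rangle^{\Phi}_{0}\leq c\langle x,x\rangle^{\Psi}_{0}$ for all $x$. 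This domination forces $\mathcal{K}_{\Psi}\subseteq\mathcal{K}_{\Phi}$, so the $\Phi$-form descends to the quotient $\mathcal{A}\otimes L^{2}(\mathcal{B},\phi)/\mathcal{K}_{\Psi}$ and is bounded there by $c$ relative to $\langle\cdot,\cdot\rangle_{\Psi}$. By the representation theorem for bounded positive forms there is a unique operator $h$ on $\mathcal{H}^{\Psi}$ with $0\leq h\leq c$ such that \[\langle h[a_{1}\otimes\xi_{1}]_{\Psi},[a_{2}\otimes\xi_{2}]_{\Psi}\rangle=\langle\pi_{\phi}(\Phi(a_{2}^{*}a_{1}))\xi_{1},\xi_{2}\rangle.\]

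It then remains to verify three things. First, $h\in\End(_{\mathcal{A}}\mathcal{H}^{\Psi}_{\mathcal{B}})$: commutation with $\pi_{\Psi}(\mathcal{A})$ is immediate from the defining relation (both $\langle h\pi_{\Psi}(a_{0})\,\cdot\,,\,\cdot\,\rangle$ and $\langle\pi_{\Psi}(a_{0})h\,\cdot\,,\,\cdot\,\rangle$ produce $\Phi(a_{2}^{*}a_{0}a_{1})$), while commutation with $\pi'_{\Psi}(\mathcal{B})$ uses that the right action of $\mathcal{B}$ on $L^{2}(\mathcal{B},\phi)$ lies in the commutant of $\pi_{\phi}(\mathcal{B})$. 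Second, the dilation identity: evaluating $v_{\Psi}^{*}\pi_{\Psi}(a)hv_{\Psi}$ on $\xi_{1},\xi_{2}\in L^{2}(\mathcal{B},\phi)$, using $v_{\Psi}\xi=[1\otimes\xi]_{\Psi}$, the commutation of $h$ with $\pi_{\Psi}(a)$, and the defining relation with $a_{2}=1$, yields $\pi_{\phi}(\Phi(a))$, which is the asserted $\Phi(a)=v_{\Psi}^{*}\pi_{\Psi}(a)hv_{\Psi}$. Third, uniqueness: if $h'$ also satisfies $(2)$ then $k:=h-h'\in\End(_{\mathcal{A}}\mathcal{H}^{\Psi}_{\mathcal{B}})$ obeys $v_{\Psi}^{*}\pi_{\Psi}(a)kv_{\Psi}=0$ for all $a$, equivalently $\langle kv_{\Psi}\xi_{1},[a^{*}\otimes\xi_{2}]_{\Psi}\rangle=0$; since the vectors $[a^{*}\otimes\xi_{2}]_{\Psi}$ are total in $\mathcal{H}^{\Psi}$ we get $kv_{\Psi}=0$, in particular $k\Omega_{\Psi,\phi}=0$, and because $\Omega_{\Psi,\phi}$ is separating for $\End(_{\mathcal{A}}\mathcal{H}^{\Psi}_{\mathcal{B}})$ this gives $k=0$.

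I expect the main obstacle to be the identification of complete positivity of $c\Psi-\Phi$ with domination of the full sesquilinear form on $\mathcal{A}\otimes L^{2}(\mathcal{B},\phi)$: it is precisely \emph{complete} positivity, not mere positivity, that makes the $\Phi$-form positive semidefinite on all finite families, which is what permits it to be represented by a single Hilbert-space operator rather than only by a positive map. The verification that this representative is a genuine bimodule intertwiner, especially its commutation with the right $\mathcal{B}$-action, is the other delicate point, and the uniqueness then rests squarely on the separating property of $\Omega_{\Psi,\phi}$ recorded in the preliminaries.
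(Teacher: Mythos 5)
Your proposal is correct and follows essentially the same route as the paper: both implications hinge on the domination $\langle x,x\rangle_0^{\Phi}\leq c\,\langle x,x\rangle_0^{\Psi}$ coming from complete positivity of $c\Psi-\Phi$, and the operator $h$ you obtain from the representation theorem for the bounded quotient form is exactly the paper's $u^{*}u$ for the canonical contraction $u:\mathcal{H}^{\Psi}\to\mathcal{H}^{\Phi}$, $u[\zeta]_{\Psi}=[\zeta]_{\Phi}$. The only cosmetic differences are that the paper factors through $u$ rather than invoking the form-representation theorem, and proves uniqueness by matching matrix coefficients of $h$ and $k$ on the total set $\{[a\otimes\xi]_{\Psi}\}$ instead of appealing to the separating property of $\Omega_{\Psi,\phi}$; both variants are valid.
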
  
\begin{proof}
    (1)$\implies$(2): Assume $c\Psi-\Phi$ is completely positive for some $c>0$.
    Suppose $\varphi$ is a normal faithful state on $\mathcal{B}$.
    For any $\displaystyle \zeta=\sum^n_{i=1} a_i\otimes \xi_i\in \mathcal{A}\otimes L^2(\mathcal{B}, \varphi)$, we denote by $[\zeta]_{\Phi}, [\zeta]_\Psi$ the image of $\zeta$ in $\mathcal{H}^\Phi$, $\mathcal{H}^\Psi$.
    Note that
    \begin{align*}
        \left\langle [\zeta]_\Phi,[\zeta]_{\Phi}\right\rangle_{\Phi}=\sum_{i,j=1}^n \left\langle\Phi(a_i^*a_j)\xi_j,\xi_i\right\rangle.
    \end{align*}
    With $A=(a^*_ia_j)_{i,j=1}^n\in \big(\mathcal{A}\otimes M_n(\mathbb{C})\big)_+$ and $\widetilde{\xi}=\begin{pmatrix}
        \xi_1 \\
        \vdots \\
        \xi_n
    \end{pmatrix}\in L^2(B, \varphi)\otimes \mathbb{C}^n$, we have
    \begin{align*} 
    \left\langle[\zeta]_\Phi,[\zeta]_\Phi\right\rangle_{\Phi}
    =\left\langle\Phi(A)\widetilde{\xi},\widetilde{\xi}\right\rangle
    \leq c \left\langle\Psi(A)\widetilde{\xi},\widetilde{\xi}\right\rangle
    =c\left\langle [\zeta]_\Psi,[\zeta]_\Psi\right\rangle_{\Psi}.
    \end{align*}
By the fact that $\mathcal{A}\otimes L^2(\mathcal{B}, \varphi)$ is dense in $\mathcal{H}^\Psi$, $\mathcal{H}^\Phi$ respectively, the linear map $u:\mathcal{H}^\Psi\to \mathcal{H}^\Phi$ defined by $u[\zeta]_\Psi=[\zeta]_\Phi$ for any $\zeta\in \mathcal{A}\otimes L^2(\mathcal{B}, \varphi)$ is a bounded bimodule map and $\|u\|\leq c^{1/2}$. 
By a direct check, we see that $h=u^*u\in \End(_\mathcal{A}\mathcal{H}^\Psi_\mathcal{B})$.
Note that for any $\xi\in L^2(\mathcal{B}, \varphi)$,
\begin{align*}
    v_\Phi\xi
    =[1_{\mathcal{A}}\otimes \xi]_{\Phi}
    =u[1_{\mathcal{A}}\otimes \xi]_{\Psi} 
    =u v_\Psi \xi.
\end{align*}
We see that $uv_\Psi=v_\Phi$.
Now, we obtain that for any $a\in\mathcal{A}$,
\begin{align*}
    \Phi(a)=&v_{\Phi}^*\pi_{\Phi}(a)v_{\Phi}
    =v_\Psi^*u^*\pi_{\Phi}(a)uv_\Psi\\
    =&v_\Psi^*u^*u\pi_{\Psi}(a)v_\Psi
    =v_\Psi^*h\pi_{\Psi}(a)v_\Psi
\end{align*}

We shall prove the uniqueness of $h$.
Suppose $k$ is another positive operator in $\End(_\mathcal{A}\mathcal{H}(\Psi)_\mathcal{B})$ such that $\Phi(a)=v_\Psi^*k\pi_\Psi(a)v_\Psi$.
Then we have that for any $a_1, a_2\in\mathcal{A}$ and $\xi_1, \xi_2\in L^2(\mathcal{B}, \varphi)$,
\begin{align*}
    \left\langle k^{1/2}[a_1\otimes\xi_1]_\Psi, k^{1/2} [a_2\otimes\xi_2]_\Psi\right\rangle_\Psi
    =&\left\langle \Phi(a_2^*a_1)\xi_1, \xi_2\right\rangle_\varphi \\
    =&\left\langle h^{1/2}[a_1\otimes\xi_1]_\Psi, h^{1/2}[a_2\otimes\xi_2]_\Psi\right\rangle_\Psi.
\end{align*}
Hence $k=h$.

(2)$\implies$ (1): Note that the operator $\|h\|_{\infty}-h$ is positive. We see that
\begin{align*}
    (\|h\|_{\infty}\Psi-\Phi)(\cdot)=v_\Psi^*(\|h\|_{\infty}-h)\pi_\Psi(\cdot)v_\Psi
\end{align*}
is a completely positive.
This completes the proof.
\end{proof}
\begin{definition}\label{def::2}
    Suppose $\Psi,\Phi:\mathcal{A}\to\mathcal{B}$ normal completely positive with $\Phi \preccurlyeq\Psi$, the unique positive element in $\text{End}(_\mathcal{A}\mathcal{H}^{\Psi}_\mathcal{B})$ which satisfies $(2)$ of Lemma \ref{lemma:RN} will be called the derivative of $\Phi$ with respect to $\Psi$, and is denoted as $h_{\Phi,\Psi}$.
\end{definition}
Given a normal completely positive map $\Phi:\mathcal{A}\to\mathcal{B}$ and a faithful normal state $\varphi$ on $\mathcal{B}$, put $\varphi_0=\varphi\circ\Phi$. 
Then by Kadison-Schwarz inequality, there exists $M>0$ such that
\begin{align*}
    \varphi(\Phi(a)^*\Phi(a))\leq M\varphi(\Phi(a^*a)),\quad a\in\mathcal{A}.
\end{align*}
It then follows that
\begin{align*}
    a\Omega_{\varphi_0}\to \Phi(a)\Omega_{\varphi},\quad a\in\mathcal{A}
\end{align*}
extends to a bounded linear map from $L^2(\mathcal{A},\varphi_0)$ to $L^2(\mathcal{B},\varphi)$. 
We shall adopt the notion from \cite{Petz1988} and denote this bounded linear map as $V^{\varphi}_{\Phi}$. 
Here we show that $V^{\varphi}_{\Phi}$ can be recovered from the derivative $h_{\Phi,\Psi}$.
\begin{definition}
    Given normal completely positive map $\Psi:\mathcal{A}\to\mathcal{B}$ and a faithful state $\varphi$ on $\mathcal{B}$, set $\varphi_0=\varphi\circ\Psi$. 
    Define the isometry $u_{\Psi}:L^2(\mathcal{A},\varphi_0)\to \mathcal{H}^{\Psi}$ such that 
    \begin{align*}
        u_{\Psi}:  a\Omega_{\varphi_0}\mapsto a\Omega_{\Psi,\varphi},\quad a\in \mathcal{A}.
    \end{align*}
\end{definition}
\begin{proposition}\label{prop:: The derivative implements Phi}
    For every $\Phi\preccurlyeq \Psi:\mathcal{A}\to\mathcal{B}$ and every faithful state $\varphi$ on $\mathcal{B}$, 
    \begin{align*}
        V^{\varphi}_{\Phi}=v^*_{\Psi,\varphi}h_{\Phi,\Psi}u_{\Psi,\varphi}.
    \end{align*}
    Moreover, $h_{\Phi,\Psi}$ is the unique positive operator in $\End(_{\mathcal{A}}\mathcal{H}^{\Psi}_{\mathcal{B}})$ which satisfies the above equation.
\end{proposition}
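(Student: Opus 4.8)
The plan is to verify the asserted identity on the dense subspace $\mathcal{A}\Omega_{\varphi_0}\subset L^2(\mathcal{A},\varphi_0)$ (with $\varphi_0=\varphi\circ\Psi$, the reference state built into $u_{\Psi,\varphi}$) and then to obtain uniqueness by reducing to the uniqueness clause already established in Lemma~\ref{lemma:RN}. Throughout I would use three facts: that $h_{\Phi,\Psi}\in\End(_{\mathcal{A}}\mathcal{H}^{\Psi}_{\mathcal{B}})$ commutes with the left action $\pi_\Psi$ and with the right action of $\mathcal{B}$; that $\Omega_{\Psi,\varphi}=v_{\Psi,\varphi}\Omega_\varphi$ and $u_{\Psi,\varphi}(a\Omega_{\varphi_0})=a\Omega_{\Psi,\varphi}=\pi_\Psi(a)v_{\Psi,\varphi}\Omega_\varphi$; and the defining relation $\Phi(a)=v^*_{\Psi,\varphi}\pi_\Psi(a)h_{\Phi,\Psi}v_{\Psi,\varphi}$ supplied by Lemma~\ref{lemma:RN}(2).

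For existence I would compute, for $a\in\mathcal{A}$,
\[
v^*_{\Psi,\varphi}h_{\Phi,\Psi}u_{\Psi,\varphi}(a\Omega_{\varphi_0})=v^*_{\Psi,\varphi}h_{\Phi,\Psi}\pi_\Psi(a)v_{\Psi,\varphi}\Omega_\varphi=v^*_{\Psi,\varphi}\pi_\Psi(a)h_{\Phi,\Psi}v_{\Psi,\varphi}\Omega_\varphi=\Phi(a)\Omega_\varphi,
\]
where the first equality rewrites $u_{\Psi,\varphi}(a\Omega_{\varphi_0})$, the second uses that $h_{\Phi,\Psi}$ is a left $\mathcal{A}$-module map, and the third is Lemma~\ref{lemma:RN}(2) applied to $\Omega_\varphi$ (identifying $\Phi(a)$ with $\pi_\varphi(\Phi(a))$). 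Since $\Phi(a)\Omega_\varphi=V^\varphi_\Phi(a\Omega_{\varphi_0})$ and both operators are bounded, density of $\mathcal{A}\Omega_{\varphi_0}$ yields the identity. At this point I would record the bookkeeping remark that, although the definition of $V^\varphi_\Phi$ took $\varphi\circ\Phi$ as the reference state on $\mathcal{A}$, the inequality $\varphi\circ\Phi\le c\,\varphi\circ\Psi$ (which follows from $\Phi\preccurlyeq\Psi$) shows $a\mapsto\Phi(a)\Omega_\varphi$ extends boundedly over $L^2(\mathcal{A},\varphi\circ\Psi)$ as well, so the two descriptions of $V^\varphi_\Phi$ agree on the common core $\mathcal{A}$.

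For uniqueness, suppose $k\ge 0$ in $\End(_{\mathcal{A}}\mathcal{H}^{\Psi}_{\mathcal{B}})$ also satisfies $V^\varphi_\Phi=v^*_{\Psi,\varphi}k\,u_{\Psi,\varphi}$. Running the same computation with $k$ in place of $h_{\Phi,\Psi}$ and again using that $k$ commutes with $\pi_\Psi(a)$, I obtain $v^*_{\Psi,\varphi}\pi_\Psi(a)k\,v_{\Psi,\varphi}\Omega_\varphi=\Phi(a)\Omega_\varphi=\pi_\varphi(\Phi(a))\Omega_\varphi$ for all $a$. The crucial observation is that $v^*_{\Psi,\varphi}\pi_\Psi(a)k\,v_{\Psi,\varphi}$ is a right $\mathcal{B}$-module map on $L^2(\mathcal{B},\varphi)$, since $v_{\Psi,\varphi}$, $\pi_\Psi(a)$ and $k$ all commute with the right $\mathcal{B}$-action; hence it lies in $\pi_\varphi(\mathcal{B})$. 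Because $\varphi$ is faithful, $\Omega_\varphi$ is separating for $\pi_\varphi(\mathcal{B})$, so agreement of these two elements of $\pi_\varphi(\mathcal{B})$ on $\Omega_\varphi$ upgrades to the operator identity $v^*_{\Psi,\varphi}\pi_\Psi(a)k\,v_{\Psi,\varphi}=\Phi(a)$ for every $a$. By the uniqueness statement in Lemma~\ref{lemma:RN}, this forces $k=h_{\Phi,\Psi}$.

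The computations themselves are short; the delicate points, and where I would spend the most care, are the bookkeeping of the several Hilbert spaces and reference states — in particular reconciling the $\varphi\circ\Phi$ appearing in the definition of $V^\varphi_\Phi$ with the $\varphi\circ\Psi$ built into $u_{\Psi,\varphi}$ — and the step in the uniqueness argument that promotes equality of vectors to equality of operators, which rests on recognizing $v^*_{\Psi,\varphi}\pi_\Psi(\cdot)k\,v_{\Psi,\varphi}$ as lying in $\pi_\varphi(\mathcal{B})$ and invoking the separating property of $\Omega_\varphi$.
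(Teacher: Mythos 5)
Your proposal is correct and follows essentially the same route as the paper: verify the identity on the dense subspace $\mathcal{A}\Omega_{\varphi_0}$ using $u_{\Psi,\varphi}(a\Omega_{\varphi_0})=\pi_\Psi(a)v_{\Psi,\varphi}\Omega_\varphi$, the bimodule property of $h_{\Phi,\Psi}$, and Lemma~\ref{lemma:RN}(2), then reduce uniqueness to the uniqueness clause of Lemma~\ref{lemma:RN}. The two points you flag as delicate (reconciling the reference states $\varphi\circ\Phi$ and $\varphi\circ\Psi$, and upgrading the vector identity on $\Omega_\varphi$ to an operator identity via the separating property of $\Omega_\varphi$ for $\pi_\varphi(\mathcal{B})$) are exactly the details the paper passes over with ``clearly,'' and your treatment of both is sound.
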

\begin{proof}
    For each $a\in\mathcal{A}$, we have
    \begin{align*}
    v^*_{\Psi,\varphi}h_{\Phi,\Psi}\left(u_{\Psi,\varphi}a\Omega_{\varphi_0}\right) &= v^*_{\Psi,\varphi} h_{\Phi,\Psi}a\Omega_{\Psi,\varphi}\\
        &=\left(v^*_{\Psi,\varphi} h_{\Phi,\Psi}\pi_{\Psi}(a)v_{\Psi,\varphi}\right)\Omega_{\varphi}\\
        &= \Phi(a)\Omega_{\varphi}.
    \end{align*}
    Hence $V^{\varphi}_{\Phi}=v^*_{\Psi,\varphi}h_{\Phi,\Psi}u_{\Psi,\varphi}$. 
    Clearly for positive element $k\in \End(_{\mathcal{A}}\mathcal{H}^{\Psi}_{\mathcal{B}})$  satisfying $V^{\varphi}_{\Phi}=v^*_{\Psi,\varphi}k u_{\Psi,\varphi}$ we have 
    $v^*_{\Psi,\varphi}k\pi_{\Psi}(a)v_{\Psi,\varphi}=\Phi(a)$ for every $a\in\mathcal{A}$. 
    Therefore by Lemma \ref{lemma:RN} $k=h_{\Phi,\Psi}$.
\end{proof}

\begin{remark}
    Notice that $v_{\Psi}$ and $u_{\Psi}$ are left and right multiplications by the bounded vector $\Omega_{\Psi}$ in $_{\mathcal{A}}\mathcal{H}^{\Psi}_{\mathcal{B}}$.
\end{remark}

\begin{notation}
    When $\cN\subset\cM$ is a finite inclusion and $\Phi\in \mathbf{CP}_{\cN}(\cM)$ is completely positive we have $\Phi\preccurlyeq E_{\cN}$ by Remark \ref{rmk:: E_N dominates bimodule maps}, and we shall abbreviate $h_{\Phi,E_{\cN}}$ as $h_{\Phi}$. 
\end{notation}
Now we discuss the connection between derivative for completely positive bimodule maps and their Fourier multipliers. 
First let us remark that, using planar algebra, for $\Phi\in\mathbf{CP}_{\cN}(\cM)$ we have
\begin{align*}
    V^{\tau_{\cM}}_{\Phi} = \vcenter{\hbox{
        \begin{tikzpicture}
          \begin{scope}[scale=0.5]
          \FillGrey (0.3,-0.5) -- (1,-0.5)
          -- (1,1.5) -- (0.3,1.5);
          \VeryThinLine (0.3,-0.5) -- (0.3,1.5);
          \VeryThinLine (1,-0.5) -- (1,1.5);
          \draw[fill=white] (0,0) rectangle (1.3,1);
          \node at (0.65,0.5) {$\Phi$};
          \end{scope}
        \end{tikzpicture}
      }}.
\end{align*}
Since $\Phi\preccurlyeq E_{\cN}$ by the result of \cite{HJJLW23}, its the derivative $h_{\Phi}$ is well-defined. 
So we are in a position to apply Proposition \ref{prop:: The derivative implements Phi}. 
Along with this, we obtain a generalization of the Pimsner-Popa inequality (c.f. Theorem 2.2 of \cite{PP1986}) for completely positive bimodule maps.
\begin{proposition}\label{prop:: norm of Fourier multiplier}
    Let $\cN\subset\cM$ be a finite inclusion of finite von Neumann algebras that admits a downward Jones basic construction $\cN_{-1}\subset\cN\subset^{e_{-1}}\cM$. 
    Suppose $\Phi\in \mathbf{CP}_{\cN}(\cM)$.
    Then 
    \begin{align*}
        \delta^2\|\Phi(e_{-1})\| _{\infty}=\inf\{c>0|cE_{\cN}-\Phi\text{ is completely positive}\}.
    \end{align*}
\end{proposition}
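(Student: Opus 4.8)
The plan is to recognize that the right-hand side is precisely the operator norm of the derivative $h_{\Phi} = h_{\Phi,E_{\cN}}$ and then to transport this norm through the Fourier multiplier and the canonical shift until it lands on $\Phi(e_{-1})$.

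First I would extract from the proof of Lemma \ref{lemma:RN} the general fact that for comparable completely positive maps $\Phi \preccurlyeq \Psi$ one has $\inf\{c>0 : c\Psi - \Phi \text{ is completely positive}\} = \|h_{\Phi,\Psi}\|_\infty$. Indeed, in the implication (1)$\Rightarrow$(2) the intertwiner $u$ satisfies $\|u\| \leq c^{1/2}$ whenever $c\Psi - \Phi$ is completely positive, so $\|h_{\Phi,\Psi}\|_\infty = \|u^*u\| = \|u\|^2 \leq c$; conversely the implication (2)$\Rightarrow$(1) exhibits $\|h_{\Phi,\Psi}\|_\infty\Psi - \Phi$ as completely positive, so $\|h_{\Phi,\Psi}\|_\infty$ itself lies in the set over which the infimum is taken. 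Applying this with $\Psi = E_{\cN}$, which is legitimate since $\Phi \preccurlyeq E_{\cN}$ by Remark \ref{rmk:: E_N dominates bimodule maps}, reduces the proposition to proving the identity $\delta^2\|\Phi(e_{-1})\|_\infty = \|h_\Phi\|_\infty$.

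Next I would identify the derivative with the Fourier multiplier. Since $\widehat{\Phi}$ lies in $\mathscr{P}_{2,-} = \cM'\cap\cM_2$, it commutes with the left action $\pi_{E_{\cN}}(x) = x\otimes_{\cN}1$, so statement (1) of Proposition \ref{prop:: characterize the state omega} may be rewritten as $\Phi(x) = v^*_{\cN}(\delta\widehat{\Phi})\pi_{E_{\cN}}(x)v_{\cN}$ for all $x\in\cM$. As $\delta\widehat{\Phi} \geq 0$, the uniqueness clause of Lemma \ref{lemma:RN} forces $h_\Phi = \delta\widehat{\Phi}$, which is exactly the content of Theorem \ref{thm:: Fourier multiplier is derivative}; hence $\|h_\Phi\|_\infty = \delta\|\widehat{\Phi}\|_\infty$. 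I would then pass through the canonical shift $\gamma\colon\cN'_{-1}\cap\cM \to \cM'\cap\cM_2$ of Equation \eqref{eqn:: canonical shifts}. Being a $*$-isomorphism, $\gamma$ is isometric for the operator norm, and $\widehat{\Phi} \in \cM'\cap\cM_2$, so $\|\widehat{\Phi}\|_\infty = \|\gamma^{-1}(\widehat{\Phi})\|_\infty$. By statement (2) of Lemma \ref{lemma:: inverse of canonical shift} we have $\gamma^{-1}(\widehat{\Phi}) = \delta\Phi(e_{-1})$, giving $\|\widehat{\Phi}\|_\infty = \delta\|\Phi(e_{-1})\|_\infty$. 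Combining the three steps yields
\begin{align*}
\inf\{c>0 : cE_{\cN} - \Phi \text{ is completely positive}\} = \|h_\Phi\|_\infty = \delta\|\widehat{\Phi}\|_\infty = \delta^2\|\Phi(e_{-1})\|_\infty,
\end{align*}
as claimed.

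The main obstacle is not any single hard estimate but the careful bookkeeping of the two factors of $\delta$ — one entering through the normalization $h_\Phi = \delta\widehat{\Phi}$ and the other through $\gamma^{-1}(\widehat{\Phi}) = \delta\Phi(e_{-1})$ — together with the verification that each transport step is valid: the identification of the infimum with $\|h_\Phi\|_\infty$, and the norm-preservation under $\gamma$. The latter relies crucially on $\gamma$ being a genuine $*$-isomorphism of von Neumann algebras, which is precisely where the hypothesis that $\cN\subset\cM$ admits a downward Jones basic construction enters.
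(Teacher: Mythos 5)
Your proof is correct, but it takes a genuinely different route from the paper's. The paper argues directly at the level of positive elements: using the Pimsner--Popa matrix trick, every $a\in\cM_+$ can be written as $\sum_i n_i^*e_{-1}n_i$ with $n_i\in\cN$, whence the bimodule property gives $\Phi(a)\leq\delta^2\|\Phi(e_{-1})\|_\infty E_{\cN}(a)$ since $E_{\cN}(e_{-1})=\delta^{-2}$; tensoring with $M_n(\mathbb{C})$ (which preserves the downward basic construction) upgrades positivity to complete positivity, and evaluating at $e_{-1}$ shows the constant is optimal. You instead transport the problem through the correspondence machinery: the infimum equals $\|h_{\Phi,E_{\cN}}\|_\infty$ by the two directions of Lemma \ref{lemma:RN} (a clean observation the paper never states explicitly but which is fully justified by that lemma's proof), then $h_\Phi=\delta\widehat{\Phi}$ and $\gamma^{-1}(\widehat{\Phi})=\delta\Phi(e_{-1})$ with $\gamma$ an isometric $*$-isomorphism. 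Each step checks out, including your bookkeeping of the two factors of $\delta$, and there is no circularity even though Theorem \ref{thm:: Fourier multiplier is derivative} is stated after this proposition in the paper, since its proof is independent and you in any case rederive $h_\Phi=\delta\widehat{\Phi}$ from Proposition \ref{prop:: characterize the state omega} and the uniqueness clause of Lemma \ref{lemma:RN}. What the paper's approach buys is self-containedness and a byproduct of independent interest, namely that positivity and complete positivity coincide for $\cN$-bimodule maps in the presence of a downward basic construction; what yours buys is brevity given the surrounding machinery and a conceptually transparent explanation of where each power of $\delta$ comes from.
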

\begin{proof}
    We first prove that the map $\delta^2\|\Phi(e_{-1})\|_{\infty}E_{\cN}-\Phi$ is positive. 
    Recall that by the matrix trick as in Proposition 2.1 of \cite{PP1986}, any positive operator in $\cM$ is of the form $a=\sum^m_{i=1}n^*_ie_{-1}n_i$ with $n_i\in \cN$. Therefore
    \begin{align*}
        \Phi(a)&=\sum^m_{i=1}\Phi(n^*_ie_{-1}n_i)=\sum^m_{i=1}n^*_i\Phi(e_{-1})n_i\\
        &\leq\|\Phi(e_{-1})\|_{\infty}\sum^m_{i=1}n^*_in_i\\
        &=\delta^2\|\Phi(e_{-1})\|_{\infty}\sum^m_{i=1}n^*_iE_{\cN}(e_{-1})n_i\\
        &=\delta^2\|\Phi(e_{-1})\|_{\infty}E_{\cN}(a),
    \end{align*}
    so $\delta^2\|\Phi(e_{-1})\|_{\infty}E_{\cN}-\Phi$ is positive. 

    Now consider the case $\Phi\otimes id_{n}$, which is a completely positive $\cN\otimes M_{n}(\mathbb{C})$-bimodule map on $\cM\otimes M_{n}(\mathbb{C})$. 
    We notice that $e_{-1}\otimes I_{n}$ is a Jones projection in $\cM\otimes M_{n}(\mathbb{C})$. 
    Thus by replacing $e_{-1}$ with $e_{-1}\otimes I_{n}$ and choosing $n_i$ to be in $\cN\otimes M_{n}(\mathbb{C})$, the above argument applies also to $\left( \delta^2\|\Phi(e_{-1})\|_{\infty}E_{\cN}-\Phi\right)\otimes id_{n}$, showing that it is a positive map for each $n$.

    Since the tensor product preserves downward Jones basic construction, a $\cN$-$\cN$-bimodule map on $\cM$ is positive if and only if it is completely positive. 
 Moreover, we have  
    \begin{align*}
        &\inf\{c>0|cE_{\cN}-\Phi\text{ is completely positive}\}\\
        &= \inf\{c>0|cE_{\cN}-\Phi\text{ is positive}\}\\
        &= \inf\{c>0|cE_{\cN}(e_{-1})-\Phi(e_{-1})\geq 0\}\\
        &= \delta^2\|\Phi(e_{-1})\|_{\infty}.
    \end{align*}
    This completes the proof.
\end{proof}
\begin{remark}
    Notice that the Pimsner-Popa inequality is a special case of this proposition with $\Phi=id_{\cM}$. 
    That is, the constant
    \begin{align*}
        \delta^{-2}\|id_{\cM}(e_{-1})\|_{\infty}=\delta^{-2}=\lambda(\cM,\cN)
    \end{align*}
    is the largest among all $\lambda>0$ such that $\lambda\cdot id_{\cM}\preccurlyeq E_{\cN}$. 
    Here $\lambda(\cM,\cN)$ is called the Pimsner-Popa index.
\end{remark}
\begin{theorem}\label{thm:: Fourier multiplier is derivative}
    Let $\cN\subset\cM$ be a finite inclusion of finite von Neumann algebras. 
    Suppose $\Phi\in \mathbf{CP}_{\cN}(\cM)$.
    Then under the natural isomorphism between 
    $\End(_{\cM}\mathcal{H}^{E_{\cN}}_{\cM})$ and $\cM'\cap\cM_2$ from Proposition \ref{prop::bimodule of a condiitonal expectation}, we have
\begin{align*}
    h_{\Phi}=\delta\widehat{\Phi}.
\end{align*}
\end{theorem}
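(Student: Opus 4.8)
The plan is to identify both $h_{\Phi}$ and $\delta\widehat{\Phi}$ as positive operators in $\End(_{\cM}\mathcal{H}^{E_{\cN}}_{\cM})$ that implement $\Phi$ in the normal form of Lemma \ref{lemma:RN}, and then to invoke the uniqueness clause of that lemma. Since $\Phi\in\mathbf{CP}_{\cN}(\cM)$ satisfies $\Phi\preccurlyeq E_{\cN}$ by Remark \ref{rmk:: E_N dominates bimodule maps}, the derivative $h_{\Phi}=h_{\Phi,E_{\cN}}$ is well defined; by Definition \ref{def::2} together with Lemma \ref{lemma:RN}(2), it is the unique positive element of $\End(_{\cM}\mathcal{H}^{E_{\cN}}_{\cM})$ satisfying $\Phi(x)=v^*_{\cN}\,\pi_{E_{\cN}}(x)\,h_{\Phi}\,v_{\cN}$ for every $x\in\cM$.

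First I would transport the data to the concrete bimodule $L^2(\cM)\otimes_{\cN}L^2(\cM)$ through the isomorphism $\iota$ of Proposition \ref{prop::bimodule of a condiitonal expectation}. Under $\iota$ the isometry $v_{\cN}$ becomes $y\Omega\mapsto\Omega\otimes_{\cN}y\Omega$, the left action $\pi_{E_{\cN}}(x)$ becomes left multiplication $x\otimes_{\cN}1$ on the first leg, and $\End(_{\cM}\mathcal{H}^{E_{\cN}}_{\cM})$ is identified with $\End(_{\cM}L^2(\cM)\otimes_{\cN}L^2(\cM)_{\cM})=\cM'\cap\cM_2$, which is exactly the algebra containing the Fourier multiplier $\widehat{\Phi}$.

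Next I would quote Proposition \ref{prop:: characterize the state omega}(1), which already records $\Phi(x)=\delta\,v^*_{\cN}\,(x\otimes_{\cN}1)\,\widehat{\Phi}\,v_{\cN}=\delta\,v^*_{\cN}\,\pi_{E_{\cN}}(x)\,\widehat{\Phi}\,v_{\cN}$ for all $x\in\cM$. Because $\Phi$ is completely positive, $\widehat{\Phi}$ is a positive operator, so $\delta\widehat{\Phi}$ is a positive element of $\End(_{\cM}\mathcal{H}^{E_{\cN}}_{\cM})$ implementing $\Phi$ in precisely the form $\Phi(x)=v^*_{\cN}\,\pi_{E_{\cN}}(x)\,(\delta\widehat{\Phi})\,v_{\cN}$ demanded by Lemma \ref{lemma:RN}(2). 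Comparing this with the characterization of $h_{\Phi}$ above and applying the uniqueness assertion of Lemma \ref{lemma:RN} forces $h_{\Phi}=\delta\widehat{\Phi}$.

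Since the whole argument amounts to matching two already-established formulas, I do not expect a genuine obstacle; the care lies entirely in the bookkeeping. I must confirm that the implementing operator sits in the same slot—to the right of $\pi_{E_{\cN}}(x)$—in both expressions, which is automatic here because $\widehat{\Phi}$ and $h_{\Phi}$ each lie in the relative commutant of the left $\cM$-action and hence commute with $\pi_{E_{\cN}}(x)$; and I must use that the positivity of $\widehat{\Phi}$, valid precisely because $\Phi$ is completely positive, is what licenses the uniqueness clause, which is phrased only for positive implementing operators. This identification simultaneously vindicates the diagrammatic assertion of Section 2.4 that $\mathcal{F}^{-1}(\Phi)$ is the derivative of $\Phi$.
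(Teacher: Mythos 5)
Your argument is correct and is essentially the paper's proof in streamlined form: the paper likewise identifies $\delta\widehat{\Phi}$ as a positive element of $\End(_{\cM}\mathcal{H}^{E_{\cN}}_{\cM})\cong\cM'\cap\cM_2$ implementing $\Phi$ (verifying $\delta v^*_{E_{\cN},\tau_{\cM}}\widehat{\Phi}u_{E_{\cN},\tau_{\cM}}=V^{\tau_{\cM}}_{\Phi}$ both diagrammatically and with a Pimsner--Popa basis, which is the same computation underlying Proposition \ref{prop:: characterize the state omega}(1)) and then invokes uniqueness of the derivative. The only cosmetic difference is that you appeal to the uniqueness clause of Lemma \ref{lemma:RN} directly, while the paper routes through the equivalent uniqueness statement of Proposition \ref{prop:: The derivative implements Phi}.
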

\begin{proof}
    Since $\Phi\preccurlyeq E_{\cN}$, the derivative $h_{\Phi}$ exists. 
    Recall that in $\mathscr{P}^{\cN\subset\cM}$ the tangle 
    $\vcenter{\hbox{\begin{tikzpicture}[scale=0.6]
        \begin{scope}
        \ThinLine (1.6,0)--(1.6, 0.5) ..
        controls +(0,0.4) and +(0,0.4) .. (1,0.5)
        -- (1,0);
        \FillGrey (1.6,0)--(1.6, 0.5) ..
        controls +(0,0.4) and +(0,0.4) .. (1,0.5)
        -- (1,0);
        \ThinLine (0.3,0)
        -- (0.3,1);
        \FillGrey (-0.3,0)--(0.3,0)--(0.3,1)--(-0.3,1); 
        \end{scope}
    \end{tikzpicture}}}$ represents the bimodule morphism $_{\cM}\cM_{\cN}\ni x\mapsto \delta^{1/2}x\otimes_{\cN}1\in _{\cM}\cM\otimes_{\cN}\cM_{\cN}$, and 
    $\vcenter{\hbox{\begin{tikzpicture}[scale=0.6, rotate = 180]
        \begin{scope}
        \ThinLine (1.6,0)--(1.6, 0.5) ..
        controls +(0,0.4) and +(0,0.4) .. (1,0.5)
        -- (1,0);
        \FillGrey (1.6,0)--(1.6, 0.5) ..
        controls +(0,0.4) and +(0,0.4) .. (1,0.5)
        -- (1,0);
        \ThinLine (0.3,0)
        -- (0.3,1);
        \FillGrey (-0.3,0)--(0.3,0)--(0.3,1)--(-0.3,1); 
        \end{scope}
    \end{tikzpicture}}}$ the bimodule morphism $_{\cN}\cM\otimes_{\cN}\cM_{\cM} \ni x\otimes_{\cN}y\to \delta^{1/2} E_{\cN}(x)y\in _{\cN}\cM_{\cM}$. 
    Therefore with Proposition \ref{prop::bimodule of a condiitonal expectation}, we obtain:
    \begin{align*}
        \vcenter{\hbox{\begin{tikzpicture}[scale=0.6]
            \begin{scope}
            \ThinLine (1.6,0)--(1.6, 0.5) ..
            controls +(0,0.4) and +(0,0.4) .. (1,0.5)
            -- (1,0);
            \FillGrey (1.6,0)--(1.6, 0.5) ..
            controls +(0,0.4) and +(0,0.4) .. (1,0.5)
            -- (1,0);
            \ThinLine (0.3,0)
            -- (0.3,1);
            \FillGrey (-0.3,0)--(0.3,0)--(0.3,1)--(-0.3,1); 
            \end{scope}
        \end{tikzpicture}}}=\delta^{1/2}u_{E_{\cN},\tau_{\cM}},\quad 
        \vcenter{\hbox{\begin{tikzpicture}[scale=0.6, rotate = 180]
            \begin{scope}
            \ThinLine (1.6,0)--(1.6, 0.5) ..
            controls +(0,0.4) and +(0,0.4) .. (1,0.5)
            -- (1,0);
            \FillGrey (1.6,0)--(1.6, 0.5) ..
            controls +(0,0.4) and +(0,0.4) .. (1,0.5)
            -- (1,0);
            \ThinLine (0.3,0)
            -- (0.3,1);
            \FillGrey (-0.3,0)--(0.3,0)--(0.3,1)--(-0.3,1); 
            \end{scope}
        \end{tikzpicture}}}=\delta^{1/2}v^*_{E_{\cN},\tau_{\cM}}.
    \end{align*}
    Consequently by Proposition \ref{prop:: The derivative implements Phi}: 
    \begin{align*}
        \delta^{-1}\vcenter{\hbox{\begin{tikzpicture}[scale=0.6]
            \begin{scope}
            \ThinLine (1.6,-0.5)--(1.6, 1) ..
            controls +(0,0.4) and +(0,0.4) .. (1,1)
            -- (1,-0.5);
            \FillGrey (1.6,-0.5)--(1.6, 1) ..
            controls +(0,0.4) and +(0,0.4) .. (1,1)
            -- (1,-0.5);
            \ThinLine (-0.3,1.5)--(-0.3,0) ..
            controls +(0,-0.4) and +(0,-0.4) .. (0.3,0)
            -- (0.3,1.5);
            \FillGrey (-0.3,1.5)--(-0.3,0) ..
            controls +(0,-0.4) and +(0,-0.4) .. (0.3,0)
            -- (0.3,1.5);
            \draw[fill=white] (0,0) rectangle (1.3,1);
            \node[scale = 0.9] at (0.65,0.5) {$h_{\Phi}$};
            \end{scope}
        \end{tikzpicture}}} = v^*_{E_{\cN},\tau_{\cM}}h_{\Phi}u_{E_{\cN},\tau_\cM}=V^{\tau_{\cM}}_{\Phi}=
        \vcenter{\hbox{\begin{tikzpicture}
            \begin{scope}[scale=0.6]
            \FillGrey (0.3,-0.5) -- (1,-0.5)
            -- (1,1.5) -- (0.3,1.5);
            \VeryThinLine (0.3,-0.5) -- (0.3,1.5);
            \VeryThinLine (1,-0.5) -- (1,1.5);
            \draw[fill=white] (0,0) rectangle (1.3,1);
            \node at (0.65,0.5) {$\Phi$};
            \end{scope}
        \end{tikzpicture}}}.
    \end{align*}
    Therefore by planar isotopy, we have
    \begin{align*}
        \vcenter{\hbox{\begin{tikzpicture}
            \begin{scope}[scale=0.5]
            \FillGrey (-0.3,-0.5) -- (0.3,-0.5)
            -- (0.3,1.5) -- (-0.3,1.5);
            \FillGrey (1,-0.5) -- (1.6,-0.5)
            -- (1.6,1.5) -- (1,1.5);
            \VeryThinLine (0.3,-0.5) -- (0.3,1.5);
            \VeryThinLine (1,-0.5) -- (1,1.5);
            \draw[fill=white] (0,0) rectangle (1.3,1);
            \node at (0.65,0.5) {$h_{\Phi}$};
            \end{scope}
          \end{tikzpicture}}} = 
          \delta \vcenter{\hbox{\begin{tikzpicture}[scale=0.6]
            \begin{scope}
            \ThinLine (2.2,2) -- (2.2,-0.5) .. controls +(0,-0.4) and +(0,-0.4) .. (1.6,-0.5)--(1.6, 1) ..
            controls +(0,0.4) and +(0,0.4) .. (1,1)
            -- (1,-1);
            \FillGrey (1,-1) -- (2.8,-1) --(2.8,2) -- (2.2,2) -- (2.2,-0.5) .. controls +(0,-0.4) and +(0,-0.4) .. (1.6,-0.5)--(1.6, 1) ..
            controls +(0,0.4) and +(0,0.4) .. (1,1)
            -- (1,-1);
            \ThinLine (-0.9,-1)--(-0.9,1.5).. controls +(0,0.4) and +(0,0.4).. (-0.3,1.5)--(-0.3,0) ..
            controls +(0,-0.4) and +(0,-0.4) .. (0.3,0)
            -- (0.3,2);
            \FillGrey (0.3,2) -- (-1.5,2) -- (-1.5,-1) -- (-0.9,-1)--(-0.9,1.5).. controls +(0,0.4) and +(0,0.4).. (-0.3,1.5)--(-0.3,0) ..
            controls +(0,-0.4) and +(0,-0.4) .. (0.3,0)
            -- (0.3,2);
            \draw[fill=white] (0,0) rectangle (1.3,1);
            \node[scale = 0.9] at (0.65,0.5) {$h_{\Phi}$};
            \end{scope}
        \end{tikzpicture}}} = 
        \delta\vcenter{\hbox{\begin{tikzpicture}[scale=0.5]
            \begin{scope}
                \FillGrey (-0.6,-0.5) -- (1.9,-0.5)
            -- (1.9,1.5) -- (-0.6,1.5);
            \ThinLine (0.3,-0.5)--(0.3, 1) ..
            controls +(0,0.4) and +(0,0.4) .. (-0.3,1)
            -- (-0.3,-0.5);
            \FillWhite (0.3,-0.5)--(0.3, 1) ..
            controls +(0,0.4) and +(0,0.4) .. (-0.3,1)
            -- (-0.3,-0.5)--(0.3,-0.5);
            \ThinLine (1,1.5)--(1,0) ..
            controls +(0,-0.4) and +(0,-0.4) .. (1.6,0)
            -- (1.6,1.5);
            \FillWhite (1,1.5)--(1,0) ..
            controls +(0,-0.4) and +(0,-0.4) .. (1.6,0)
            -- (1.6,1.5)-- (1,1.5);
            \draw[fill=white] (0,0) rectangle (1.3,1);
            \node at (0.65,0.5) {$\Phi$};
            \end{scope}
        \end{tikzpicture}}} = \delta\widehat{\Phi}.
    \end{align*}
    Algebraically, we check that for all $x\in\cM$, 
    \begin{align*}
        \delta v^*_{E_{\cN},\tau_{\cM}} \widehat{\Phi} u_{E_{\cN},\tau_{\cM}} (x\Omega_{\cM}) &= \sum^n_{j=1}v^*_{E_{\cN},\tau_{\cM}} \big(x \eta_j \Omega_{\cM}\otimes_{\cN} \Phi(\eta^*_j) \Omega_{\cM}\big)\\
        &= \sum^n_{j=1} \Phi(E_{\cN}(x\eta_j)\eta^*_j)\Omega_{\cM} = \Phi(x)\Omega_{\cM}.
    \end{align*}
    Hence by uniqueness of $h_{\Phi}$ as in Proposition \ref{prop:: The derivative implements Phi} the result follows. 
\end{proof}

Suppose $\Phi_1\preccurlyeq \Phi_2$ and $\Psi_1\preccurlyeq \Psi_2$ are normal completely positive and we are allowed to compose $\Phi_1,\Phi_2$ with $\Psi_1,\Psi_2$. 
It follows that $\Phi_1\Phi_2\preccurlyeq \Psi_1\Psi_2$, hence $h_{\Phi_1\Phi_2,\Psi_1\Psi_2}$ exists. 
In the following we will prove a formula expressing $h_{\Phi_1\Phi_2,\Psi_1\Psi_2}$ in terms of $h_{\Phi_1,\Phi_2}$ and $h_{\Psi_1,\Psi_2}$. 

First we briefly recall from \cite{Takesaki2003} relative tensor product of bimodules over von Neumann algebras.
Suppose $H_{\mathcal{B}}$ and $_{\mathcal{B}}K$are $\mathcal{B}$-modules, and $\varphi$ is a  faithful normal state on $\mathcal{B}$. A vector $\xi\in H$ is called $\varphi$-bounded if the densely defined operator 
\begin{align*}
   L_{\varphi}(\xi):L^2(\mathcal{B},\varphi)\ni\Omega_{\varphi}b\mapsto \xi\cdot b\in H 
\end{align*}
admits a bounded extension, which is still denoted as $L_{\varphi}(\xi)$. 
Denote the dense suspace of $\varphi$-bounded vectors in $H_{\mathcal{B}}$ as $\mathfrak{D}(H,\varphi)$. 
Define a positive bilinear form on $\mathfrak{D}(H,\varphi)\otimes K$ as
\begin{align*}
    \left\langle \xi_1\otimes\eta_1,\xi_2\otimes\eta_2\right\rangle _{\mathcal{B},\varphi}=\left\langle \pi_K(L^*_{\varphi}(\xi_2)L_{\varphi}(\xi_1))\eta_1,\eta_2\right\rangle _K.
\end{align*}
The \textbf{relative tensor product} of $H$ and $K$ over $\mathcal{B}$ with respect to $\varphi$, denoted as $H\otimes_\varphi K$, will be the completion of $\mathfrak{D}(H,\varphi)\otimes K/\ker\left\langle\cdot,\cdot\right\rangle _{\mathcal{B},\varphi}$ with respect to the norm induced by the bilinear form. 
The equivalence class of a vector $\xi\otimes\eta\in \mathfrak{D}(H,\varphi)\otimes K$ will be denoted as $\xi\otimes_\varphi \eta$. 
In case where $H$ is a $\mathcal{A}$-$\mathcal{B}$-bimodule and $K$ is a $\mathcal{B}$-$\mathcal{C}$-bimodule, the left (right) actions of $\mathcal{A}$ ($\mathcal{C}$) descend to $H\otimes_\varphi K$, making it a $\mathcal{A}$-$\mathcal{C}$-bimodule. 
The map $\iota: _\mathcal{B}K\rightarrow _\mathcal{B}L^2(\mathcal{B})\otimes_\varphi K$ defined as $\iota(b\cdot\eta)=b\Omega_\varphi\otimes_\varphi\eta$ extends to a unitary intertwiner of left $\mathcal{B}$-representations.\\
For $x\in \text{End}(_\mathcal{A}H_\mathcal{B})$ and $y\in \text{End}(_\mathcal{B}K_\mathcal{C})$, the operator $x\otimes_{\varphi} y\in \text{End}(_\mathcal{A}H\otimes_{\varphi} K_\mathcal{C})$ is defined as 
$$(x\otimes_{\varphi} y)(\xi\otimes_\varphi \eta)=x\xi\otimes_\varphi y\eta,$$
and is called the tensor product of $x$ and $y$.

\begin{lemma}\label{lemma:minimal}
    Suppose $ \Psi_1:\mathcal{B}\rightarrow\mathcal{C}$ and $\Psi_2: \mathcal{A}\to\mathcal{B}$ are normal completely positive. Fix $\varphi$ to be a faithful normal state on $\mathcal{B}$. Then there is an isometry $\mathcal{Y}\in \Hom (_{\mathcal{A}}\mathcal{H}^{\Psi_1\Psi_2}_{\mathcal{C}},_{\mathcal{A}}\mathcal{H}^{\Psi_2}\otimes_{\varphi}\mathcal{H}^{\Psi_1}_{\mathcal{C}})$ satisfying $\mathcal{Y}(\Omega_{\Psi_1\Psi_2,\phi})=\Omega_{\Psi_2,\varphi}\otimes_{\varphi}\Omega_{\Psi_1,\phi}$. 
    Consequently 
    \begin{align}
        \mathcal{Y}v_{\Psi_1\Psi_2}=(v_{\Psi_2}\otimes_{\varphi} id_{\mathcal{H}^{\Psi_1}})v_{\Psi_1}.
    \end{align}
\end{lemma}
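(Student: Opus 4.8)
The plan is to write down the only possible candidate for $\mathcal{Y}$ on the cyclic vector, propagate it through the bimodule structure, and then verify that it is inner-product preserving. Since $\Omega_{\Psi_1\Psi_2,\phi}=[1_{\mathcal{A}}\otimes\Omega_\phi]_{\Psi_1\Psi_2}$ is cyclic for the $\mathcal{A}$-$\mathcal{C}$ action, the vectors $[a\otimes\Omega_\phi c]_{\Psi_1\Psi_2}=\pi_{\Psi_1\Psi_2}(a)\pi'_{\Psi_1\Psi_2}(c)\Omega_{\Psi_1\Psi_2,\phi}$, with $a\in\mathcal{A}$ and $c\in\mathcal{C}$, span a dense subspace of $\mathcal{H}^{\Psi_1\Psi_2}$. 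On this subspace I would define, by linear extension,
\[
\mathcal{Y}_0\big([a\otimes\Omega_\phi c]_{\Psi_1\Psi_2}\big)=[a\otimes\Omega_\varphi]_{\Psi_2}\otimes_{\varphi}[1_{\mathcal{B}}\otimes\Omega_\phi c]_{\Psi_1};
\]
this is forced by asking $\mathcal{Y}$ to be an $\mathcal{A}$-$\mathcal{C}$-bimodule map carrying $\Omega_{\Psi_1\Psi_2,\phi}$ to $\Omega_{\Psi_2,\varphi}\otimes_{\varphi}\Omega_{\Psi_1,\phi}$.

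The crux is to show $\mathcal{Y}_0$ is isometric, which simultaneously yields well-definedness (anything in the kernel of the quotient map goes to $0$). Writing $\xi_i=[a_i\otimes\Omega_\varphi]_{\Psi_2}$ and $\eta_i=[1_{\mathcal{B}}\otimes\Omega_\phi c_i]_{\Psi_1}$, I first check each $\xi_i$ is $\varphi$-bounded: since $\|L_\varphi(\xi_i)\Omega_\varphi b\|^2=\langle\pi_\varphi(\Psi_2(a_i^*a_i))\Omega_\varphi b,\Omega_\varphi b\rangle\le\|\Psi_2(a_i^*a_i)\|\,\|\Omega_\varphi b\|^2$, the operator $L_\varphi(\xi_i)$ is bounded and the relative-tensor vectors are legitimate. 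Evaluating matrix elements on the dense set $\{\Omega_\varphi b: b\in\mathcal{B}\}$ then identifies $L_\varphi^*(\xi_2)L_\varphi(\xi_1)\in\End(L^2(\mathcal{B},\varphi)_{\mathcal{B}})$ with the left multiplication $\pi_\varphi(\Psi_2(a_2^*a_1))$. Substituting into the definition of the relative-tensor inner product and applying the left $\mathcal{B}$-action $\pi_{\Psi_1}$ to $\eta_1$ gives
\[
\langle\mathcal{Y}_0\zeta_1,\mathcal{Y}_0\zeta_2\rangle=\big\langle\pi_\phi\big(\Psi_1(\Psi_2(a_2^*a_1))\big)\Omega_\phi c_1,\Omega_\phi c_2\big\rangle,
\]
which equals $\langle[a_1\otimes\Omega_\phi c_1]_{\Psi_1\Psi_2},[a_2\otimes\Omega_\phi c_2]_{\Psi_1\Psi_2}\rangle$ precisely because $\Psi_1\Psi_2=\Psi_1\circ\Psi_2$. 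Hence $\mathcal{Y}_0$ descends to an isometry $\mathcal{Y}$ and extends by continuity; that it intertwines the $\mathcal{A}$- and $\mathcal{C}$-actions and sends $\Omega_{\Psi_1\Psi_2,\phi}$ to $\Omega_{\Psi_2,\varphi}\otimes_{\varphi}\Omega_{\Psi_1,\phi}$ is immediate from the definition of $\mathcal{Y}_0$.

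For the stated consequence I would use that $v_{\Psi_1\Psi_2}$, $v_{\Psi_1}$, and $v_{\Psi_2}\otimes_{\varphi}id_{\mathcal{H}^{\Psi_1}}$ are all right-$\mathcal{C}$-module intertwiners, so it suffices to compare $\mathcal{Y}v_{\Psi_1\Psi_2}$ and $(v_{\Psi_2}\otimes_{\varphi}id_{\mathcal{H}^{\Psi_1}})v_{\Psi_1}$ on the cyclic vector $\Omega_\phi$. The left side gives $\mathcal{Y}\Omega_{\Psi_1\Psi_2,\phi}=\Omega_{\Psi_2,\varphi}\otimes_{\varphi}\Omega_{\Psi_1,\phi}$. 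For the right side, $v_{\Psi_1}\Omega_\phi=\Omega_{\Psi_1,\phi}$, and under the canonical identification $\iota:{}_{\mathcal{B}}\mathcal{H}^{\Psi_1}\cong{}_{\mathcal{B}}L^2(\mathcal{B},\varphi)\otimes_{\varphi}\mathcal{H}^{\Psi_1}$ one has $\Omega_{\Psi_1,\phi}\mapsto\Omega_\varphi\otimes_{\varphi}\Omega_{\Psi_1,\phi}$, so applying $v_{\Psi_2}\otimes_{\varphi}id$ returns $v_{\Psi_2}\Omega_\varphi\otimes_{\varphi}\Omega_{\Psi_1,\phi}=\Omega_{\Psi_2,\varphi}\otimes_{\varphi}\Omega_{\Psi_1,\phi}$. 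The two agree on $\Omega_\phi$, hence everywhere by cyclicity.

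The step I expect to be the main obstacle is the isometry computation inside the non-tracial relative tensor product: one must verify that $[a\otimes\Omega_\varphi]_{\Psi_2}$ genuinely lies in $\mathfrak{D}(\mathcal{H}^{\Psi_2},\varphi)$ and that $L_\varphi^*(\xi_2)L_\varphi(\xi_1)$ is the honest left multiplication $\pi_\varphi(\Psi_2(a_2^*a_1))$ and not some modular twist of it. Computing the matrix elements directly on $\{\Omega_\varphi b\}$ sidesteps the modular subtleties, and the associativity identity $\Psi_1(\Psi_2(\cdot))=\Psi_1\Psi_2(\cdot)$ is exactly what forces the two inner products to coincide.
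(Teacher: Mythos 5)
Your proposal is correct and follows essentially the same route as the paper: define $\mathcal{Y}$ on the dense subspace generated by the cyclic vector under the $\mathcal{A}$-$\mathcal{C}$ action, verify inner-product preservation by computing $L_\varphi^*(\xi_2)L_\varphi(\xi_1)=\Psi_2(a_2^*a_1)$ in the relative tensor product and invoking $\Psi_1\circ\Psi_2=\Psi_1\Psi_2$, then extend by continuity and read off the intertwining identity on the cyclic vector. The only difference is that you spell out the $\varphi$-boundedness of $[a\otimes\Omega_\varphi]_{\Psi_2}$ and the identification of $L_\varphi^*(\xi_2)L_\varphi(\xi_1)$ with honest left multiplication, details the paper leaves implicit.
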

\begin{proof}
    We first show that the assignment $\mathcal{Y}: a\Omega_{\Psi_1\Psi_2}c\mapsto a\Omega_{\Psi_2}\otimes_{\varphi}\Omega_{\Psi_1} c$ preserves inner product.
    Indeed, for any $a_1,a_2\in\mathcal{A}$ and $c_1,c_2\in\mathcal{C}$:
    \begin{align*}
        &\left\langle a_1\Omega_{\Psi_2}\otimes_{\varphi}\Omega_{\Psi_1} c_1,a_2\Omega_{\Psi_2}\otimes_{\varphi}\Omega_{\Psi_1} c_2\right\rangle\\
        &=\left\langle L^*_{\varphi}(\Omega_{\Psi_2})a^*_2a_1L_{\varphi}(\Omega_{\Psi_2})\Omega_{\Psi_1}c_1, \Omega_{\Psi_1}c_2\right\rangle\\
        &=\left\langle \Psi_2(a^*_2a_1)\Omega_{\Psi_1}c_1, \Omega_{\Psi_1}c_2\right\rangle\\
        &=\left\langle \Psi_1(\Psi_2(a^*_2a_1))\Omega_{\phi}c_1,\Omega_{\phi}c_2\right\rangle\\
        &=\left\langle a_1\Omega_{\Psi_1\Psi_2} c_1,a_2\Omega_{\Psi_1\Psi_2}c_2\right\rangle.
    \end{align*}
    Since $\Omega_{\Phi}$ is cyclic in $\mathcal{H}^{\Psi_1\Psi_2}$, $\mathcal{Y}$ extends linearly to an isometric bimodule map from $\mathcal{H}^{\Psi_1\Psi_2}$ into $\mathcal{H}^{\Psi_2}\otimes_{\varphi}\mathcal{H}^{\Psi_1}$.
    By the definition of $v_{\Psi}$, we have the equation $\mathcal{Y}v_{\Psi_1\Psi_2}=(v_{\Psi_2}\otimes_{\varphi} id_{\mathcal{H}^{\Psi_1}})v_{\Psi_1}$. 
\end{proof}
\begin{proposition}\label{prop::convolution}
    Under the same assumption as in Lemma \ref{lemma:minimal}, suppose $\Phi_1\preccurlyeq \Psi_1$ and $\Phi_2\preccurlyeq \Psi_2$ are normal completely positive. 
    Then $\Phi_1\Phi_2\preccurlyeq\Psi_1\Psi_2$ and
    \begin{align*}
        h_{\Phi_1\Phi_2,\Psi_1\Psi_2}=\mathcal{Y}^*(h_{\Phi_2,\Psi_2}\otimes_{\varphi} h_{\Phi_1,\Psi_1})\mathcal{Y}.
    \end{align*}
\end{proposition}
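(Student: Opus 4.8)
The plan is to avoid proving $\Phi_1\Phi_2\preccurlyeq\Psi_1\Psi_2$ separately and instead verify in one stroke that the single operator
\[
    h := \mathcal{Y}^*\bigl(h_{\Phi_2,\Psi_2}\otimes_\varphi h_{\Phi_1,\Psi_1}\bigr)\mathcal{Y}
\]
satisfies the characterization of the derivative in Lemma \ref{lemma:RN}; by the implication (2)$\Rightarrow$(1) of that lemma this will simultaneously give $\Phi_1\Phi_2\preccurlyeq\Psi_1\Psi_2$ and the claimed formula. First I would record that $h_{\Phi_2,\Psi_2}\otimes_\varphi h_{\Phi_1,\Psi_1}$ is a positive element of $\End(_\mathcal{A}\mathcal{H}^{\Psi_2}\otimes_\varphi\mathcal{H}^{\Psi_1}_\mathcal{C})$, positivity being seen by writing it as $(h_{\Phi_2,\Psi_2}^{1/2}\otimes_\varphi h_{\Phi_1,\Psi_1}^{1/2})^*(h_{\Phi_2,\Psi_2}^{1/2}\otimes_\varphi h_{\Phi_1,\Psi_1}^{1/2})$. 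Since $\mathcal{Y}$ is an isometric $\mathcal{A}$-$\mathcal{C}$-bimodule map, conjugation by $\mathcal{Y}$ preserves positivity, and because $\mathcal{Y}$ intertwines the left $\mathcal{A}$- and right $\mathcal{C}$-actions, $h$ lands in $\End(_\mathcal{A}\mathcal{H}^{\Psi_1\Psi_2}_\mathcal{C})$.

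The core computation is to evaluate $v_{\Psi_1\Psi_2}^*\,h\,\pi_{\Psi_1\Psi_2}(a)\,v_{\Psi_1\Psi_2}$. Using that $h$ commutes with $\pi_{\Psi_1\Psi_2}(a)$ together with the intertwining relation $\mathcal{Y}v_{\Psi_1\Psi_2}=(v_{\Psi_2}\otimes_\varphi id)v_{\Psi_1}$ from Lemma \ref{lemma:minimal} and its adjoint, this expression becomes
\[
    v_{\Psi_1}^*(v_{\Psi_2}^*\otimes_\varphi id)(h_{\Phi_2,\Psi_2}\otimes_\varphi h_{\Phi_1,\Psi_1})(\pi_{\Psi_2}(a)\otimes_\varphi id)(v_{\Psi_2}\otimes_\varphi id)v_{\Psi_1},
\]
where $v_{\Psi_1}$ is read into $L^2(\mathcal{B},\varphi)\otimes_\varphi\mathcal{H}^{\Psi_1}$ via $\iota$. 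Here I would invoke that $v_{\Psi_2}$ is a right $\mathcal{B}$-module intertwiner, so that the composition rule $(x_2\otimes_\varphi y_2)(x_1\otimes_\varphi y_1)=x_2x_1\otimes_\varphi y_2y_1$ for tensor products of intertwiners applies. Collapsing the first tensor leg gives $v_{\Psi_1}^*\bigl((v_{\Psi_2}^* h_{\Phi_2,\Psi_2}\pi_{\Psi_2}(a)v_{\Psi_2})\otimes_\varphi h_{\Phi_1,\Psi_1}\bigr)v_{\Psi_1}$, and by Lemma \ref{lemma:RN} the first leg is precisely $\Phi_2(a)\in\mathcal{B}$.

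Finally I would transport back through $\iota\colon\mathcal{H}^{\Psi_1}\cong L^2(\mathcal{B},\varphi)\otimes_\varphi\mathcal{H}^{\Psi_1}$. Since $\iota$ intertwines the left $\mathcal{B}$-actions and satisfies $\iota^*(id\otimes_\varphi h_{\Phi_1,\Psi_1})\iota=h_{\Phi_1,\Psi_1}$, the operator $\Phi_2(a)\otimes_\varphi h_{\Phi_1,\Psi_1}$ corresponds to $\pi_{\Psi_1}(\Phi_2(a))h_{\Phi_1,\Psi_1}$ on $\mathcal{H}^{\Psi_1}$. Hence the whole expression equals $v_{\Psi_1}^*\pi_{\Psi_1}(\Phi_2(a))h_{\Phi_1,\Psi_1}v_{\Psi_1}=\Phi_1(\Phi_2(a))=(\Phi_1\Phi_2)(a)$, again by Lemma \ref{lemma:RN}. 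Uniqueness in that lemma then identifies $h$ with $h_{\Phi_1\Phi_2,\Psi_1\Psi_2}$.

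The main obstacle is the bookkeeping with the relative tensor product: one must verify carefully that $v_{\Psi_2}$ and $v_{\Psi_2}^*$ are genuine right $\mathcal{B}$-intertwiners so that $v_{\Psi_2}\otimes_\varphi id$ is well defined, that the composition rule holds for tensor products of intertwiners between distinct modules (not merely endomorphisms), and that operators supported on a single leg are correctly identified under $\iota$. Once these functoriality facts for the relative tensor product are in place, the algebra reduces exactly as above.
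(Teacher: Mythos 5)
Your proposal is correct and follows essentially the same route as the paper: evaluate $v_{\Psi_1\Psi_2}^*\,h\,\pi_{\Psi_1\Psi_2}(a)\,v_{\Psi_1\Psi_2}$ using the intertwining relation $\mathcal{Y}v_{\Psi_1\Psi_2}=(v_{\Psi_2}\otimes_{\varphi}id)v_{\Psi_1}$ from Lemma \ref{lemma:minimal}, collapse the first leg to $\Phi_2(a)$, and conclude by the uniqueness clause of Lemma \ref{lemma:RN}. The only differences are cosmetic: you make explicit the positivity of $h$ and the role of $\iota$ and of the implication (2)$\Rightarrow$(1) in establishing $\Phi_1\Phi_2\preccurlyeq\Psi_1\Psi_2$, points the paper leaves implicit.
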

\begin{proof}
    For any $ a\in\mathcal{A}$, by Lemma \ref{lemma:minimal}, 
    \begin{align*}
        &v_{\Psi_1\Psi_2}^*\big(\mathcal{Y}^*(h_{\Phi_2,\Psi_2}\otimes_{\varphi} h_{\Phi_1,\Psi_1})\mathcal{Y}\big)\pi_{\Psi_1\Psi_2}(a)v_{\Psi_1\Psi_2}\\
        &=v^*_{\Psi_1}(v^*_{\Psi_2}\otimes_{\varphi} id_{\mathcal{H}^{\Psi_1}})\pi_{\Psi_2}(a)\left(h_{\Phi_2,\Psi_2}\otimes_{\varphi} h_{\Phi_1,\Psi_1}\right)(v_{\Psi_2}\otimes_{\varphi} id_{\mathcal{H}^{\Psi_1}})v_{\Psi_1}\\
        &=v^*_{\Psi_1}(v^*_{\Psi_2}h_{\Phi_2,\Psi_2}^{1/2}\otimes_{\varphi} h_{\Phi,\Psi}^{1/2})\pi_{\Psi_2}(a)(h_{\Phi_2,\Psi_2}^{1/2}v_{\Psi_2}\otimes_{\varphi} h_{\Phi,\Psi}^{1/2})v_{\Psi_1}\\
        &=v^*_{\Psi_1}h_{\Phi,\Psi}^{1/2}\pi_{\Psi_1}(\Phi_2(a))h_{\Phi,\Psi}^{1/2}v_{\Psi_1}\\
        &=\Phi_1(\Phi_2(a)).
    \end{align*}
    Hence by the uniqueness of derivative in Proposition \ref{lemma:RN}, we have $h_{\Phi_1\Phi_2,\Psi_1\Psi_2}=\mathcal{Y}^*(h_{\Phi_2,\Psi_2}\otimes_{\varphi} h_{\Phi_1,\Psi_1})\mathcal{Y}$.
\end{proof}
    We explain the role of isometry $w$ appeared in Proposition \ref{prop::convolution} in connection with convolution studied in planar algebra settings. 
    Suppose now $\mathcal{A}=\mathcal{B}=\mathcal{C}=\cM$ is a II$_1$ factor and $\Psi_1=\Psi_2 =E_{\cN}$ is the trace-preserving conditional expectation down to a finite index subfactor. 
    Then $w\in \Hom (_{\cM}\cM\otimes_{\cN}\cM_{\cM},_{\cM}\cM\otimes_{\cN}\cM\otimes_{\cN}\cM_{\cM})$ is represented by the following diagram in $\mathscr{P}^{\cN\subset\cM}$:
    \begin{align*}
        \mathcal{Y} = \delta^{-1/2}\cdot \vcenter{\hbox{\begin{tikzpicture}[scale=0.6]
            \begin{scope}
            \ThinLine (0.6,0)--(0.6, 0.5) ..
            controls +(0,0.4) and +(0,0.4) .. (0,0.5)
            -- (0,0);
            \FillGrey (0.6,0)--(0.6, 0.5) ..
            controls +(0,0.4) and +(0,0.4) .. (0,0.5)
            -- (0,0);
            \ThinLine (-0.6,0) .. controls +(0,0.5) and +(0,-0.5) .. (0,1.5);
            \ThinLine (1.2,0) .. controls +(0,0.5) and +(0,-0.5) .. (0.6,1.5);
            \FillGrey (-1,0) -- (-0.6,0) .. controls +(0,0.5) and +(0,-0.5) .. (0,1.5) 
            -- (-1,1.5) -- (-1,0); 
            \FillGrey (1.6,0) -- (1.2,0) .. controls +(0,0.5) and +(0,-0.5) .. (0.6,1.5) 
            -- (1.6,1.5) -- (1.6,0); 
            \end{scope}
        \end{tikzpicture}}}
    \end{align*}
    As proved in Section 4 for $\Phi_1,\Phi_2\in \mathbf{CP}_{\cN}(\cM)$ we have $h_{\Phi_i} = \delta \widehat{\Phi}_i$. 
    Now Proposition \ref{prop::convolution} translates to the following equation of two boxes:
    \begin{align*}
        \widehat{\Phi_2\Phi_1} = \vcenter{\hbox{\begin{tikzpicture}
            \begin{scope}[scale=0.6]
        \FillWhite (-1.3,1.5) -- (-1.3, -0.5) -- (1.3, -0.5) -- (1.3, 1.5) -- (-1.3,1.5);
        \ThinLine (1.3, -0.5) -- (1.3, 1.5);
        \ThinLine (-1.3,-0.5) -- (-1.3,1.5);
        \ThinLine (0.6,1) .. controls +(0,0.4) and +(0,0.4) .. (-0.6,1);
        \ThinLine (0.6,0) .. controls +(0,-0.4) and +(0,-0.4) .. (-0.6,0);
        \FillGrey (0.6,1) .. controls +(0,0.4) and +(0,0.4) .. (-0.6,1)
        -- (-0.6,0) -- (-0.6,0) .. controls +(0,-0.4) and +(0,-0.4) .. (0.6,0)
        -- (0.6,1);
        \FillGrey (1.3,-0.5) -- (1.9,-0.5) -- (1.9,1.5) -- (1.3,1.5);
        \FillGrey (-1.9,-0.5) -- (-1.3,-0.5) -- (-1.3,1.5) -- (-1.9,1.5) -- (-1.9,-0.5);
        \draw[fill=white] (0.3,0) rectangle (1.6,1);
        \node at (0.95,0.5) {$\widehat{\Phi}_2$};
        \draw[fill=white] (-1.6,0) rectangle (-0.3,1);
        \node at (-0.95,0.5) {$\widehat{\Phi}_1$};
    \end{scope}
    \end{tikzpicture}}}.
    \end{align*}
    This is nothing but the fact that Fourier transform intertwines composition and convolution (c.f. Equation (2) \cite{JJLRW2020}). 


\section{Araki Relative Entropy for Completely Positive Maps}
In \cite{Araki1977}, Araki extends the relative entropy between density operators to positive linear functionals on arbitrary von Neumann algebras. 
Based on this notion together with Connes' correspondences (bimodules) we define and study the relative entropy between completely positive maps. 

Let us recall Araki's definition of relative entropy. 
Given normal normal positve linear functionals $\rho,\sigma$ on a von Neumman algebra $\mathcal{R}$ in its standard form, we use $p_{\sigma}$ and $p'_{\sigma}$ to represent the supports of $\sigma$ in $\mathcal{R}$ and $\mathcal{R}'$. 
Let $\xi_{\rho},\xi_{\sigma}$ be their unique representatives in the natural positive cone. 
When $p_{\rho}\leq p_{\sigma}$, the densely defined conjugate linear operator
\begin{align*}
    S_{\rho,\sigma}: x\xi_{\sigma}+\eta\mapsto p_{\sigma}x^*\xi_{\rho}, \quad  x\in\mathcal{R},\quad p'_{\sigma}\eta = 0.
\end{align*}
is closable (with closure still denoted as $S_{\rho,\sigma}$) and the relative modular operator is the positive selfadjoint operator $\Delta_{\varphi,\psi}=S^*_{\rho,\sigma}S_{\rho,\sigma}$. 
The relative entropy between $\rho,\sigma$ is defined as
\begin{align*}
    S(\rho,\sigma)=\left\langle \log \Delta_{\rho,\sigma}\xi_{\rho},\xi_{\rho} \right\rangle.
\end{align*}

Let $\mathcal{A},\mathcal{B}$ be von Neumann algebras and $\Psi\in\mathbf{CP}(\mathcal{A},\mathcal{B})$. 
We fix a faithful normal state $\varphi$ on $\mathcal{B}$ and consider the $\mathcal{A}$-$\mathcal{B}$ bimodule $\mathcal{H}^\Phi$. 
Denote the normal faitfhul positive linear functional on $\End(_{\mathcal{A}}\mathcal{H}^{\Psi}_{\mathcal{B}})$ implemented by $\Omega_{\Psi,\varphi}$ as $\omega_{\Psi,\varphi}$. 
Now let $\End(_{\mathcal{A}}\mathcal{H}^{\Phi}_{\mathcal{B}})$ be in its standard form and $\xi_{\Psi,\varphi}$ be the unique representative of $\omega_{\Psi,\varphi}$ in the natural positive cone. Denote the modular conjugation associated to $\omega_{\Psi,\varphi}$ as $J_{\Psi}$. 
For every $\Phi\in \mathbf{CP}(\mathcal{A},\mathcal{B})$, define a positive normal functional on $\End(_{\mathcal{A}}\mathcal{H}^{\Phi}_{\mathcal{B}})$ as
\begin{equation}\label{eqn:: channel-state correspondence}
    \omega_{\Psi,\varphi}(\Phi) (x)  =\langle x\xi_{\Psi,\varphi}, J_{\Psi}h_{\Phi,\Psi}\xi_{\Psi,\varphi}\rangle,\quad x\in \End(_{\mathcal{A}}\mathcal{H}^{\Phi}_{\mathcal{B}}).
\end{equation}
Notice that if $\Phi$ is unital and $\varphi$ is a state, then so is $\omega_{\Psi,\varphi}(\Psi)$. 
By definition we have $\omega_{\Psi,\varphi} = \omega_{\Psi,\varphi}(\Psi)$. 
\begin{definition}\label{def:: Relative entropy for CP maps}
Suppose $\Psi,\Phi:\mathcal{A}\to\mathcal{B}$ are completely positive maps and $\Phi \preccurlyeq\Psi$.
Let $\varphi$ be a faithful normal state on $\mathcal{B}$.
Then the relative entropy $S_\varphi(\Phi, \Psi)$ of $\Phi,\Psi$ with respect to $\varphi$ is defined to be:
    \begin{align}
        S_\varphi(\Phi,\Psi)
        =S( \omega_{\Psi,\varphi}(\Phi) ,\omega_{\Psi,\varphi}(\Psi)).
    \end{align}
\end{definition}
Alternatively $S_\varphi(\Phi,\Psi)$ can be defined using the corresponding positive functionals on $J_{\Psi}\End(_{\mathcal{A}}\mathcal{H}^{\Psi}_{\mathcal{B}})J_{\Psi} = \End(_{\mathcal{A}}\mathcal{H}^{\Psi}_{\mathcal{B}})^{\textbf{op}}$. 
Using the modular conjugation $j: \End(_{\mathcal{A}}\mathcal{H}^{\Psi}_{\mathcal{B}})^{\textbf{op}}\ni x\mapsto J_{\Psi}x^*J_{\Psi}\in \End(_{\mathcal{A}}\mathcal{H}^{\Psi}_{\mathcal{B}})$, we define the state on $\End(_{\mathcal{A}}\mathcal{H}^{\Psi}_{\mathcal{B}})^{\textbf{op}}$ as
\begin{align*}
    \omega'_{\Psi,\phi}(\Phi) := \omega_{\Psi,\varphi}(\Phi)\circ j.
\end{align*}
Then $ \omega'_{\Phi,\Psi}(\Phi)$ is implemented by the vector $h^{1/2}_{\Phi,\Psi}\xi_{\Psi,\varphi}$. 
Therefore by the fact that $j$ is an anti-isomorphism, we arrived at the following proposition.

\begin{proposition}\label{prop:: alternative definition of relative entropy}
    Under the assumption of Definition \ref{def:: Relative entropy for CP maps}, we have
    \begin{align*}
        S_{\varphi}(\Phi,\Psi)=S(\omega'_{\Psi,\varphi}(\Phi),\omega'_{\Psi,\varphi}(\Psi)).
    \end{align*}
\end{proposition}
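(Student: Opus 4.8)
The plan is to deduce the identity from the invariance of Araki's relative entropy under the $*$-anti-isomorphism $j$. Write $\mathcal{R}=\End(_{\mathcal{A}}\mathcal{H}^{\Psi}_{\mathcal{B}})$ in its standard form, with modular conjugation $J_{\Psi}$ and natural positive cone $\mathcal{P}$, so that $\mathcal{R}^{\textbf{op}}=J_{\Psi}\mathcal{R}J_{\Psi}=\mathcal{R}'$ carries the \emph{same} $J_{\Psi}$ and the \emph{same} cone $\mathcal{P}$. Since $\Phi\preccurlyeq\Psi$, the derivative $h_{\Phi,\Psi}$ is a bounded positive element of $\mathcal{R}$, whence $\omega_{\Psi,\varphi}(\Phi)\leq\|h_{\Phi,\Psi}\|_{\infty}\,\omega_{\Psi,\varphi}(\Psi)$ and the support condition required for finiteness of both relative entropies is met. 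Writing $\rho=\omega_{\Psi,\varphi}(\Phi)$ and $\sigma=\omega_{\Psi,\varphi}(\Psi)$, we have by construction $\omega'_{\Psi,\varphi}(\Phi)=\rho\circ j$ and $\omega'_{\Psi,\varphi}(\Psi)=\sigma\circ j$, so the right-hand side of the proposition is $S(\rho\circ j,\sigma\circ j)$ computed in $\mathcal{R}^{\textbf{op}}$, and the whole statement reduces to showing that $j$ preserves Araki's relative entropy.

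The key observation is that $j$ does not move the natural-cone representatives. If $\rho$ is a normal positive functional on $\mathcal{R}$ with cone vector $\xi_{\rho}\in\mathcal{P}$, then for any $y\in\mathcal{R}'=\mathcal{R}^{\textbf{op}}$, using $J_{\Psi}\xi_{\rho}=\xi_{\rho}$ and the antiunitarity of $J_{\Psi}$,
\begin{align*}
(\rho\circ j)(y)=\rho(J_{\Psi}y^{*}J_{\Psi})=\langle J_{\Psi}y^{*}J_{\Psi}\xi_{\rho},J_{\Psi}\xi_{\rho}\rangle=\langle y\xi_{\rho},\xi_{\rho}\rangle .
\end{align*}
Thus $\rho\circ j$ is again implemented by $\xi_{\rho}$, and since $\xi_{\rho}\in\mathcal{P}$ it is the cone representative of $\rho\circ j$ for $\mathcal{R}^{\textbf{op}}$. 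Applying this to $\rho$ and $\sigma$ with cone vectors $\xi_{\rho},\xi_{\sigma}$, I conclude that the pair $(\rho,\sigma)$ on $\mathcal{R}$ and the pair $(\rho\circ j,\sigma\circ j)$ on $\mathcal{R}^{\textbf{op}}$ are represented by the \emph{same} vectors $\xi_{\rho},\xi_{\sigma}\in\mathcal{P}$.

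It then remains to compare the two relative modular operators built from the common pair $(\xi_{\rho},\xi_{\sigma})$, one for $\mathcal{R}$ and one for its commutant. Here I would invoke the Tomita--Takesaki identity $S^{\mathcal{R}'}_{\rho,\sigma}=(S^{\mathcal{R}}_{\rho,\sigma})^{*}$ for the closed operators of Araki's definition, together with the fact that for vectors in the natural cone the antiunitary part of the polar decomposition of $S_{\rho,\sigma}$ is the canonical $J_{\Psi}$; this yields $\Delta^{\mathcal{R}'}_{\rho,\sigma}=J_{\Psi}\Delta^{\mathcal{R}}_{\rho,\sigma}J_{\Psi}$ and hence $\log\Delta^{\mathcal{R}'}_{\rho,\sigma}=J_{\Psi}(\log\Delta^{\mathcal{R}}_{\rho,\sigma})J_{\Psi}$. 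Since $\log\Delta^{\mathcal{R}}_{\rho,\sigma}$ is selfadjoint and $J_{\Psi}\xi_{\rho}=\xi_{\rho}$, antiunitarity of $J_{\Psi}$ gives
\begin{align*}
\langle\log\Delta^{\mathcal{R}'}_{\rho,\sigma}\xi_{\rho},\xi_{\rho}\rangle=\langle J_{\Psi}(\log\Delta^{\mathcal{R}}_{\rho,\sigma})J_{\Psi}\xi_{\rho},\xi_{\rho}\rangle=\langle\log\Delta^{\mathcal{R}}_{\rho,\sigma}\xi_{\rho},\xi_{\rho}\rangle,
\end{align*}
that is $S(\omega'_{\Psi,\varphi}(\Phi),\omega'_{\Psi,\varphi}(\Psi))=S(\omega_{\Psi,\varphi}(\Phi),\omega_{\Psi,\varphi}(\Psi))=S_{\varphi}(\Phi,\Psi)$. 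The main obstacle is precisely this modular-theoretic input: one must justify carefully that passing to the commutant replaces $S_{\rho,\sigma}$ by its adjoint and that, because $\xi_{\rho},\xi_{\sigma}$ lie in the natural cone, the relative modular conjugation is exactly $J_{\Psi}$ so that no spurious unitary appears, while tracking the support projections $p_{\sigma},p'_{\sigma}$ of Araki's definition through this identification. Once this structural identity is secured, the remaining steps are the short antiunitarity computations above.
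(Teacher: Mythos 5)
Your proof is correct and follows essentially the same route as the paper: both reduce the claim to the invariance of Araki's relative entropy under the anti-isomorphism $j$, using that $\omega'_{\Psi,\varphi}(\Phi)=\omega_{\Psi,\varphi}(\Phi)\circ j$ lives on the commutant $J_{\Psi}\End(_{\mathcal{A}}\mathcal{H}^{\Psi}_{\mathcal{B}})J_{\Psi}$. The only difference is that the paper simply cites this invariance, whereas you supply its proof via the standard modular-theoretic identities ($J_{\Psi}$ fixing the natural cone and $\Delta^{\mathcal{R}'}_{\rho,\sigma}=J_{\Psi}\Delta^{\mathcal{R}}_{\rho,\sigma}J_{\Psi}$), which is a correct and more self-contained rendering of the same argument.
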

\begin{lemma}\label{lemma::IndependeceOfReference}
    Suppose $\Psi,\Phi,\mathcal{E}: \mathcal{A}\to\mathcal{B}$ are completely positive and $\Phi \preccurlyeq\Psi\preccurlyeq\mathcal{E}$. 
    Let $\varphi$ be a faithful normal state on $\mathcal{B}$, then 
    \begin{align}
        S_{\varphi}(\Phi,\Psi)=S(\omega_{\mathcal{E},\varphi}(\Phi),\omega_{\mathcal{E},\varphi}(\Psi)).
    \end{align}
\end{lemma}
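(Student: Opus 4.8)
The plan is to realize the $\mathcal{E}$-computation as a reduction of the $\Psi$-computation through the intertwiner supplied by $\Psi\preccurlyeq\mathcal{E}$, and then to invoke two standard features of Araki's relative entropy: its reduction to the supports of the functionals, and its invariance under spatial $*$-isomorphisms. First I would apply Lemma \ref{lemma:RN} to the domination $\Psi\preccurlyeq\mathcal{E}$ to produce the bounded $\mathcal{A}$-$\mathcal{B}$-bimodule map $u\colon\mathcal{H}^{\mathcal{E}}\to\mathcal{H}^{\Psi}$ determined by $u[\zeta]_{\mathcal{E}}=[\zeta]_{\Psi}$, which satisfies $u^{*}u=h_{\Psi,\mathcal{E}}$ and $uv_{\mathcal{E}}=v_{\Psi}$; in particular $u\Omega_{\mathcal{E},\varphi}=\Omega_{\Psi,\varphi}$. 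Exactly as in the derivation of Lemma \ref{lemma:RN}, the chain $\Phi\preccurlyeq\Psi\preccurlyeq\mathcal{E}$ together with $\pi_{\Psi}(a)u=u\pi_{\mathcal{E}}(a)$ and $uv_{\mathcal{E}}=v_{\Psi}$ gives, by the uniqueness clause of that lemma, the transfer formula
\[
 h_{\Phi,\mathcal{E}}=u^{*}h_{\Phi,\Psi}u ,
\]
which for $\Phi=\Psi$ recovers $h_{\Psi,\mathcal{E}}=u^{*}u$.

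Next I would take the polar decomposition $u=w\,h_{\Psi,\mathcal{E}}^{1/2}$, where $w\colon\mathcal{H}^{\mathcal{E}}\to\mathcal{H}^{\Psi}$ is a bimodule partial isometry. Since the vectors $[\zeta]_{\Psi}$ are dense, $u$ has dense range, so $w$ is a co-isometry with $ww^{*}=1_{\mathcal{H}^{\Psi}}$ while $w^{*}w=p:=\mathbf{supp}\,h_{\Psi,\mathcal{E}}\in\End(_{\mathcal{A}}\mathcal{H}^{\mathcal{E}}_{\mathcal{B}})$. A direct check that $w$ intertwines the left and right actions then shows that $\Theta:=w(\cdot)w^{*}$ restricts to a normal $*$-isomorphism of $p\,\End(_{\mathcal{A}}\mathcal{H}^{\mathcal{E}}_{\mathcal{B}})\,p$ onto $\End(_{\mathcal{A}}\mathcal{H}^{\Psi}_{\mathcal{B}})$, spatially implemented by the unitary $\tilde{w}:=w|_{p\mathcal{H}^{\mathcal{E}}}$. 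Being a spatial isomorphism, $\tilde{w}$ intertwines the commutants as well, hence carries the modular conjugation and the natural positive cone of the reduced system to those of $\End(_{\mathcal{A}}\mathcal{H}^{\Psi}_{\mathcal{B}})$.

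Then I would check that both functionals have support dominated by $p$: from $h_{\Phi,\mathcal{E}}=u^{*}h_{\Phi,\Psi}u\le\|h_{\Phi,\Psi}\|_{\infty}\,h_{\Psi,\mathcal{E}}$ one gets $\mathbf{supp}\,\omega_{\mathcal{E},\varphi}(\Phi)\le\mathbf{supp}\,\omega_{\mathcal{E},\varphi}(\Psi)$, and by \eqref{eqn:: channel-state correspondence} the support of the vector functional $x\mapsto\langle x\,\xi_{\mathcal{E},\varphi},J_{\mathcal{E}}h_{\Psi,\mathcal{E}}\xi_{\mathcal{E},\varphi}\rangle$ equals $J_{\mathcal{E}}\,\mathbf{supp}(J_{\mathcal{E}}h_{\Psi,\mathcal{E}}J_{\mathcal{E}})\,J_{\mathcal{E}}=\mathbf{supp}\,h_{\Psi,\mathcal{E}}=p$. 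Writing $Q=\End(_{\mathcal{A}}\mathcal{H}^{\mathcal{E}}_{\mathcal{B}})$, the reduction property of Araki's relative entropy then gives
\[
 S(\omega_{\mathcal{E},\varphi}(\Phi),\omega_{\mathcal{E},\varphi}(\Psi))=S\big(\omega_{\mathcal{E},\varphi}(\Phi)|_{pQp},\,\omega_{\mathcal{E},\varphi}(\Psi)|_{pQp}\big),
\]
and $\Theta$-invariance transports this to a relative entropy on $\End(_{\mathcal{A}}\mathcal{H}^{\Psi}_{\mathcal{B}})$.

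The remaining and main point is to verify that $\Theta$ pushes the two channel-state functionals to the right targets, that is $\omega_{\mathcal{E},\varphi}(\Phi)(w^{*}yw)=\omega_{\Psi,\varphi}(\Phi)(y)$ and likewise for $\Psi$, for all $y\in\End(_{\mathcal{A}}\mathcal{H}^{\Psi}_{\mathcal{B}})$. This is where the modular bookkeeping enters: one combines the transfer formula $h_{\Phi,\mathcal{E}}=u^{*}h_{\Phi,\Psi}u$ with the intertwining $wJ_{\mathcal{E}}=J_{\Psi}w$ on the support (a consequence of $\tilde{w}$ being a spatial isomorphism, which forces it to match natural cones and hence the canonical vectors $\xi_{\mathcal{E},\varphi}$, $\xi_{\Psi,\varphi}$) with the definition \eqref{eqn:: channel-state correspondence}. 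I expect this functional-matching step, rather than the abstract invariance statements, to be the delicate part, since one must reconcile the polar part $w$ with the modular conjugations and the natural-cone representatives appearing in \eqref{eqn:: channel-state correspondence}. Once it is in place, combining reduction-to-support with $\Theta$-invariance yields $S(\omega_{\mathcal{E},\varphi}(\Phi),\omega_{\mathcal{E},\varphi}(\Psi))=S(\omega_{\Psi,\varphi}(\Phi),\omega_{\Psi,\varphi}(\Psi))=S_{\varphi}(\Phi,\Psi)$, which is the assertion.
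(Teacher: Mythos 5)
Your high-level strategy---embedding $\mathcal{H}^{\Psi}$ into $\mathcal{H}^{\mathcal{E}}$ via the intertwiner coming from $\Psi\preccurlyeq\mathcal{E}$, cutting down to the support $p$ of $h_{\Psi,\mathcal{E}}$, and transporting the entropy---is the same as the paper's, and your transfer formula $h_{\Phi,\mathcal{E}}=u^{*}h_{\Phi,\Psi}u$ is correct. But the execution on the algebra side has a genuine gap, located exactly at the two places you flag. First, the support claim is false in general: writing $Q=\End(_{\mathcal{A}}\mathcal{H}^{\mathcal{E}}_{\mathcal{B}})$, the functional $x\mapsto\langle x\xi_{\mathcal{E},\varphi},J_{\mathcal{E}}h_{\Psi,\mathcal{E}}\xi_{\mathcal{E},\varphi}\rangle$ is the vector state of $(J_{\mathcal{E}}h_{\Psi,\mathcal{E}}J_{\mathcal{E}})^{1/2}\xi_{\mathcal{E},\varphi}$; in a finite-dimensional model with $\omega_{\mathcal{E},\varphi}=\mathrm{Tr}(\rho\,\cdot)$ its density is $\rho^{1/2}h_{\Psi,\mathcal{E}}\rho^{1/2}$, whose support is the projection onto $\rho^{1/2}(\mathrm{ran}\,p)$, which equals $p$ only when $p$ commutes with $\rho$. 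Since $\omega_{\mathcal{E},\varphi}$ is not a trace in the cases of interest (its density $\Delta$ need not commute with $\widehat{\Psi}$ unless the inclusion is extremal), the reduction of $S(\omega_{\mathcal{E},\varphi}(\Phi),\omega_{\mathcal{E},\varphi}(\Psi))$ to $pQp$ is not justified. Second, the functional-matching identity $\omega_{\mathcal{E},\varphi}(\Phi)(w^{*}yw)=\omega_{\Psi,\varphi}(\Phi)(y)$ that you defer is in fact false: in the same model the left side is governed by $p\rho^{1/2}h_{\Phi,\mathcal{E}}\rho^{1/2}p$ and the right side by $h_{\Psi,\mathcal{E}}^{1/2}\rho\,h_{\Psi,\mathcal{E}}^{1/2}$-type sandwiching; already for $Q=M_{2}(\mathbb{C})$, $h_{\Phi,\mathcal{E}}=h_{\Psi,\mathcal{E}}=p=e_{11}$ and $\rho$ non-diagonal one has $(\rho^{1/2}e_{11}\rho^{1/2})_{11}=\bigl((\rho^{1/2})_{11}\bigr)^{2}\neq\rho_{11}$, so the two states on $pQp$ differ. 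The equality of entropies therefore cannot be obtained by matching the unprimed functionals under $\Theta$.

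The paper sidesteps both problems by first invoking Proposition \ref{prop:: alternative definition of relative entropy} to replace each $\omega_{\mathcal{E},\varphi}(\cdot)$ by the commutant-side functional $\omega'_{\mathcal{E},\varphi}(\cdot)$, which is an honest vector state on $\End(_{\mathcal{A}}\mathcal{H}^{\mathcal{E}}_{\mathcal{B}})'$ implemented by $h^{1/2}_{\Phi,\mathcal{E}}\Omega_{\mathcal{E}}$ (using $h_{\Phi,\Psi}=h^{-1/2}_{\Psi,\mathcal{E}}h_{\Phi,\mathcal{E}}h^{-1/2}_{\Psi,\mathcal{E}}$ on $p\mathcal{H}^{\mathcal{E}}$ and $|h_{\Phi,\Psi}^{1/2}h_{\Psi,\mathcal{E}}^{1/2}|=h_{\Phi,\mathcal{E}}^{1/2}$). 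These implementing vectors visibly lie in $p\mathcal{H}^{\mathcal{E}}$, so the identification of $(\mathcal{H}^{\Psi},\Omega_{\Psi})$ with $(p\mathcal{H}^{\mathcal{E}},h^{1/2}_{\Psi,\mathcal{E}}\Omega_{\mathcal{E}})$ identifies the primed functionals on the nose, with no support computation or modular bookkeeping; one then converts back using the anti-isomorphism invariance in Proposition \ref{prop:: alternative definition of relative entropy}. To repair your argument, route it through $\omega'$ rather than $\omega$.
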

\begin{proof}
    Let $(_{\mathcal{A}}\mathcal{H}^{\mathcal{E}}_{\mathcal{B}},\Omega_{\mathcal{E}})$ be the dilation of $\mathcal{E}$ with respect to $\varphi$. 
    By definition of the derivative between completely positive maps the assignment $a\Omega_{\Psi}b\mapsto ah^{1/2}_{\Psi,\mathcal{E}}\Omega_{\mathcal{E}}b$ for all $a\in\mathcal{A}$ and $b\in\mathcal{B}$ extends to an isometric bimodule intertwiner. 
   
    Let $p_0$ be the support of $h^{1/2}_{\Psi,\mathcal{E}}$. 
    In the rest of the proof, we identify $(\mathcal{H}^{\Psi},\Omega_{\Psi})$ with $(p_0\mathcal{H}^{\mathcal{E}},h^{1/2}_{\Psi,\mathcal{E}}\Omega_{\mathcal{E}})$ as $\mathcal{A}$-$\mathcal{B}$-bimodules. 
    Consequently $\End(_{\mathcal{A}}\mathcal{H}^{\Psi}_{\mathcal{B}})$ is identified with $p_0\End(_{\mathcal{A}}\mathcal{H}^{\mathcal{E}}_{\mathcal{B}})p_0$, 
    and $J_{\omega_\Psi}\End(_{\mathcal{A}}\mathcal{H}^{\Psi}_{\mathcal{B}})J_{\omega_\Psi}$ is identified with $J_{\omega_\mathcal{E}}\End(_{\mathcal{A}}\mathcal{H}^{\mathcal{E}}_{\mathcal{B}})J_{\omega_\mathcal{E}}p_0$. 

    Now we find that $\omega'_{\Psi}$ is implemented by the vector $h^{1/2}_{\Psi,\mathcal{E}}\Omega_{\mathcal{E}}$. 
    By uniqueness of the derivative $h_{\Phi,\Psi}=h^{-1/2}_{\Psi,\mathcal{E}}h_{\Phi,\mathcal{E}}h^{-1/2}_{\Psi,\mathcal{E}}$ as an operator on $p_0\mathcal{H}^{\mathcal{E}}_0$, so $\omega'_{\Psi}(\Phi)$ is implemented by $h^{1/2}_{\Phi,\mathcal{E}}\Omega_{\mathcal{E}}$. 
    Therefore by Corollary \ref{prop:: alternative definition of relative entropy}: 
    \begin{align*}
        S_{\varphi}(\Phi,\Psi) &= S(\omega'_{\Psi,\varphi}(\Phi),\omega'_{\Psi,\varphi})\\
        &= S(\omega'_{\mathcal{E},\varphi}(\Phi),\omega'_{\mathcal{E},\varphi}(\Psi))\\
        &= S(\omega_{\mathcal{E},\varphi}(\Phi),\omega_{\mathcal{E},\varphi}(\Psi)), 
    \end{align*}
    completing the proof.
\end{proof}
We now interpret the upper bound in Equation \eqref{eqn::upper bound for H} as a special case of the quantity $S_{\varphi}(\Phi,\Psi)$. 
\begin{theorem}\label{thm::PimsnerPopaEntropy}
    Let $\mathcal{N}\subset \mathcal{M}$ is a finite inclusion of finite von Neumann algebras. Suppose $\Phi,\Psi \in \mathbf{CP}_{\cN}(\cM)$ and $\Phi\preccurlyeq \Psi$. 
    Then 
    \begin{align*}
        \delta D_{\tau_{\cM_2}}(\Delta^{1/2}\widehat{\Phi}\Delta^{1/2}\| \Delta^{1/2}\widehat{\Psi}\Delta^{1/2}) = S_{\tau_{\mathcal{M}}}(\Phi,\Psi).
    \end{align*}
\end{theorem}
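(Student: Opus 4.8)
The plan is to reduce both sides to the finite von Neumann algebra $\mathcal{R}:=\cM'\cap\cM_2 = \End(_\cM\mathcal{H}^{E_\cN}_\cM)$ carrying the trace $\tau_{\cM_2}$, and to identify the Araki relative entropy of the two channel--state functionals with the Umegaki relative entropy of their densities against $\tau_{\cM_2}$.

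First I would choose $E_\cN$ as a common reference. Since $\Phi\preccurlyeq\Psi$ by hypothesis and $\Psi\preccurlyeq E_\cN$ by Remark \ref{rmk:: E_N dominates bimodule maps}, Lemma \ref{lemma::IndependeceOfReference} applies and gives $S_{\tau_\cM}(\Phi,\Psi)=S(\omega_{E_\cN,\tau_\cM}(\Phi),\omega_{E_\cN,\tau_\cM}(\Psi))$. Under the identification $\iota$ of Proposition \ref{prop::bimodule of a condiitonal expectation}, the reference functional $\omega_{E_\cN,\tau_\cM}$ is precisely the faithful state $\omega_\cN$ on $\mathcal{R}$ implemented by $\Omega\otimes_\cN\Omega$, and by Theorem \ref{thm:: Fourier multiplier is derivative} the relevant derivative is $h_{\Phi,E_\cN}=\delta\widehat\Phi$. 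Thus the problem becomes computing the Araki relative entropy on $\mathcal{R}$ of the functionals $\omega_\cN(\Phi)$ and $\omega_\cN(\Psi)$ produced by the channel--state correspondence \eqref{eqn:: channel-state correspondence}.

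The technical core is to identify the $\tau_{\cM_2}$-densities of these functionals. By Lemma \ref{lemma::defining Delta} the reference state satisfies $\omega_\cN(A)=\tau_{\cM_2}(\Delta A)$, so in the trace standard form $L^2(\mathcal{R},\tau_{\cM_2})$ its positive cone representative is $\Delta^{1/2}$ and its modular conjugation is the tracial conjugation $x\mapsto x^*$. Because the functional $\omega_\cN(\Phi)(x)=\langle x\xi, J h_\Phi\xi\rangle$ is assembled solely from the canonical standard form data $(\xi,J)$, Haagerup's uniqueness of the standard form lets me evaluate it in this tracial realization; using $J\Delta^{1/2}=\Delta^{1/2}$ and that $Jh_\Phi J$ acts as right multiplication by $h_\Phi$, a short computation yields $\omega_\cN(\Phi)(x)=\tau_{\cM_2}(\Delta^{1/2}h_\Phi\Delta^{1/2}x)$. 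Hence $\omega_\cN(\Phi)$ and $\omega_\cN(\Psi)$ have densities $\delta\Delta^{1/2}\widehat\Phi\Delta^{1/2}$ and $\delta\Delta^{1/2}\widehat\Psi\Delta^{1/2}$ respectively, and $\mathbf{supp}\,\widehat\Phi\leq\mathbf{supp}\,\widehat\Psi$ (Corollary \ref{corollary:: comparison of Fourier multiplier}) supplies the support domination needed for the relative entropy to be finite.

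To finish, I would invoke the reduction of Araki's relative entropy to the tracial one: for a finite algebra with trace $\tau_{\cM_2}$ and states with densities $D_\rho,D_\sigma$, the relative modular operator in the trace standard form is $L_{D_\rho}R_{D_\sigma}^{-1}$, so $\log\Delta_{\rho,\sigma}=L_{\log D_\rho}-R_{\log D_\sigma}$ and $S(\rho,\sigma)=\tau_{\cM_2}(D_\rho\log D_\rho-D_\rho\log D_\sigma)=D_{\tau_{\cM_2}}(D_\rho\|D_\sigma)$. Applying this to the densities above and pulling the scalar $\delta$ out by homogeneity $D_{\tau_{\cM_2}}(c\rho\|c\sigma)=cD_{\tau_{\cM_2}}(\rho\|\sigma)$ gives $S_{\tau_\cM}(\Phi,\Psi)=\delta D_{\tau_{\cM_2}}(\Delta^{1/2}\widehat\Phi\Delta^{1/2}\|\Delta^{1/2}\widehat\Psi\Delta^{1/2})$, as claimed. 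The hard part will be the density identification in the third paragraph: reconciling the modular data of the non-tracial reference state $\omega_\cN$ with the trace $\tau_{\cM_2}$, which is where the representation-independence of the standard form and the explicit description of $\Delta$ do the real work.
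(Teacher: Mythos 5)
Your proposal is correct and follows essentially the same route as the paper: reduce to the reference $E_{\cN}$ via Lemma \ref{lemma::IndependeceOfReference}, identify $h_{\Phi}=\delta\widehat{\Phi}$ by Theorem \ref{thm:: Fourier multiplier is derivative}, compute the $\tau_{\cM_2}$-densities of the channel--state functionals as $\Delta^{1/2}h_{\Phi}\Delta^{1/2}$, and conclude by the tracial form of Araki's relative entropy together with homogeneity in the scalar $\delta$. The only difference is that you spell out the density identification (via uniqueness of the standard form and the tracial modular conjugation) that the paper leaves implicit.
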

\begin{proof}
    By Proposition \ref{lemma::IndependeceOfReference} we have
    \begin{align*}
        S_{\tau_{\mathcal{M}}}(\Phi,\Psi)=S(\omega_{E_{\cN},\tau_{\cM}}(\Phi),\omega_{E_{\cN},\tau_{\cM}}(\Psi)).
    \end{align*}
    For any $\Phi_0\in \mathbf{CP}_{\cN}(\cM)$, by Theorem \ref{thm:: Fourier multiplier is derivative} $\delta\widehat{\Phi}_0 = h_{\Phi_0,E_{\cN}}$. 
    Then Proposition \ref{prop:: characterize the state omega} implies
    \begin{align*}
         \delta \omega_{E_{\cN},\tau_{\cM}}(\widehat{\Phi}_0) = \omega_{E_{\cN},\tau_{\cM}}(h_{\Phi_0,E_{\cN}})=\tau_{\cM}(\Phi_0(1))=\delta\tau_{\cM_2}(\Delta\widehat{\Phi}_0),
    \end{align*}
    so we have $\omega_{E_{\cN},\tau_{\cM}} = \omega_{\cN}$ and $\Delta$ is the density operator of $\omega_{E_{\cN},\tau_{\cM}}$ with respect to $\tau_{\cM_2}$. 
    The density operators for $\omega_{E_{\cN},\tau_{\cM}}(\Phi)$ and $\omega_{E_{\cN},\tau_{\cM}}(\Psi)$ with respect to the trace can then be computed as $\Delta^{1/2}h_{\Phi}\Delta^{1/2}$ and $\Delta^{1/2}h_{\Psi}\Delta^{1/2}$ respectively. 
    Thus
    \begin{align*}
        S(\omega_{E_{\cN},\tau_{\cM}}(\Phi),\omega_{E_{\cN},\tau_{\cM}}(\Psi))&=D_{\tau_{\cM_2}}(\Delta^{1/2}h_{\Phi}\Delta^{1/2}\|\Delta^{1/2}h_{\Psi}\Delta^{1/2})\\
        &= \delta D_{\tau_{\cM_2}}(\Delta^{1/2}\widehat{\Phi}\Delta^{1/2}\| \Delta^{1/2}\widehat{\Psi}\Delta^{1/2}).
    \end{align*}
\end{proof}
In the end of this section, we discuss the monotonicity of $S_{\varphi}(\Phi,\Psi)$ under compositions with completely positive maps. 
\begin{theorem}[Right monotonicity]\label{thm::RightMonotonicity}
    Let $\Psi_1,\Phi_1\in \mathbf{CP}(\mathcal{B},\mathcal{C})$ with $\Phi_1 \preccurlyeq\Psi_1$. 
    Then for any faithful normal positive linear functional $\phi$ on $\mathcal{C}$ and $\Psi_2\in \Ch(\mathcal{A},\mathcal{B})$:
    \begin{align*}
        S_\phi(\Phi_1\Psi_2,\Psi_1\Psi_2) \leq S_\phi(\Phi_1,\Psi_1).
    \end{align*}
\end{theorem}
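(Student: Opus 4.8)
The plan is to reduce the inequality to a single application of the monotonicity (data-processing) of Araki's relative entropy, where the intertwining map is manufactured from the isometry $\mathcal{Y}$ of Lemma \ref{lemma:minimal} together with the convolution formula of Proposition \ref{prop::convolution}. First I would specialize Proposition \ref{prop::convolution} to $\Phi_2=\Psi_2$. Since $\Psi_2\in\Ch(\mathcal{A},\mathcal{B})$ we have $h_{\Psi_2,\Psi_2}=\id_{\mathcal{H}^{\Psi_2}}$, so that $\Phi_1\Psi_2\preccurlyeq\Psi_1\Psi_2$ and $h_{\Phi_1\Psi_2,\Psi_1\Psi_2}=\mathcal{Y}^*(\id_{\mathcal{H}^{\Psi_2}}\otimes_{\varphi}h_{\Phi_1,\Psi_1})\mathcal{Y}$, where $\varphi$ is a fixed faithful normal state on $\mathcal{B}$ and $\mathcal{Y}\colon\mathcal{H}^{\Psi_1\Psi_2}\to\mathcal{H}^{\Psi_2}\otimes_{\varphi}\mathcal{H}^{\Psi_1}$ is the isometric intertwiner with $\mathcal{Y}\Omega_{\Psi_1\Psi_2}=\Omega_{\Psi_2}\otimes_{\varphi}\Omega_{\Psi_1}$. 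This exhibits $\beta(y):=\mathcal{Y}^*(\id\otimes_{\varphi}y)\mathcal{Y}$, a map from $\End({}_{\mathcal{B}}\mathcal{H}^{\Psi_1}_{\mathcal{C}})$ into $\End({}_{\mathcal{A}}\mathcal{H}^{\Psi_1\Psi_2}_{\mathcal{C}})$, as a normal unital (because $\mathcal{Y}^*\mathcal{Y}=\id$) completely positive map carrying the derivative $h_{\Phi_1,\Psi_1}$ to $h_{\Phi_1\Psi_2,\Psi_1\Psi_2}$.

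The decisive point, and the place where the unitality of $\Psi_2$ is used, is that $\beta$ transports the reference vector state correctly. Since $\Psi_2(1)=1$, the vector $\Omega_{\Psi_2}$ is $\varphi$-bounded with $L^*_{\varphi}(\Omega_{\Psi_2})L_{\varphi}(\Omega_{\Psi_2})=\Psi_2(1)=1$, exactly as in the computation in the proof of Lemma \ref{lemma:minimal}. Hence for $y\in\End(\mathcal{H}^{\Psi_1})$ one gets $\langle\beta(y)\Omega_{\Psi_1\Psi_2},\Omega_{\Psi_1\Psi_2}\rangle=\langle(\id\otimes_{\varphi}y)(\Omega_{\Psi_2}\otimes_{\varphi}\Omega_{\Psi_1}),\Omega_{\Psi_2}\otimes_{\varphi}\Omega_{\Psi_1}\rangle=\langle y\Omega_{\Psi_1},\Omega_{\Psi_1}\rangle$, i.e. $\omega_{\Psi_1\Psi_2}\circ\beta=\omega_{\Psi_1}$. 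Together with $\beta(h_{\Phi_1,\Psi_1})=h_{\Phi_1\Psi_2,\Psi_1\Psi_2}$ and the defining relation \eqref{eqn:: channel-state correspondence}, this says that $\beta$ intertwines the two base states and simultaneously matches the two derivatives, so the entire family $\omega_{\Psi_1}(\,\cdot\,)$ on $\End({}_{\mathcal{B}}\mathcal{H}^{\Psi_1}_{\mathcal{C}})$ is mapped onto the family $\omega_{\Psi_1\Psi_2}(\,\cdot\,\Psi_2)$ on $\End({}_{\mathcal{A}}\mathcal{H}^{\Psi_1\Psi_2}_{\mathcal{C}})$.

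I would then conclude by monotonicity of Araki's relative entropy. Because $\beta$ is a normal unital completely positive (hence Schwarz) map which preserves the reference states, it admits a state-preserving adjoint $\beta^{\star}\colon\End({}_{\mathcal{A}}\mathcal{H}^{\Psi_1\Psi_2}_{\mathcal{C}})\to\End({}_{\mathcal{B}}\mathcal{H}^{\Psi_1}_{\mathcal{C}})$, again normal, unital and completely positive, characterized by $\omega_{\Psi_1}(\Phi_1)\circ\beta^{\star}=\omega_{\Psi_1\Psi_2}(\Phi_1\Psi_2)$ and $\omega_{\Psi_1}(\Psi_1)\circ\beta^{\star}=\omega_{\Psi_1\Psi_2}(\Psi_1\Psi_2)$; in the finite tracial case $\beta^{\star}$ is literally the trace-adjoint of $\beta$, and the argument collapses to the ordinary data-processing inequality exactly as in the proof of Theorem \ref{thm::popaentropycompletepositive}. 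Applying monotonicity of $S$ under $\beta^{\star}$ yields $S_\phi(\Phi_1\Psi_2,\Psi_1\Psi_2)=S(\omega_{\Psi_1}(\Phi_1)\circ\beta^{\star},\omega_{\Psi_1}(\Psi_1)\circ\beta^{\star})\leq S(\omega_{\Psi_1}(\Phi_1),\omega_{\Psi_1}(\Psi_1))=S_\phi(\Phi_1,\Psi_1)$. The main obstacle is precisely producing this correct-direction map and verifying the two state identities: the naive compression $\beta$ runs from the $\Psi_1$-algebra into the $\Psi_1\Psi_2$-algebra, which gives monotonicity with the opposite (useless) orientation, and the geometry is genuinely twisted—the left actions are $\mathcal{A}$ versus $\mathcal{B}$ and the projection $\mathcal{Y}\mathcal{Y}^*$ does not lie in $\id\otimes_{\varphi}\End(\mathcal{H}^{\Psi_1})$—so one cannot simply sandwich via a common restriction. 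The content of the proof is therefore to establish that $\beta$ is \emph{sufficient} for the pair $(\omega_{\Psi_1}(\Phi_1),\omega_{\Psi_1}(\Psi_1))$ and to construct $\beta^{\star}$ explicitly from $\mathcal{Y}$ and the $\varphi$-bounded vector $\Omega_{\Psi_2}$, the identity $\Psi_2(1)=1$ being exactly what makes $\Omega_{\Psi_2}$ induce a state-preserving conditional-expectation-type reduction.
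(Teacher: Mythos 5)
Your proposal is correct and follows essentially the same route as the paper: your map $\beta$ is exactly the paper's $\Gamma(y)=\mathcal{Y}^*(\mathrm{id}_{\mathcal{H}^{\Psi_2}}\otimes_{\varphi}y)\mathcal{Y}$, the identities $\omega_{\Psi_1\Psi_2}\circ\beta=\omega_{\Psi_1}$ and $\beta(h_{\Phi_1,\Psi_1})=h_{\Phi_1\Psi_2,\Psi_1\Psi_2}$ are the same two verifications, and passing to the Petz adjoint $\beta^{\star}$ to reverse the direction before invoking monotonicity of Araki's relative entropy is precisely the paper's argument. (Only quibble: ``sufficiency'' of $\beta$ is not needed --- the state-intertwining identities for the Petz adjoint suffice --- but this does not affect the proof.)
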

\begin{proof}
    We fix a faithful normal state $\varphi$ on $\mathcal{B}$ and adopt the setting of Proposition \ref{lemma:minimal}.
    Consider the map $\Gamma: \End(_\mathcal{B}\mathcal{H}^{\Psi_1}_\mathcal{C})\rightarrow \End(_\mathcal{A}\mathcal{H}^{\Psi_1\Psi_2}_\mathcal{C})$
    \begin{align}\label{eqn::Gammamap}
        \Gamma(y)=\mathcal{Y}^*(id_{\mathcal{H}^{\Psi_2}}\otimes_{\varphi} y)\mathcal{Y},
    \end{align}
    which is normal unital and completely positive. 
    Proposition \ref{prop::convolution} then reads (with $\Psi_2=\Phi_2$)
    \begin{align}
        \Gamma(h_{\Phi_1,\Psi_1})=h_{\Phi_1\Psi_2,\Psi_1\Psi_2}.
    \end{align}
    By defining properties of $w$ as in Lemma \ref{lemma:minimal} and $\Psi_2$ being unital one checks for any $y\in \End(_\mathcal{B}\mathcal{H}^{\Psi_1}_\mathcal{C})$, 
    \begin{align*}
        \omega_{\Psi_1\Psi_2}\circ\Gamma (y)=\left\langle \Omega_{\Psi_2}\otimes_{\varphi}y\Omega_{\Psi_1},\Omega_{\Psi_2}\otimes_{\varphi}\Omega_{\Psi_1}\right\rangle=\left\langle y\Omega_{\Psi_1},\Omega_{\Psi_1}\right\rangle=\omega_{\Psi_1}(y).
    \end{align*}
    Therefore $\omega_{\Psi_1\Psi_2}\circ\Gamma=\omega_{\Psi_1}$. 
    In particular, $\Gamma$ is faithful.
    
    Let $\Gamma^*_{\Psi_1\Psi_2}:=\Gamma^*_{\omega_{\Psi_1\Psi_2}}$ be the adjoint in the sense of Petz \cite{Petz1988}. 
    We have that $\Gamma^*_{\Psi_1\Psi_2}:\End(_\mathcal{A}\mathcal{H}^{\Psi_1\Psi_2}_\mathcal{C})\to \End(_\mathcal{B}\mathcal{H}^{\Psi_1}_\mathcal{C})$ and that
    \begin{align*}
        &\left\langle \Gamma^*_{\Psi_1\Psi_2}(x)\xi_{\Psi_1},J_{\Psi_1}y\xi_{\Psi_1}\right\rangle = \left\langle x\xi_{\Psi_1\Psi_2},J_{\Psi_1\Psi_2}\Gamma(y)\xi_{\Psi_1\Psi_2}\right\rangle,
    \end{align*}
    whenever  $x\in \End(_\mathcal{A}\mathcal{H}^{\Psi_1\Psi_2}_\mathcal{C}), y\in \End(_\mathcal{B}\mathcal{H}^{\Psi_1}_\mathcal{C})$.
    Taking $y=1$, we have $\omega_{\Psi_1}\circ\Gamma^*_{\Psi_1\Psi_2}=\omega_{\Psi_1\Psi_2}$. In addition to this, one has for any $x\in \End(_\mathcal{A}\mathcal{H}^{\Psi_1\Psi_2}_\mathcal{C})$:
    \begin{align*}
        \omega_{\Psi_1}(\Phi_1)\circ\Gamma^*_{\Psi_1\Psi_2}(x) &=\left\langle \Gamma^*_{\Psi_1\Psi_2}(x)\xi_{\Psi_1}, J_{\Psi_1}h_{\Phi_1,\Psi_1}\xi_{\Psi_1}\right\rangle\\
        &=\left\langle x\xi_{\Psi_1\Psi_2}, J_{\Psi_1\Psi_2}\Gamma(h_{\Phi_1,\Psi_1})\xi_{\Psi_1\Psi_2}\right\rangle\\
        &=\left\langle x\xi_{\Psi_1\Psi_2}, J_{\Psi_1\Psi_2}h_{\Phi_1\Psi_2,\Psi_1\Psi_2}\xi_{\Psi_1\Psi_2}\right\rangle\\
        &=\omega_{\Psi_1\Psi_2}(\Phi_1\Psi_2)(x).
    \end{align*}
    Therefore the monotonicity of $S_{\phi}$ follows from that of Araki's relative entropy: 
    \begin{align*}
        S_{\varphi}(\Phi_1\Psi_2,\Psi_1\Psi_2) &=S(\omega_{\Psi_1\Psi_2}(\Phi_1\Psi_2),\omega_{\Psi_1\Psi_2})\\
        &=S(\omega_{\Psi_1}(\Phi_1)\circ\Gamma^*_{\Psi_1\Psi_2},\omega_{\Psi_1}\circ\Gamma^*_{\Psi_1\Psi_2})\\
        &\leq S(\omega_{\Psi_1}(\Phi_1),\omega_{\Psi_1})\\
        &=S_{\varphi}(\Phi_1,\Psi_1).
    \end{align*}
    This completes the proof.
\end{proof}
\begin{theorem}[Left monotonicity]\label{thm::LeftMonotonicity}
    Let $\Psi_1\in \Ch(\mathcal{B},\mathcal{C})$ and $\Phi_2 \preccurlyeq\Psi_2: \mathcal{A}\to\mathcal{B}$ be normal completely positive. 
    Then for any faithful normal positive linear $\phi$ on $\mathcal{C}$:
    \begin{align*}
        S_\phi(\Psi_1\Phi_2,\Psi_1\Psi_2) \leq S_{\phi\circ\Psi_1}(\Phi_2,\Psi_2).
    \end{align*}
\end{theorem}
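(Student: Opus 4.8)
The plan is to transcribe the proof of right monotonicity (Theorem \ref{thm::RightMonotonicity}) with one essential modification: the correct reference state on the right-hand side is forced to be $\varphi:=\phi\circ\Psi_1$ on $\mathcal{B}$, which is a faithful normal positive functional because $\Psi_1$ is unital (faithfulness being implicit in the statement of $S_{\phi\circ\Psi_1}(\Phi_2,\Psi_2)$, and guaranteed when $\Psi_1$ is faithful). First I would apply Lemma \ref{lemma:minimal} with this $\varphi$ to produce the isometry $\mathcal{Y}\in\Hom(_{\mathcal{A}}\mathcal{H}^{\Psi_1\Psi_2}_{\mathcal{C}}, {}_{\mathcal{A}}\mathcal{H}^{\Psi_2}\otimes_{\varphi}\mathcal{H}^{\Psi_1}_{\mathcal{C}})$ with $\mathcal{Y}\Omega_{\Psi_1\Psi_2,\phi}=\Omega_{\Psi_2,\varphi}\otimes_{\varphi}\Omega_{\Psi_1,\phi}$, and then define the normal unital completely positive map
\[
    \Lambda:\End(_{\mathcal{A}}\mathcal{H}^{\Psi_2}_{\mathcal{B}})\to\End(_{\mathcal{A}}\mathcal{H}^{\Psi_1\Psi_2}_{\mathcal{C}}),\qquad \Lambda(x)=\mathcal{Y}^*(x\otimes_{\varphi}id_{\mathcal{H}^{\Psi_1}})\mathcal{Y}.
\]
Since the derivative of $\Psi_1$ with respect to itself is $id_{\mathcal{H}^{\Psi_1}}$, Proposition \ref{prop::convolution} (with $\Phi_1=\Psi_1$) gives $\Lambda(h_{\Phi_2,\Psi_2})=h_{\Psi_1\Phi_2,\Psi_1\Psi_2}$.

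The key step is the state identity $\omega_{\Psi_1\Psi_2,\phi}\circ\Lambda=\omega_{\Psi_2,\varphi}$. I would verify it by noting $\Lambda(x)\Omega_{\Psi_1\Psi_2,\phi}=\mathcal{Y}^*(x\Omega_{\Psi_2,\varphi}\otimes_{\varphi}\Omega_{\Psi_1,\phi})$, so that, since $\mathcal{Y}$ is isometric and $L_\varphi(\Omega_{\Psi_2,\varphi})=v_{\Psi_2,\varphi}$ as in the proof of Lemma \ref{lemma:minimal},
\begin{align*}
    \omega_{\Psi_1\Psi_2,\phi}(\Lambda(x)) &=\langle x\Omega_{\Psi_2,\varphi}\otimes_{\varphi}\Omega_{\Psi_1,\phi},\,\Omega_{\Psi_2,\varphi}\otimes_{\varphi}\Omega_{\Psi_1,\phi}\rangle\\
    &=\langle\pi_{\Psi_1}(v^*_{\Psi_2,\varphi}xv_{\Psi_2,\varphi})\Omega_{\Psi_1,\phi},\Omega_{\Psi_1,\phi}\rangle.
\end{align*}
The last quantity equals $\phi(\Psi_1(v^*_{\Psi_2,\varphi}xv_{\Psi_2,\varphi}))=\varphi(v^*_{\Psi_2,\varphi}xv_{\Psi_2,\varphi})=\omega_{\Psi_2,\varphi}(x)$, precisely because $\varphi=\phi\circ\Psi_1$. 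This computation is exactly what dictates that the reference state of the right-hand term must be $\phi\circ\Psi_1$.

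With this identity in hand I would introduce the Petz adjoint $\Lambda^*:=\Lambda^*_{\omega_{\Psi_1\Psi_2,\phi}}:\End(_{\mathcal{A}}\mathcal{H}^{\Psi_1\Psi_2}_{\mathcal{C}})\to\End(_{\mathcal{A}}\mathcal{H}^{\Psi_2}_{\mathcal{B}})$, a unital completely positive map characterized by
\[
    \langle\Lambda^*(x)\xi_{\Psi_2,\varphi},J_{\Psi_2}y\xi_{\Psi_2,\varphi}\rangle=\langle x\xi_{\Psi_1\Psi_2,\phi},J_{\Psi_1\Psi_2}\Lambda(y)\xi_{\Psi_1\Psi_2,\phi}\rangle.
\]
Setting $y=1$ yields $\omega_{\Psi_2,\varphi}\circ\Lambda^*=\omega_{\Psi_1\Psi_2,\phi}$, while setting $y=h_{\Phi_2,\Psi_2}$, invoking $\Lambda(h_{\Phi_2,\Psi_2})=h_{\Psi_1\Phi_2,\Psi_1\Psi_2}$ and the channel--state correspondence \eqref{eqn:: channel-state correspondence}, yields $\omega_{\Psi_2,\varphi}(\Phi_2)\circ\Lambda^*=\omega_{\Psi_1\Psi_2,\phi}(\Psi_1\Phi_2)$. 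Then, rewriting $S_\phi(\Psi_1\Phi_2,\Psi_1\Psi_2)=S(\omega_{\Psi_1\Psi_2,\phi}(\Psi_1\Phi_2),\omega_{\Psi_1\Psi_2,\phi})$ via Lemma \ref{lemma::IndependeceOfReference} and applying monotonicity of Araki's relative entropy under $\Lambda^*$,
\begin{align*}
    S(\omega_{\Psi_1\Psi_2,\phi}(\Psi_1\Phi_2),\omega_{\Psi_1\Psi_2,\phi}) &=S(\omega_{\Psi_2,\varphi}(\Phi_2)\circ\Lambda^*,\omega_{\Psi_2,\varphi}\circ\Lambda^*)\\
    &\le S(\omega_{\Psi_2,\varphi}(\Phi_2),\omega_{\Psi_2,\varphi})=S_{\phi\circ\Psi_1}(\Phi_2,\Psi_2),
\end{align*}
which is the assertion. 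The main obstacle I expect is not any single estimate but the bookkeeping of the reference state: in contrast to the right-monotone case, where the ambient $\phi$ is untouched, here the identity $\omega_{\Psi_1\Psi_2,\phi}\circ\Lambda=\omega_{\Psi_2,\varphi}$ only closes when $\varphi=\phi\circ\Psi_1$, and one must simultaneously confirm that this $\varphi$ is a genuine faithful normal (positive) functional so that $\mathcal{H}^{\Psi_2}$, the vectors $\Omega_{\Psi_2,\varphi},\xi_{\Psi_2,\varphi}$, and the Petz adjoint are all well defined; once this is pinned down, the remainder is a faithful mirror of Theorem \ref{thm::RightMonotonicity}.
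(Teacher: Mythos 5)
Your proposal is correct and follows essentially the same route as the paper: the same map $\Lambda(y)=\mathcal{Y}^*(y\otimes_{\phi\circ\Psi_1}\mathrm{id}_{\mathcal{H}^{\Psi_1}})\mathcal{Y}$, the same state identity $\omega_{\Psi_1\Psi_2,\phi}\circ\Lambda=\omega_{\Psi_2,\phi\circ\Psi_1}$, and the same reduction via the Petz adjoint to monotonicity of Araki's relative entropy. Your explicit identification of why the reference state on the right must be $\phi\circ\Psi_1$ (and the caveat about its faithfulness) is a point the paper uses but does not spell out.
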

\begin{proof}
    The proof is similar in structure to the previous one. 
    We again adopt the setting of Proposition \ref{prop::convolution} and define the normal unital completely positive map $\Lambda: \End(_\mathcal{A}\mathcal{H}^{\Psi_2}_\mathcal{B})\rightarrow \End(_\mathcal{A}\mathcal{H}^{\Psi_1\Psi_2}_\mathcal{C})$ as
    \begin{align*}
\Lambda(y)=\mathcal{Y}^*(y\otimes_{\phi\circ\Psi_1}id_{\mathcal{H}^{\Psi_1}})\mathcal{Y}.
    \end{align*}
    In what follows, we will use the full notation $\omega_{\Psi,\varphi}$. 
    Now we check that for any $x\in \End(_\mathcal{A}\mathcal{H}^{\Psi_2}_\mathcal{B})$:
    \begin{align*}
        \omega_{\Psi_1\Psi_2,\phi}\circ\Lambda(x)
        &=\left\langle x\Omega_{\Psi_2,\phi\circ\Psi_1}\otimes_{\phi\circ\Psi_1}\Omega_{\Psi_1},\Omega_{\Psi_2,\phi\circ\Psi_1}\otimes_{\phi\circ\Psi_1}\Omega_{\Psi_1}\right\rangle\\
        &=\left\langle v^*_{\Psi_2,\phi\circ\Psi_1}yv_{\Psi_2,\phi\circ\Psi_1} \Omega_{\Psi_1},\Omega_{\Psi_1}\right\rangle\\
        &=\phi\circ\Psi_1\big(v^*_{\Psi_2, \phi\circ\Psi_1}yv_{\Psi_2,\phi\circ\Psi_1}\big)\\
        &=\omega_{\Psi_2,\phi\circ\Psi_1}(y).
    \end{align*}
    Therefore with $\Lambda^*_{\Psi_1\Psi_2,\phi}: \End(_\mathcal{A}\mathcal{H}^{\Psi_1\Psi_2}_\mathcal{C})\to \End(_\mathcal{A}\mathcal{H}^{\Psi_2}_\mathcal{B})$ being the Petz transpose of $\Lambda$ with respect to $\omega_{\Psi_1\Psi_2,\phi}$, we check that 
    \begin{align*}
        \omega_{\Psi_1\Psi_2,\phi}(\Psi_1\Phi_2)=&\omega_{\Psi_2,\phi\circ\Psi_1}(\Phi_2)\circ \Lambda^*_{\Psi_1\Psi_2,\phi},\\
        \omega_{\Psi_1\Psi_2,\phi}=&\omega_{\Psi_2,\phi\circ\Psi_1}\circ \Lambda^*_{\Psi_1\Psi_2,\phi}.
    \end{align*}
    The conclusion follows again from the monotonicity of Araki's relative entropy.
\end{proof}
\begin{corollary}[Convexity]\label{corollary:: convexity}
    Let $\{\Phi_i\}^n_{i=1}$ and $\{\Psi_i\}^n_{i=1}$ be two families of normal completely positive maps from $\mathcal{A}$ to $\mathcal{B}$. 
    Then for any faithful positive linear functional $\varphi$ on $\mathcal{B}$ and any probability mass function $\{p_i\}^n_{i=1}$, we have
    \begin{align*}
        S_{\varphi}\big(\sum^n_{i=1}p_i\Phi_i,\sum^n_{i=1}p_i\Psi_i\big)\leq \sum^n_{i=1} p_i S_{\varphi}(\Phi_i,\Psi_i). 
    \end{align*}
\end{corollary}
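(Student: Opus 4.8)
The plan is to derive convexity from the left monotonicity (Theorem \ref{thm::LeftMonotonicity}) by encoding the index $i$ in an auxiliary direct-sum algebra and then averaging it out with a unital channel, mirroring the standard derivation of joint convexity of relative entropy from data processing. Throughout I may assume all $p_i>0$, discarding zero-weight terms.

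First I would assemble the families into single maps. Set $\mathcal{B}^{\oplus n} = \bigoplus_{i=1}^n \mathcal{B}$ and define $\Phi^\oplus, \Psi^\oplus:\mathcal{A}\to\mathcal{B}^{\oplus n}$ by $\Phi^\oplus(a)=(\Phi_i(a))_{i=1}^n$ and $\Psi^\oplus(a)=(\Psi_i(a))_{i=1}^n$. These are normal completely positive, and since $\Phi_i\preccurlyeq\Psi_i$ for each $i$, taking $c$ larger than all the individual constants gives $\Phi^\oplus\preccurlyeq\Psi^\oplus$, so $S_{\tilde\varphi}(\Phi^\oplus,\Psi^\oplus)$ is defined once we equip $\mathcal{B}^{\oplus n}$ with the faithful normal functional $\tilde\varphi((b_i)_i)=\sum_i p_i\varphi(b_i)$.

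Second, and this is the technical heart, I would establish the additivity
\begin{align*}
    S_{\tilde\varphi}(\Phi^\oplus,\Psi^\oplus)=\sum_{i=1}^n p_i\, S_\varphi(\Phi_i,\Psi_i).
\end{align*}
The point is that every ingredient of Definition \ref{def:: Relative entropy for CP maps} splits along the blocks. The GNS space $L^2(\mathcal{B}^{\oplus n},\tilde\varphi)$ is the direct sum $\bigoplus_i L^2(\mathcal{B},\varphi)$ with the $i$-th summand rescaled by $p_i$, and because $\Psi^\oplus$ acts blockwise the correspondence decomposes as $\mathcal{H}^{\Psi^\oplus}\cong\bigoplus_i\mathcal{H}^{\Psi_i}$ (again with weight $p_i$). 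Hence $\End(_{\mathcal{A}}\mathcal{H}^{\Psi^\oplus}_{\mathcal{B}^{\oplus n}})$ is block diagonal, the derivative computed blockwise is $h_{\Phi^\oplus,\Psi^\oplus}=\bigoplus_i h_{\Phi_i,\Psi_i}$, and the functionals of Equation \eqref{eqn:: channel-state correspondence} become $\omega_{\Psi^\oplus,\tilde\varphi}(\Phi^\oplus)=\bigoplus_i p_i\,\omega_{\Psi_i,\varphi}(\Phi_i)$ and $\omega_{\Psi^\oplus,\tilde\varphi}=\bigoplus_i p_i\,\omega_{\Psi_i,\varphi}$. Since these two block-diagonal functionals carry the \emph{same} weights $p_i$, the weight-dependent terms in Araki's relative entropy cancel, and its additivity over direct summands yields exactly $\sum_i p_i\, S(\omega_{\Psi_i,\varphi}(\Phi_i),\omega_{\Psi_i,\varphi})=\sum_i p_i\, S_\varphi(\Phi_i,\Psi_i)$.

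Third, I would introduce the averaging channel $\mathcal{E}\in\Ch(\mathcal{B}^{\oplus n},\mathcal{B})$ given by $\mathcal{E}((b_i)_i)=\sum_i p_i b_i$; it is unital because $\sum_i p_i=1$ and completely positive, and it satisfies $\varphi\circ\mathcal{E}=\tilde\varphi$ together with $\mathcal{E}\Phi^\oplus=\sum_i p_i\Phi_i$ and $\mathcal{E}\Psi^\oplus=\sum_i p_i\Psi_i$. Applying left monotonicity (Theorem \ref{thm::LeftMonotonicity}) with $\Psi_1=\mathcal{E}$, $\Phi_2=\Phi^\oplus$ and $\Psi_2=\Psi^\oplus$ gives
\begin{align*}
    S_\varphi\big(\textstyle\sum_i p_i\Phi_i,\sum_i p_i\Psi_i\big)=S_\varphi(\mathcal{E}\Phi^\oplus,\mathcal{E}\Psi^\oplus)\leq S_{\varphi\circ\mathcal{E}}(\Phi^\oplus,\Psi^\oplus)=S_{\tilde\varphi}(\Phi^\oplus,\Psi^\oplus),
\end{align*}
and the additivity of the second step identifies the right-hand side with $\sum_i p_i\, S_\varphi(\Phi_i,\Psi_i)$, which is the claim. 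The main obstacle will be the rigorous verification of the second step: making the bimodule identification $\mathcal{H}^{\Psi^\oplus}\cong\bigoplus_i\mathcal{H}^{\Psi_i}$ with its $p_i$-weighting precise, checking that the derivatives and the representing vectors in the natural positive cone decompose compatibly, and confirming that the weight factors appearing symmetrically in both arguments of $S$ cancel so that no $\log(p_i/p_i)$ correction survives. Everything else is routine bookkeeping of the monotonicity inequality.
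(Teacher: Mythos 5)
Your proposal is correct and follows essentially the same strategy as the paper: encode the index $i$ in a direct sum, apply monotonicity against the averaging channel $\mathcal{E}_2((b_i)_i)=\sum_i p_ib_i$, and conclude by additivity of the relative entropy over the blocks of $\mathcal{B}^{\oplus n}$ with the weights $p_i$ appearing symmetrically in both arguments. The only differences are cosmetic: the paper factors the maps as $\mathcal{E}_2\mathbf{\Phi}\mathcal{E}_1$ through $\mathcal{A}^{\oplus n}$ and invokes both Theorem \ref{thm::RightMonotonicity} and Theorem \ref{thm::LeftMonotonicity}, whereas you absorb the diagonal embedding $\mathcal{E}_1$ into $\Phi^{\oplus}$ so that only left monotonicity is needed, and you make explicit the block-decomposition additivity $S_{\tilde\varphi}(\Phi^{\oplus},\Psi^{\oplus})=\sum_i p_iS_\varphi(\Phi_i,\Psi_i)$ that the paper leaves implicit in its final sentence.
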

\begin{proof}
    Without the loss of generality, we assume $p_i\neq 0$ for all $i$. 
    Define $\mathcal{E}_1\in \mathbf{UCP}(\mathcal{A},\mathcal{A}^{\oplus n})$, and $\mathcal{E}_2\in \mathbf{UCP}(\mathcal{B}^{\oplus n},\mathcal{B})$ as
    \begin{align*}
        &\mathcal{E}_1 (a) = (a)^n_{i=1},\quad  a\in\mathcal{A};\\
        &\mathcal{E}_2 \big((b_i)^n_{i=1}\big) = \sum^n_{i=1} p_ib_i,\quad  (b_i)^n_{i=1}\in \mathcal{B}^{\oplus n}.
    \end{align*}
    Meanwhile define 
    $\mathbf{\Phi},\mathbf{\Psi}\in \mathbf{CP}(\mathcal{A}^{\oplus n},\mathcal{B}^{\oplus n})$ as
    \begin{align*}
        &\mathbf{\Phi}\big((a_i)^n_{i=1}\big) = (\Phi_i(a_i))^n_{i=1},\quad (a_i)^n_{i=1}\in \mathcal{A}^{\oplus n};\\
        &\mathbf{\Psi}\big((a_i)^n_{i=1}\big) = (\Psi_i(a_i))^n_{i=1},\quad (a_i)^n_{i=1}\in \mathcal{A}^{\oplus n}.
    \end{align*}
    It is then straightforward to check that 
    \begin{align*}
        S_{\varphi}\big(\sum^n_{i=1}p_i\Phi_i,\sum^n_{i=1}p_i\Psi_i\big) = S_{\varphi} (\mathcal{E}_2\mathbf{\Phi}\mathcal{E}_1,\mathcal{E}_2\mathbf{\Psi}\mathcal{E}_1).
    \end{align*}
    By right monotonicity (Theorem \ref{thm::RightMonotonicity}), then by left monotonicity (Theorem \ref{thm::LeftMonotonicity}), we obtain
    \begin{align*}
        S_{\varphi} (\mathcal{E}_2\mathbf{\Phi}\mathcal{E}_1,\mathcal{E}_2\mathbf{\Psi}\mathcal{E}_1)\leq S_{\varphi \circ\mathcal{E}_2}(\mathbf{\Phi},\mathbf{\Psi}). 
    \end{align*}
    Since $\varphi \circ\mathcal{E}_2 = (p_i\varphi)^n_{i=1}$, the result follows. 
\end{proof}
\section{R\'{e}nyi Relative Entropies}
Given a finite von Neumann algebra $M$ with normal faithful normalized trace $\tau$, for any $1/2\leq p\leq \infty$ and any density operators $\rho,\sigma\in M$, the sandwiched R\'{e}nyi relative entropy between $\rho$ and $\sigma$ is defined as
\begin{align*}
    D_{p,\tau}(\rho\|\sigma)=\frac{1}{p-1}\log \tau_{\cM}(|\sigma^{-\frac{1}{2p'}} \rho \sigma^{-\frac{1}{2p'}}|^p).
\end{align*}
It is known that $D_{1,\tau}(\rho|\sigma)$ is the usual relative entropy, and as a function of $p$, $D_{p,\tau}(\rho\|\sigma)$ is non decreasing on $[1/2,+\infty]$. 
Moreover, it has been proved that R\'{e}nyi entropy is monotone under completly positive trace-preserving maps \cite{MDSFT2013}. 

Recall that for a finite inclusion $\cN\subseteq \cM$, the Pimsner-Popa index is defined as
\begin{align*}
    \lambda(\cM,\cN) = \sup \{\lambda>0\vert E^{\cM}_{\cN} - \lambda id_{\cM}\text{ is positive }\}.
\end{align*}
In \cite{GJL20}, Gao, Junge, and Laracuente studied relations between Pimsner-Popa index and R\'{e}nyi entropy. 
When $\cN\subset\cM$ is finite inclusion of tracial von Neumann algebras, they proved that (c.f. Proposition 3.2 in \cite{GJL20})
\begin{equation}\label{eqn:: result of Gao et.al.}
    -\log\lambda(\cM,\cN) \geq D_p(\cM|\cN)\geq H(\cM|\cN),\quad \forall p\in [1,+\infty],  
\end{equation}
where the quantity in the middle is defined as 
\begin{align*}
    D_p(\cM|\cN) = \sup_{\rho} \inf_{\sigma} D_p(\rho\|\sigma),
\end{align*}
with the supremum taken over all densities in $\cM$ and the infimum taken over all densities in $\cN$. 
Moreover, if $\cN\subset\cM$ are subfactors of type II$_1$ or finite dimensional then by Theorem 3.1 of \cite{GJL20} 
\begin{align*}
    -\log\lambda(\cM,\cN) = D_{p}(\cM|\cN),\quad \forall p\in[1/2,\infty].
\end{align*}

We consider the R\'{e}nyi relative entropy between completely positive bimodule maps. 
For a finite inclusion $\cN\subset\cM$, we define for $p\in [1/2,+\infty]$
\begin{align*}
    S_p(\Phi,\Psi) = D_{p,\tau_{\cM_2}} (\Delta^{1/2}\widehat{\Phi}\Delta^{1/2}\|\Delta^{1/2}\widehat{\Psi}\Delta^{1/2}), 
\end{align*}
where $\Phi\preccurlyeq \Psi $ are completely positive bimodule maps. 
\begin{definition}\label{def:: probabilistic constant for CP maps}
    Let $\mathcal{A}$ and $\mathcal{B}$ be von Neumann algebras. 
    For $\Phi,\Psi\in \mathbf{CP}(\mathcal{A},\mathcal{B})$ with $\Phi\preccurlyeq \Psi$, we define 
    \begin{align*}
        \lambda (\Phi,\Psi) = \sup\{\lambda>0\vert \Psi-\lambda\Phi\text{ is completely positive}\}. 
    \end{align*}
    Otherwise, set $\lambda (\Phi,\Psi) = +\infty$. 
\end{definition}
\begin{theorem}
    Let $\cN\subset\cM$ be a finite inclusion of finite von Neumann algebras. 
    Let $\Phi,\Psi\in \mathbf{CP}_{\cN}(\cM)$. 
    Then for $p\in[1,+\infty]$, 
    \begin{equation}\label{eqn:: compare to Gao et.al.}
        -\log \lambda(\Phi,\Psi) \geq S_p(\Phi,\Psi) \geq H(\Phi|\Psi).
    \end{equation}
\end{theorem}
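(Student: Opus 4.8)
The plan is to obtain \eqref{eqn:: compare to Gao et.al.} by reducing it to the two endpoints $p=1$ and $p=\infty$ of the interval $[1,\infty]$ and filling in the rest by the monotonicity of the sandwiched R\'enyi relative entropy $D_{p,\tau}$ in $p$, recalled above. Writing $\rho=\Delta^{1/2}\widehat{\Phi}\Delta^{1/2}$ and $\sigma=\Delta^{1/2}\widehat{\Psi}\Delta^{1/2}$, monotonicity gives $S_1(\Phi,\Psi)\leq S_p(\Phi,\Psi)\leq S_\infty(\Phi,\Psi)$ for all $p\in[1,\infty]$, so it is enough to establish the two endpoint facts $S_\infty(\Phi,\Psi)=-\log\lambda(\Phi,\Psi)$ and $S_1(\Phi,\Psi)\geq H(\Phi|\Psi)$. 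Note that $\Phi\preccurlyeq\Psi$ together with Corollary \ref{corollary:: comparison of Fourier multiplier} gives $\mathbf{supp}\,\rho\leq\mathbf{supp}\,\sigma$, so all the quantities below are finite and the limit $p\to\infty$ is the max-relative entropy.

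For the upper endpoint I would compute $S_\infty(\Phi,\Psi)=\lim_{p\to\infty}D_{p,\tau_{\cM_2}}(\rho\|\sigma)=\log\|\sigma^{-1/2}\rho\sigma^{-1/2}\|_\infty=\log\inf\{c>0:\rho\leq c\sigma\}$. Since $\Delta$ is positive and invertible, $\rho\leq c\sigma$ is equivalent to $\widehat{\Phi}\leq c\widehat{\Psi}$, and because the Fourier multiplier of $c\Psi-\Phi$ is $c\widehat{\Psi}-\widehat{\Phi}$, Corollary \ref{corollary:: comparison of Fourier multiplier} shows this is exactly the condition that $c\Psi-\Phi$ be completely positive. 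Hence $\inf\{c:\rho\leq c\sigma\}=1/\lambda(\Phi,\Psi)$ with $\lambda$ as in Definition \ref{def:: probabilistic constant for CP maps}, and $S_\infty(\Phi,\Psi)=-\log\lambda(\Phi,\Psi)$. The crucial feature here is that the $p=\infty$ quantity is invariant under rescaling $\rho,\sigma$ by a common positive scalar, so it sees $\widehat{\Phi},\widehat{\Psi}$ directly and matches $\lambda(\Phi,\Psi)$ without any normalization constant.

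For the lower endpoint I would use that $D_{1,\tau}$ is the ordinary relative entropy, so that $S_1(\Phi,\Psi)$ is precisely the quantity appearing in Theorems \ref{thm::popaentropycompletepositive} and \ref{thm::PimsnerPopaEntropy}. By Theorem \ref{thm::PimsnerPopaEntropy} this relative entropy equals the Araki entropy $S_{\tau_{\cM}}(\Phi,\Psi)$ of the states $\omega_{E_{\cN},\tau_{\cM}}(\Phi),\omega_{E_{\cN},\tau_{\cM}}(\Psi)$, whose densities are $\Delta^{1/2}h_{\Phi}\Delta^{1/2}$ and $\Delta^{1/2}h_{\Psi}\Delta^{1/2}$ with $h_{\Phi}=\delta\widehat{\Phi}$; combining this with Theorem \ref{thm::popaentropycompletepositive} yields $H(\Phi|\Psi)\leq S_{\tau_{\cM}}(\Phi,\Psi)=S_1(\Phi,\Psi)$. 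Chaining $-\log\lambda(\Phi,\Psi)=S_\infty\geq S_p\geq S_1\geq H(\Phi|\Psi)$ then finishes the proof.

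The step I expect to be the main obstacle is the consistent bookkeeping of the index factor $\delta$, since the two endpoints demand $\rho,\sigma$ in two different normalizations: the $p=\infty$ side is scale-free and is naturally expressed through $\widehat{\Phi},\widehat{\Psi}$, whereas the $p=1$ side must reproduce $S_{\tau_{\cM}}(\Phi,\Psi)$ and therefore requires the derivatives $h_{\Phi}=\delta\widehat{\Phi}$, i.e. the trace-normalized densities (of trace $1$ when $\Phi$ is unital). I would therefore fix at the outset that $S_p(\Phi,\Psi)$ is the sandwiched R\'enyi entropy of these normalized densities, check that this choice simultaneously gives $S_1=S_{\tau_{\cM}}$ and leaves $S_\infty$ unchanged by scale-invariance at $p=\infty$, and only then invoke monotonicity in $p$; for non-unital $\Phi,\Psi$ the same argument applies after normalizing, using homogeneity of the R\'enyi divergence under common rescaling. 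The remaining ingredients, namely monotonicity in $p$ and the max-relative-entropy limit, are standard and need no input beyond the results already established.
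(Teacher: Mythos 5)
Your proposal is correct and takes essentially the same route as the paper: it establishes the endpoint $p=\infty$ via Corollary \ref{corollary:: comparison of Fourier multiplier} and the endpoint $p=1$ via Theorem \ref{thm::popaentropycompletepositive}, then interpolates using monotonicity of the sandwiched R\'enyi divergence in $p$. Your explicit bookkeeping of the $\delta$-normalization is a welcome refinement, since the paper's displayed definition of $S_p(\Phi,\Psi)$ omits the factor $\delta$ that appears in Theorems \ref{thm::popaentropycompletepositive} and \ref{thm::PimsnerPopaEntropy} and is used silently in the proof.
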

\begin{proof}
    By Theorem \ref{thm::popaentropycompletepositive}, we have
    \begin{align*}
        H(\Phi|\Psi)\leq \delta D_{1,\tau_{\cM_2}}(\Delta^{1/2}\widehat{\Phi}\Delta^{1/2}\| \Delta^{1/2}\widehat{\Psi}\Delta^{1/2}).
    \end{align*}
    On the other hand, by definition
    \begin{align*}
        \delta D_{\infty,\tau_{\cM_2}}(\Delta^{1/2}\widehat{\Phi}\Delta^{1/2}\|\Delta^{1/2}\widehat{\Phi}\Delta^{1/2})= \log \inf\{\lambda>0\vert \lambda\widehat{\Psi}\geq \widehat{\Phi}\}.
    \end{align*}
    By Corollary \ref{corollary:: comparison of Fourier multiplier}, $\lambda\widehat{\Psi}-\widehat{\Phi}\geq 0$ if and only if $\lambda\Psi-\Phi$ is completely positive, 
    \begin{align*}
        \log \inf\{\lambda>0\vert \lambda\widehat{\Psi}\geq \widehat{\Phi}\} = -\log \lambda(\Phi,\Psi).
    \end{align*}
    So the result follows from that $D_{p,\tau_{\cM_2}}(\cdot|\cdot)$ is non decreasing with respect to $p$ in the interval $[1,\infty]$
\end{proof}
Let us take $\Phi = id_{\cM}$ and $\Psi = E_{\cN}$ as an example. 
When $\cN\subset\cM$ admits a downward Jones basic construction, for instance when $\cN\subset\cM$ is a subfactor, we have 
\begin{align*}
    S_{\infty}(id_{\cM},E_{\cN})= -\log\lambda(\cM,\cN) = \log [\cM:\cN],
\end{align*}
by Proposition \ref{prop:: norm of Fourier multiplier}, as well as 
\begin{align*}
    S_1(id_{\cM},E_{\cN}) = H(\cM|\cN)
\end{align*}
by Theorem \ref{thm::Formula for entropy between bimodule CP maps}. 
Thus in this case both bounds in Equation \eqref{eqn:: compare to Gao et.al.} are tight. 
Therefore the R\'{e}nyi relative entropy $S_p(id_{\cM},E_{\cN})$ interpolates between Pimsner-Popa index and Connes-S\o rmer relative entropy. 
This suggests that $S_p(\Phi,\Psi)$ is a more natural entropic quantity for completely positive maps. 

Now assuming $\cM$ is a finite factor, we can further compute, similar to the proof of Proposition \ref{prop:: explicit upperbound}, that
\begin{align*}
    S_{1/2}(id_{\cM},E_{\cN}) = 2\log \delta -\log \tau_{\cM}(\Delta^{-1}_0).
\end{align*}
We notice that if $\cN\subset\cM$ admits downward Jones basic construction, then 
\begin{align*}
     H(\cM|\cN) = S_1(id_{\cM},E_{\cN}) \geq S_{1/2}(id_{\cM},E_{\cN}),
\end{align*}
since the sandwiched R\'{e}nyi relative entropy does not decrease as the parameter increases. 
When the inclusion doesn't admit downward Jones basic construction, the reversed inequality can occur. 
For instance if $\cN = \bigoplus_{k\in K} M_{n_k}(\mathbb{C})$ and $\cM = M_{m}(\mathbb{C})$ with $m = \sum_{k\in K}a_kn_k$, then 
\begin{align*}
    S_{1}(id_{\cM},E_{\cN}) - S_{1/2}(id_{\cM},E_{\cN}) = \sum_{k\in K} \frac{n_ka_k}{m}\log \frac{a_k}{n_k}.
\end{align*}
Compare to 
\begin{align*}
    S_{1}(id_{\cM},E_{\cN}) - H(\cM|\cN) = \sum_{k\in K}\frac{n_ka_k}{m}\log\max\left\{\frac{a_k}{n_k},1\right\},
\end{align*}
we see that $S_{1/2}(id_{\cM},E_{\cN})>H(\cM|\cN)$ if and only if $a_k<n_k$ for some $k\in K$. 
Thus the sign of the difference $H(\cM|\cN)-S_{1/2}(id_{\cM},E_{\cN})$ can be treated as a criterion for the existence of downward Jones basic construction. 
\bibliography{Mainv4.bbl}

\begin{thebibliography}{10}

\bibitem{Ocneanu1989}
A.Ocneanu.
\newblock {\em Quantized groups, string algebras, and Galois theory for algebras}, volume~2 of {\em London Mathematical Society Lecture Note Series}, pages 119--172.
\newblock Cambridge University Press, 1989.

\bibitem{Araki1977}
H.~Araki.
\newblock {R}elative entropy of states of von {N}eumann algebras {II}.
\newblock {\em Publications of The Research Institute for Mathematical Sciences}, 13:173--192, 1977.

\bibitem{Bisch1997}
D.~Bisch.
\newblock Bimodules, higher relative commutants and the fusion algebra associated to a subfactor.
\newblock {\em The Fields Institute for Research in Mathematical Sciences Communications Series}, 13, 1997.

\bibitem{Burns2011}
M.~Burns.
\newblock {\em Subfactors, Planar Algebras and Rotations}.
\newblock Phd thesis, University of California, Berkeley, May 2003.
\newblock Available at \url{https://arxiv.org/abs/1111.1362}.

\bibitem{Connes1994}
A.~Connes.
\newblock {\em {N}oncommutative {G}eometry}.
\newblock Academic Press, 1994.

\bibitem{CS1975}
A.~Connes and E.~St{\o}rmer.
\newblock {E}ntropy for automorphisms of {II}1 von {N}eumann algebras.
\newblock {\em Acta Mathematica}, 134:289 -- 306, 1975.

\bibitem{HJJLW23}
L.~Huang, A.~Jaffe, C.~Jiang, Z.~Liu, and J.~Wu.
\newblock Phase groups of bimodule quantum channels, 2023.

\bibitem{HJLW23}
L.~Huang, A.~Jaffe, Z.~Liu, and J.~Wu.
\newblock The quantum perron-frobenius space, 2023.

\bibitem{HLW2022}
L.~Huang, Z.~Liu, and J.~Wu.
\newblock Quantum convolution inequalities on frobenius von neumann algebras, 2022.

\bibitem{JJLRW2020}
A.~Jaffe, C.~Jiang, Z.~Liu, Y.~Ren, and J.~Wu.
\newblock Quantum fourier analysis.
\newblock {\em Proceedings of the National Academy of Sciences}, 117(20):10715--10720, 2020.

\bibitem{JLW2016}
C.~Jiang, Z.~Liu, and J.~Wu.
\newblock Noncommutative uncertainty principles.
\newblock {\em Journal of Functional Analysis}, 270(1):264--311, 2016.

\bibitem{Jones1983}
V.~Jones.
\newblock {I}ndex for subfactors.
\newblock {\em Inventiones mathematicae}, 72:1--26, 1983.

\bibitem{GJL20}
L.Gao, M.Junge, and N.~LaRacuente.
\newblock Relative entropy for von neumann subalgebras.
\newblock {\em International Journal of Mathematics}, 31(06):2050046, may 2020.

\bibitem{Longo1989}
R.~Longo.
\newblock {Index of subfactors and statistics of quantum fields. I}.
\newblock {\em Communications in Mathematical Physics}, 126(2):217 -- 247, 1989.

\bibitem{MDSFT2013}
M.Müller-Lennert, F.Dupuis, O.Szehr, S.Fehr, and M.Tomamichel.
\newblock {On quantum Rényi entropies: A new generalization and some properties}.
\newblock {\em Journal of Mathematical Physics}, 54(12):122203, 12 2013.

\bibitem{Petz1988}
D.~Petz.
\newblock {S}ufficiency of channels over von {N}eumann algebras.
\newblock {\em Quarterly Journal of Mathematics}, 39:97--108, 1988.

\bibitem{PP1986}
M.~Pimsner and S.~Popa.
\newblock {E}ntropy and {I}ndex for {S}ubfactors.
\newblock {\em Annales Scientifiques De L Ecole Normale Superieure}, 1986.

\bibitem{PP1988}
M.~Pimsner and S.~Popa.
\newblock {I}terating the basic construction.
\newblock {\em Transactions of the American Mathematical Society}, 1988.

\bibitem{Popa1986}
S.~Popa.
\newblock {C}orrespondences.
\newblock Available at \url{https://www.math.ucla.edu/~popa/popa-correspondences.pdf}, 1986.

\bibitem{Popa1999}
S.~Popa.
\newblock {T}he relative {D}ixmier property for inclusions of von {N}eumann algebras of finite index.
\newblock {\em Annales scientifiques de l'École Normale Supérieure}, 32(6):743--767, 1999.

\bibitem{Popa2010}
S.~Popa and C.~Anantharaman.
\newblock {A}n introduction to {II}1 factors.
\newblock Available at \url{https://www.math.ucla.edu/~popa/Books/IIunV15.pdf}, 2010.

\bibitem{Takesaki2003}
M.~Takesaki.
\newblock {\em {T}heory of {O}perator {A}lgebras {II}}.
\newblock Springer Berlin, Heidelberg, 2003.

\bibitem{Ume62}
H.~Umegaki.
\newblock Conditional expectations in an operator algebra iv (entropy and information).
\newblock {\em Kodai Mathematical Seminar Reports}, 14(2):59--85, 1988.

\end{thebibliography}
\end{document}